\documentclass[12pt]{amsart}
\usepackage{amsmath}
\usepackage{amstext}
\usepackage{amsfonts}
\usepackage{amssymb}
\usepackage{amsthm}
\usepackage{amsrefs}

\usepackage{microtype}
\usepackage{color,hyperref}
\definecolor{darkblue}{rgb}{0.0,0.0,0.3}
\hypersetup{colorlinks,breaklinks,
            linkcolor=red,urlcolor=red,
            anchorcolor=red,citecolor=red}

\theoremstyle{plain}
\newtheorem{thm}{Theorem}[section]

\newtheorem{cor}[thm]{Corollary}
\newtheorem{prop}[thm]{Proposition}
\newtheorem{lem}[thm]{Lemma}

\theoremstyle{definition}
\newtheorem{defn}[thm]{Definition}
\newtheorem{example}[thm]{Example}
\newtheorem{rem}[thm]{Remark}

\newtheorem{problem}[thm]{Problem}



\newcommand{\bC}{{\mathbb{C}}}
\newcommand{\bN}{{\mathbb{N}}}

\newcommand{\A}{{\mathcal{A}}}
\newcommand{\B}{{\mathcal{B}}}
\newcommand{\D}{{\mathcal{D}}}

\renewcommand{\H}{{\mathcal{H}}}
\newcommand{\K}{{\mathcal{K}}}

\newcommand{\M}{{\mathcal{M}}}
\renewcommand{\O}{{\mathcal{O}}}

\newcommand{\U}{{\mathcal{U}}}



\renewcommand{\phi}{\varphi}

\newcommand{\fA}{{\mathfrak{A}}}
\newcommand{\fB}{{\mathfrak{B}}}
\newcommand{\fM}{{\mathfrak{M}}}
\newcommand{\fN}{{\mathfrak{N}}}

\newcommand{\qand}{\quad\text{and}\quad}

\newcommand{\AND}{\text{ and }}

\newcommand{\cconv}{\overline{\operatorname{conv}}}
\newcommand{\diag}{\operatorname{diag}}

\newcommand{\ca}{\mathrm{C}^*}

\newcommand{\II}[1]{II$_1$}

\begin{document}
\title[The Schur-Horn problem for normal operators]{The Schur-Horn problem for normal operators}

\author[M. Kennedy]{Matthew Kennedy}
\address{Department of Mathematics and Statistics\\ Carleton University\\
Ottawa, ON \; K1S 5B6 \\Canada}
\email{mkennedy@math.carleton.ca}

\author[P. Skoufranis]{Paul Skoufranis}
\address{Department of Mathematics\\Texas A\&M University\\
College Station, Texas\\77843\\USA}
\email{pskoufra@math.tamu.edu}

\begin{abstract}
We consider the Schur-Horn problem for normal operators in von Neumann algebras, which is the problem of characterizing the possible diagonal values of a given normal operator based on its spectral data. For normal matrices, this problem is well-known to be extremely difficult, and in fact, it remains open for matrices of size greater than $3$. We show that the infinite dimensional version of this problem is more tractable, and establish approximate solutions for normal operators in von Neumann factors of type I$_\infty$, II and III. A key result is an approximation theorem that can be seen as an approximate multivariate analogue of Kadison's Carpenter Theorem.
\end{abstract}

\subjclass[2010]{Primary 46L10; Secondary 15A42}
\keywords{Schur-Horn, von Neumann algebra, MASA, eigenvalues, diagonal, conditional expectation, normal operator}
\thanks{First author partially supported by research grant from NSERC (Canada).}
\thanks{Second author partially supported by research grant from the NSF (USA)}
\maketitle

\section{Introduction}

In this paper we investigate the Schur-Horn problem for normal operators in von Neumann algebras. The general version of the Schur-Horn problem is to characterize the possible diagonal values of a given operator based on its spectral data. Since the appropriate notion of diagonal for an operator in a von Neumann algebra is the conditional expectation of the operator onto a maximal abelian self-adjoint subalgebra (MASA), the Schur-Horn problem can be formulated in the following way.

\begin{problem}[Schur-Horn] \label{problem:normal-schur-horn}
Let $T$ be an operator in a von Neumann algebra $\fM$, and let $\A$ be a MASA  in $\fM$ with corresponding conditional expectation $E_\A : \fM \to \A$. Determine the elements of the set
\begin{equation} \label{problem:normal-schur-horn:eq-1}
\D_{\A}(T) := \{ E_\A(U^*TU) \mid U \ \text{a unitary in}\ \fM\}.
\end{equation}
\end{problem}

The classical theorem of Schur \cite{S1923} and Horn \cite{H1954} completely solves the Schur-Horn problem for self-adjoint matrices. A great deal of effort has  gone into finding generalizations of this result for self-adjoint operators in infinite-dimensional von Neumann algebras. We are particularly interested in the body of work inspired by the recent papers of Neumann \cite{N1999} and Arveson and Kadison \cite{AK2006}.

For von Neumann factors of type \II1, the work contained in \cites{AM2007, AM2009, BR2014, DFHS2012} culminates in Ravichandran's Schur-Horn theorem for self-adjoint operators \cite{R2012}, which settles an open problem from \cite{AK2006}. For the state-of-the-art results for von Neumann factors of type I$_\infty$, we direct the reader to the recent work of Kaftal and Weiss \cites{KW2008,KW2010}, Jasper \cite{J2013}, and Bownik and Jasper \cites{BJ2013, BJ2014}.

The Schur-Horn problem for normal operators is seemingly substantially more difficult than the corresponding problem for self-adjoint operators. Indeed, for normal matrices the problem is known to be equivalent to a difficult problem about the geometry of orthostochastic matrices (cf. \cite{A2007}), and little is known about the latter problem for matrices of size greater than $3$ (for matrices of size 3, see \cite{AP1979}).

Surprisingly, the Schur-Horn problem for normal operators is more tractable in infinite dimensions. In this paper, we establish approximate Schur-Horn type theorems for normal operators in infinite dimensional von Neumann algebras. Specifically, we establish a characterization of the norm closure $\overline{\D_\A(N)}^{\left\|\cdot\right\|}$ of the set $\D_\A(N)$ in (\ref{problem:normal-schur-horn:eq-1}) when $N$ is a normal operator (satisfying certain assumptions, depending on the context) in a von Neumann factor $\fM$ of type I$_\infty$, II or III.

It is not difficult to establish (cf. Lemma \ref{lem:spectrum-of-expectation-in-convex-hull}) that a necessary condition for an operator $A$ to belong to $\overline{\D_\A(N)}^{\|\cdot\|}$ is that the spectrum $\sigma(A)$ of $A$ belongs to the closed convex hull $\cconv(\sigma(N))$ of the spectrum of $N$. If $\fM$ is a factor of type I$_\infty$, then this condition turns out to be sufficient when $\A$ is a continuous MASA. When $\A$ is the discrete MASA, then we require an additional assumption on the essential spectrum of $N$ to circumvent the difficulties that arise for matrices.

\begin{thm}
\label{thm:SH-BH-Discrete-Continuous}
Let $\A$ be a MASA in $\B(\H)$ for an infinite-dimensional, separable Hilbert space $\H$ and let $N$ be a normal operator in $\B(\H)$. 
If $\A$ is continuous, then
\[
\overline{\D_{\A}(N)}^{\left\|\cdot\right\|} =  \{A \in \A \mid \sigma(A)  \subseteq \cconv(\sigma_e(N))\}.
\]
If $\A$ is discrete, then the above equality also holds provided that $\sigma(N) \subseteq \cconv(\sigma_e(N))$, where $\sigma_e(N)$ denotes the essential spectrum of $N$.
\end{thm}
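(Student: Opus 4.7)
\emph{Proof plan.} The theorem has two inclusions. For necessity $\overline{\D_\A(N)}^{\|\cdot\|} \subseteq \{A \in \A : \sigma(A) \subseteq \cconv(\sigma_e(N))\}$, I would invoke Lemma \ref{lem:spectrum-of-expectation-in-convex-hull} to obtain $\sigma(A) \subseteq \cconv(\sigma(N))$ for every $A$ in the closure. In the discrete MASA case the hypothesis $\sigma(N) \subseteq \cconv(\sigma_e(N))$ makes the conclusion immediate. In the continuous MASA case one refines from $\cconv(\sigma(N))$ to $\cconv(\sigma_e(N))$ by observing that a continuous MASA has no minimal projections, so any norm-limit $A$ of conditional expectations of normal operators is free of eigenvalues of finite multiplicity; consequently the isolated eigenvalues of $N$ of finite multiplicity (the points of $\sigma(N) \setminus \sigma_e(N)$) cannot contribute to the spectrum of $A$, and the sharper bound follows.

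For sufficiency I would proceed in three stages. Stage~1 is a reduction via the Weyl--von Neumann--Berg theorem: given $\ep > 0$, after unitary conjugation and a compact perturbation of norm $< \ep/3$, replace $N$ by a diagonal operator $D = \sum_n \lambda_n |e_n\rangle \langle e_n|$ where $(\lambda_n)$ lists a countable dense subset of $\sigma_e(N)$ with each such point appearing infinitely often. The task becomes the construction of a unitary $U \in \B(\H)$ with $\|E_\A(U^* D U) - A\| < \ep/2$. Stage~2 records $A$ in a form compatible with $\A$. In the discrete case $A = \sum_n a_n p_n$ with $\{p_n\}$ the minimal projections of $\A$ and $a_n \in \cconv(\sigma_e(N))$. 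In the continuous case, identify $\A \simeq L^\infty(X,\mu)$, write $A = M_f$, and approximate $f$ uniformly by a simple function $\sum_k c_k \chi_{X_k}$ with $c_k \in \cconv(\sigma_e(N))$. In either case, Carathéodory plus rational approximation allows every target value $c$ to be written as $c = \sum_{j=1}^3 t_j \mu_j$ with $\mu_j \in \sigma_e(N)$ and rational weights $t_j \ge 0$ summing to one.

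Stage~3 constructs $U$ block-diagonally. Partition the indexing space of $\A$ (the atoms of a refinement of $\{X_k\}$ in the continuous case) into finite blocks whose sizes realize the rational weights of Stage~2 as exact proportions, and take $U = \bigoplus_m V_m$. On each block the task is the following finite-dimensional approximate problem: choose $V_m$ so that the diagonal of $V_m^* D|_{B_m} V_m$ matches the prescribed target vector on that block up to $\ep$, using the freedom to select the eigenvalues of $D|_{B_m}$ from the infinitely-repeated pool $(\lambda_n)$. The building blocks are Hadamard- or DFT-type rotations, which convert a diagonal matrix of prescribed trace-average into one with prescribed constant diagonal; iterating over the Carathéodory decompositions across sub-blocks delivers general prescribed diagonals. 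This step is the approximate multivariate analogue of Kadison's Carpenter theorem advertised in the abstract. Assembling the blocks yields $\|E_\A(U^* D U) - A\| < \ep/2$ and completes the proof.

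The main obstacle is Stage~3: the exact Schur--Horn problem for normal matrices is unsolved beyond size $3$, which is exactly why only an approximate conclusion is possible here. What makes the approximate construction succeed is the freedom, secured by Stage~1, to choose block eigenvalues from an infinitely-repeated dense subset of $\sigma_e(N)$; this flexibility is the essence of the approximate multivariate Carpenter theorem, and the rest of the proof is a careful bookkeeping of errors so that the three stages combine to within $\ep$ of the target.
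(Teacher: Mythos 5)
Your plan shares the overall architecture of the paper's proof: reduce to normal operators with finite spectrum, record the target diagonal as a convex combination over a partition of unity, and then invoke an approximate multivariate Carpenter theorem built out of DFT-type constant-diagonal rotations (the paper's Theorem~\ref{thm:matrix-constant-diagonal} feeding into Lemma~\ref{lem:matrix-diagonal-one-different} and Theorems~\ref{thm:approx-carpenter-BH},~\ref{thm:continuous-masa-approx-carpenter}). However, two steps in your sketch have genuine gaps.

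First, the necessity argument for the continuous MASA does not go through as written. You observe correctly that a continuous MASA has no minimal projections, so no $A\in\A$ has a finite-multiplicity eigenvalue; but this says nothing about whether $\sigma(A)$ meets the region $\cconv(\sigma(N))\setminus\cconv(\sigma_e(N))$. If $N$ has an isolated eigenvalue $\lambda$ of finite multiplicity at positive distance from $\cconv(\sigma_e(N))$, nothing in your argument prevents $\lambda$ from appearing in $\sigma(A)$ as part of $A$'s essential (or infinite-multiplicity) spectrum. The correct mechanism is Lemma~\ref{lem:continuous-masa-expectation-anhilates-compact}: for a continuous MASA, $E_\A$ annihilates every compact operator, so $E_\A$ descends to a unital positive map $\B(\H)/\K\to\A$, and applying Lemma~\ref{lem:spectrum-of-expectation-in-convex-hull} at the Calkin level gives $\sigma(A)\subseteq\cconv(\sigma_e(N))$ directly.

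Second, Stage~1 of your sufficiency argument misstates what Weyl--von~Neumann--Berg buys you. You propose replacing $N$, after unitary conjugation and a compact perturbation of norm $<\ep/3$, by a diagonal $D$ whose diagonal entries are all taken from $\sigma_e(N)$; that is, $\sigma(D)=\sigma_e(D)=\sigma_e(N)$. This is impossible in general: if $\lambda\in\sigma(N)\setminus\sigma_e(N)$ is an isolated eigenvalue of finite multiplicity at distance $d>0$ from $\sigma_e(N)$, then any $D$ with $\sigma(D)\subseteq\sigma_e(N)$ satisfies $\|N-D\|\geq d$, so the compact perturbation cannot be made small. Under the discrete-MASA hypothesis one does have $\lambda\in\cconv(\sigma_e(N))$, but $\lambda$ still persists in $D$ with finite multiplicity and must be dealt with explicitly; the paper does this in Proposition~\ref{prop:approx-SH-BH-convex-essential} by separating the finitely many finite-rank pieces and constructing the requisite unitaries on each one. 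In the continuous-MASA case you \emph{can} simply discard the finite-rank part of $N$ --- not because Weyl--von~Neumann--Berg permits it, but because $E_\A$ kills compacts (Lemma~\ref{lem:continuous-masa-expectation-anhilates-compact} again), so $\D_\A(N)$ is unchanged by any compact perturbation of $N$. That justification should be made explicit.

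Finally, Stage~3 invokes the approximate multivariate Carpenter theorem essentially as a black box, gesturing at ``iterating over Carath\'eodory decompositions across sub-blocks.'' That iteration is where the real work lies (the paper's Lemma~\ref{lem:matrix-diagonal-one-different} and the perturbation arguments in Theorem~\ref{thm:approx-carpenter-BH}); in a full proof you would need to supply the recursive block construction that makes the varying Carath\'eodory weights and eigenvalue pools compatible across blocks, and to verify that the resulting projections satisfy $\sigma(P_k)=\sigma_e(P_k)$ so that the constructed model operator is approximately unitarily equivalent to the target.
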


When $\fM$ is a II$_1$ factor and $N$ contains precisely three non-collinear points, then an analogous result holds. Perhaps surprisingly, it turns out (cf. Example \ref{ex:four-noncollinear-points}) that this is the best result possible in this setting.

\begin{thm}
\label{thm:three-point-spectrum-SH-II-1}
Let $\fM$ be a type II$_1$ factor with faithful normal tracial state $\tau$, and let $\A$ be a MASA in $\fM$. Let $N \in \fM$ be a normal operator such that $\sigma(N)$ contains precisely three non-collinear points.  Then
\[
\overline{\D_\A(N)}^{\|\cdot\|} = \{A \in \mathcal{A} \mid \tau(A) = \tau(N), \sigma(A) \subseteq \cconv(\sigma(N))\}.
\]
\end{thm}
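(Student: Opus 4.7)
The containment $\overline{\D_\A(N)}^{\|\cdot\|} \subseteq \{A \in \A : \tau(A) = \tau(N),\ \sigma(A) \subseteq \cconv(\sigma(N))\}$ is immediate: the right-hand side is norm-closed, $E_\A$ is trace-preserving on a II$_1$ factor (so $\tau(E_\A(U^*NU)) = \tau(N)$), and Lemma~\ref{lem:spectrum-of-expectation-in-convex-hull} together with the fact that the numerical range of a normal operator coincides with $\cconv(\sigma(N))$ yields the spectral constraint.

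For the non-trivial direction, the idea is to exploit the affine independence of three non-collinear points. Write $\sigma(N) = \{\lambda_1, \lambda_2, \lambda_3\}$ and $N = \sum_{i=1}^3 \lambda_i P_i$ with spectral projections $P_i$ satisfying $\tau(P_i) = s_i$. Since the $\lambda_i$ are non-collinear, the convex hull $T := \cconv(\sigma(N))$ is a non-degenerate triangle, and there exist continuous functions $f_1, f_2, f_3 \colon T \to [0,1]$ with $\sum_i f_i \equiv 1$ and $\sum_i \lambda_i f_i(z) = z$ (the barycentric coordinates). For any $A \in \A$ with $\sigma(A) \subseteq T$, continuous functional calculus produces $A_i := f_i(A) \in \A$ satisfying $A_i \geq 0$, $\sum_i A_i = 1$, and $\sum_i \lambda_i A_i = A$. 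Applying $\tau$ and using the hypothesis $\tau(A) = \tau(N)$, the affine independence of $\{\lambda_i\}$ forces $\tau(A_i) = s_i$ for each $i$.

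The problem now reduces to finding, for each $\ep > 0$, a unitary $U \in \fM$ with $\|E_\A(U^*P_i U) - A_i\| < \ep$ for $i=1,2,3$, since then $\|E_\A(U^*NU) - A\| \le (|\lambda_1|+|\lambda_2|+|\lambda_3|)\ep$. To produce such $U$, I would invoke the approximate multivariate Kadison--Carpenter theorem announced in the abstract: it should yield pairwise orthogonal projections $Q_1, Q_2, Q_3 \in \fM$ summing to $1$ with $\tau(Q_i) = s_i$ and $\|E_\A(Q_i) - A_i\| < \ep$; since in a II$_1$ factor any two partitions of the identity into three projections of prescribed matching traces are simultaneously unitarily equivalent, one may take $U$ with $Q_i = U^*P_iU$. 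The main obstacle is therefore the multivariate Carpenter theorem itself, which must be established separately. Non-collinearity enters precisely in guaranteeing continuity of the $f_i$ and in pinning down the individual traces $\tau(A_i)$; this is exactly what fails when four or more extreme points appear, as exploited by Example~\ref{ex:four-noncollinear-points}.
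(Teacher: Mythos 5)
Your proof is correct, and it takes a genuinely cleaner route than the paper's. The paper does not decompose $A$ via barycentric coordinates: it first normalizes $\sigma(N)=\{0,1,z\}$ with $\operatorname{Im}z>0$, approximates $A$ by a finite spectral combination $\sum_j\alpha_jP_j$ with $\alpha_j\in\operatorname{conv}(\sigma(N))$ and the trace preserved, writes each $\alpha_j$ as a convex combination $\gamma_{j,0}\cdot 0+\gamma_{j,1}\cdot 1+\gamma_{j,z}z$, and uses the equality $\tau(N)=\sum_j\alpha_j\tau(P_j)$ together with non-collinearity (separating real and imaginary parts) to show the aggregated weights $\sum_j\gamma_{j,k}\tau(P_j)$ match the spectral distribution of $N$. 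It then constructs a copy $N'$ of $N$ inside $\A$ by refining the $P_j$'s, finds a unitary $U$ with $N'=U^*NU$, and finishes corner-by-corner via Theorem~\ref{thm:trace-II-1} (the statement that $\tau(M)I_\fM\in\overline{\D_\A(M)}^{\|\cdot\|}$) applied in each $P_j\fM P_j$. You instead use continuous functional calculus with the barycentric coordinate functions $f_i$ to obtain the partition of unity $\{A_i=f_i(A)\}$ exactly, with no preliminary discretization of $A$, then invoke Theorem~\ref{thm:multivariate-carpenter} directly together with the standard fact that two partitions of $I$ into projections of matching traces in a II$_1$ factor are simultaneously unitarily equivalent. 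Both arguments rest on the multivariate Carpenter theorem (the paper's Theorem~\ref{thm:trace-II-1} is itself a consequence of it via Theorem~\ref{thm:schur-horn-II-1}), and both use non-collinearity for the same purpose --- to extract the individual traces $\tau(A_i)=\tau(\chi_{\{\lambda_i\}}(N))$ from the single constraint $\tau(A)=\tau(N)$ --- but your barycentric decomposition is more direct, makes the role of affine independence transparent, and avoids the auxiliary finite-spectrum approximation of $A$ and the intermediate construction of $N'$ inside $\A$.
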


We also establish similar results for normal operators in von Neumann factors of type II$_\infty$ and III, and for normal operators in Cuntz C*-algebras with two generators.

The key idea behind the above results is a theorem that can be seen as an approximate multivariate analogue of Kadison's Carpenter Theorem (cf. \cites{K2002-1,K2002-2}).

\begin{thm}[Approximate Multivariate Carpenter Theorem] \label{thm:multivariate-carpenter}
Let $\fM$ be a von Neumann factor of type I$_\infty$, II or III, and let $\A$ be a MASA in $\fM$ with corresponding conditional expectation $E_\A : \fM \to \A$. Let $\{A_k\}_{k=1}^n$ be positive elements in $\A$ such that $\sum_{k=1}^n A_k = I_{\fM}$. 
Suppose that 
\begin{itemize}
\item $\fM$ is separable if $\fM$ is of type I$_\infty$, and
\item $E_\A$ is normal except possibly when $\fM$ is type I$_\infty$ and $\A$ is continuous.
\end{itemize}
Then for every $\epsilon > 0$ there are pairwise orthogonal projections $\{P_k\}_{k=1}^n$ in $\fM$ such that
\begin{enumerate}
\item $\sum_{k=1}^n P_k = I_{\fM}$,
\item  $\|A_k - E_\A(P_k)\| < \epsilon$,
\item if $\fM$ is of type I$_\infty$, then $\sigma(P_k)= \sigma_e(P_k)$, and
\item  if $\fM$ is of type II, then $\tau(A_K) = \tau(P_k)$.
\end{enumerate}
\end{thm}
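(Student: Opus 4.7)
The plan is to reduce the problem to a scalar-coefficient multivariate Carpenter problem via a spectral decomposition of $\A$, solve that scalar problem corner-by-corner, and then reassemble.

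\emph{Spectral reduction.} Because $\A$ is commutative and each $A_k$ is positive with $\sum_k A_k = I_\fM$, I would use the spectral theorem inside $\A$ to produce a common finite partition of unity $\{Q_j\}_{j=1}^m \subset \A$ and nonnegative scalars $\lambda_{k,j}$ with $\sum_k \lambda_{k,j} = 1$ for each $j$ and
\[
\Bigl\|A_k - \sum_{j=1}^m \lambda_{k,j} Q_j\Bigr\| < \tfrac{\epsilon}{2} \qforal k.
\]
In the type II case I would further arrange, by a small perturbation and refinement, that $\sum_j \lambda_{k,j}\tau(Q_j) = \tau(A_k)$ exactly. In the type I$_\infty$ case I would insist that every $Q_j$ have infinite rank, which in the discrete-MASA case amounts to grouping the atoms of $\A$ into finitely many infinite blocks, and in the continuous-MASA case is automatic. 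This ensures every corner $\fM_j := Q_j \fM Q_j$ is itself an infinite factor of the same type as $\fM$, with MASA $Q_j\A$ and conditional expectation given by the restriction of $E_\A$.

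\emph{Scalar case in each corner.} In each $\fM_j$ the task reduces to producing pairwise orthogonal projections $R_{1,j}, \ldots, R_{n,j}$ summing to $Q_j$ with $\|E_\A(R_{k,j}) - \lambda_{k,j} Q_j\| < \epsilon/(2m)$, and with $\tau(R_{k,j}) = \lambda_{k,j}\tau(Q_j)$ in the type II case. I would begin with any pairwise orthogonal projections of the correct ``sizes'' (available by the trace in type II, and by infinite divisibility in type III and I$_\infty$), then conjugate them simultaneously by a single unitary $U \in \fM_j$ so that every $E_\A(U R_{k,j}^{(0)} U^*)$ lies within $\epsilon/(2m)$ of $\lambda_{k,j}Q_j$. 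The existence of such a $U$ follows from a Dixmier-style averaging argument in $\fM_j$: convex combinations of unitary conjugates approximate the scalar targets under $E_\A$, and one passes from a convex combination to a single conjugate by a doubling trick performed inside $\fM_j \otimes M_N$. In the type I$_\infty$ continuous-MASA case, where $E_\A$ is not assumed normal, I would instead construct the $R_{k,j}$ directly from the $L^\infty([0,1])$-model of $Q_j\A$ by an explicit measurable selection, avoiding any reliance on weak-$*$ compactness.

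\emph{Assembly and main obstacle.} Setting $P_k := \sum_j R_{k,j}$ yields pairwise orthogonal projections in $\fM$ summing to $I_\fM$ with $\|E_\A(P_k) - A_k\| < \epsilon$ by the triangle inequality; the trace condition in type II is automatic from the choice of $R_{k,j}$, and the condition $\sigma(P_k) = \sigma_e(P_k)$ in type I$_\infty$ follows because each $R_{k,j}$ is either zero or of infinite rank within the infinite corner $\fM_j$, so each $P_k$ and each $I - P_k$ is either zero or of infinite rank. The principal obstacle is the scalar step: producing a \emph{single} unitary conjugate that makes $n$ expectations \emph{simultaneously} close to their scalar targets. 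This is routine in properly infinite factors via the doubling trick, but in a type II$_1$ factor the doubling requires a careful amplification and re-embedding of $\fM_j$, and in the non-normal continuous-MASA type I$_\infty$ setting one loses weak-$*$ compactness and must work directly with the measurable structure of the MASA.
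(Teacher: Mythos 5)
Your high-level strategy --- reduce to a finite partition of $\A$ on which the $A_k$ are approximately constant, solve a scalar problem on each corner, and reassemble --- is exactly the skeleton of the paper's argument, and your treatment of the type III, II$_\infty$, and continuous-MASA I$_\infty$ cases is essentially sound. However, there are two places where the proposal glosses over the real technical content of the theorem.

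The first and most serious gap is in the discrete-MASA I$_\infty$ case. You write that insisting the spectral blocks $Q_j$ have infinite rank ``amounts to grouping the atoms of $\A$ into finitely many infinite blocks.'' This does not work: the spectral partition of the $A_k$ may, and generically will, contain finite-rank pieces on which the $A_k$ take values far from their essential-spectrum values (e.g.\ $A_1 = \diag(\tfrac12,1,1,\ldots)$, $A_2 = \diag(\tfrac12,0,0,\ldots)$). Grouping the finite-rank atom with an infinite block destroys the approximate constancy of $A_k$ on the block, so there is no nearby scalar target $\lambda_{k,j}$. This finite-rank ``defect'' is the actual obstruction in this case, and it is precisely what the paper's Lemma~\ref{lem:matrix-diagonal-one-different} is built to handle: after perturbing so that all but finitely many diagonal entries lie in the essential spectrum, one must construct projections whose diagonal has the form $\diag(\alpha_k,\beta_k,\beta_k,\ldots)$ --- one anomalous entry plus an infinite scalar tail --- by interleaving a finite block with infinitely many scalar blocks and applying the Fourier-matrix diagonalization (Theorem~\ref{thm:matrix-constant-diagonal}) twice. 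Your proposal contains no analogue of this step.

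The second gap is in the scalar step itself. You invoke ``a Dixmier-style averaging argument'' followed by ``a doubling trick performed inside $\fM_j \otimes M_N$'' to pass from a convex combination of conjugates to a single conjugate. Dixmier averaging does give convex combinations $\sum c_i U_i^* P U_i$ close to $\tau(P) I$, and for rational $c_i = 1/N$ one can amplify to $\fM_j \otimes M_N$ and conjugate by $I \otimes F$ (the Fourier matrix) to flatten the $M_N$-diagonal --- but this lands you in $\fM_j \otimes M_N$, which is isomorphic to $\fM_j$ in the properly infinite cases but \emph{not} when $\fM_j$ is II$_1$. You acknowledge this requires ``careful amplification and re-embedding,'' but that phrase is doing a lot of unexamined work; making it rigorous in the II$_1$ setting requires exactly the kind of rational trace matching that the paper supplies via Lemmas~\ref{lem:approx-reals-1} and~\ref{lem:approx-reals-2}, after which one realizes the matrix subalgebra $M_{b_j}(\bC)$ \emph{inside} $Q_j \fM Q_j$ with the diagonal sitting in $\A$ and applies Theorem~\ref{thm:matrix-constant-diagonal} directly. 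In short: the concrete Fourier-matrix construction and the rational approximation machinery are not implementation details you can outsource to Dixmier averaging; they are the proof.
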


The proof of Theorem \ref{thm:multivariate-carpenter} is divided into cases depending on the type of the von Neumann factor $\fM$. We also establish a similar result for UHF C*-algebras.

The intractability of the Schur-Horn problem for normal matrices can be explained (cf. Example \ref{ex:no-finite-dim-multivariate-carpenter}) by the fact that there is no finite dimensional analogue of the approximate multivariate carpenter theorem. 

We note that Massey and Ravichandran \cite{MR2014} recently and independently obtained results that are similar to the results in our paper. We also mention the recent paper \cite{KS2014}, where we establish a Schur-Horn type theorem for arbitrary operators in a II$_1$ factor by considering singular values instead of spectrum.

In addition to this introduction, there are three other sections. In Section \ref{sec:background} we briefly review a required matricial result and develop some useful terminology. In Section \ref{sec:multivariate-carpenter} we establish Theorem \ref{thm:multivariate-carpenter}, the approximate multivariate carpenter theorem. In Section \ref{sec:schur-horn} we establish our results on the Schur-Horn problem for normal operators.

\section{Background and Preliminaries} \label{sec:background}

For $n \in \bN$, let $\M_n(\bC)$ denote the $n \times n$ matrices over $\bC$, let $\D_n$ denote the diagonal subalgebra, and let $E_n : \M_n(\bC) \to \D_n$ denote the conditional expectation of $\M_n(\bC)$ onto $\D_n$. For $T \in \M_n(\bC)$, $\operatorname{tr}(T)$ will denote the (standard) trace of $T$, and $\tau(T)$ will denote the normalized trace of $T$, i.e.
\[
\tau(T) = \frac{1}{n} \operatorname{tr}(T).
\]

We require the following well-known and elementary result about matrices. For the sake of completeness, we include a short proof.

\begin{thm}
\label{thm:matrix-constant-diagonal}
Let $N \in \M_n(\bC)$ be a normal matrix.  Then there exists a unitary operator $U \in \M_n(\bC)$ such that every diagonal entry of $U^*BU$ is $\tau(B)$ for all $B \in \ca(N)$.
\end{thm}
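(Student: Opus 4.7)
The plan is to reduce the problem to constructing a single ``flat'' unitary that simultaneously equidistributes every diagonal matrix. Since $N$ is normal, the spectral theorem supplies a unitary $V \in \M_n(\bC)$ with $V^*NV = \diag(\lambda_1, \ldots, \lambda_n)$, where $\lambda_1, \ldots, \lambda_n$ are the eigenvalues of $N$. Every $B \in \ca(N)$ has the form $B = g(N)$ for some $g$ in $C(\sigma(N))$ via continuous functional calculus, so $V^*BV = \diag(g(\lambda_1), \ldots, g(\lambda_n))$ is itself diagonal. It therefore suffices to exhibit a single unitary $W \in \M_n(\bC)$ such that $W^*DW$ has constant diagonal equal to $\tau(D)$ for every diagonal matrix $D$; setting $U := VW$ will then yield $(U^*BU)_{ii} = \tau(V^*BV) = \tau(B)$ for all $B \in \ca(N)$.

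For an arbitrary diagonal matrix $D = \diag(d_1, \ldots, d_n)$, the $i$-th diagonal entry of $W^*DW$ is
\[
(W^*DW)_{ii} = \sum_{j=1}^n |W_{ji}|^2\, d_j.
\]
Requiring this to equal $\frac{1}{n}\sum_j d_j$ for every choice of $(d_j)$ forces the pointwise condition $|W_{ji}|^2 = 1/n$ for all $i,j$. Such a unitary is easy to exhibit: the normalized Fourier matrix $W_{jk} = \frac{1}{\sqrt{n}}\,\omega^{jk}$ with $\omega = e^{2\pi i/n}$ is unitary and has all entries of modulus $1/\sqrt{n}$; any complex Hadamard matrix would work equally well.

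Combining the two steps, the unitary $U := VW$ satisfies the conclusion of the theorem. There is no substantive obstacle in this argument: it is a short consequence of the spectral theorem for normal matrices, together with the classical existence of a complex Hadamard matrix in every finite dimension. The only point worth emphasizing is that the ``flatness'' condition $|W_{ji}|^2 = 1/n$ is precisely what encodes the requirement that a \emph{single} $W$ handle every element of $\ca(N)$ at once.
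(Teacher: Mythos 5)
Your proof is correct and follows essentially the same route as the paper: diagonalize $N$ by the spectral theorem, observe that every $B \in \ca(N)$ is simultaneously diagonalized, and then conjugate by the normalized Fourier matrix (which the paper writes out explicitly as the Vandermonde matrix in $\zeta_n$) to flatten the diagonal. Your derivation of the condition $|W_{ji}|^2 = 1/n$ makes the role of the Fourier matrix slightly more transparent, but this is a presentational difference rather than a genuinely different argument.
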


\begin{proof}
Since $N$ is a normal matrix, there exists a unitary $W$ such that $W^*NW$ is diagonal. Fix $B \in \ca(N)$. Note $W^*BW$ is also diagonal. Let $\zeta_n = e^{\frac{2\pi i}{n}}$ and let
\[
V = \frac{1}{\sqrt{n}} \left[  \begin{array}{ccccc} (\zeta_n^1)^1 & (\zeta_n^2)^1 & \ldots & (\zeta_n^{n-1})^1 & (\zeta_n^n)^1 \\ (\zeta_n^1)^2 & (\zeta_n^2)^2 & \ldots & (\zeta_n^{n-1})^2 & (\zeta_n^n)^2 \\ \vdots & \vdots & & \vdots & \vdots \\ (\zeta_n^1)^{n-1} & (\zeta_n^2)^{n-1} & \ldots & (\zeta_n^{n-1})^{n-1} & (\zeta_n^n)^{n-1} \\ (\zeta_n^1)^n & (\zeta_n^2)^n & \ldots & (\zeta_n^{n-1})^n & (\zeta_n^n)^n  \end{array} \right].
\]
It is easy to check that $V$ is a unitary matrix and the diagonal entries of the matrix $V^*W^*BWV$ are equal to $\tau(B)$. Since $B$ was arbitrary we can take $U = WV$.
\end{proof}

To simplify discussions of Theorem \ref{thm:multivariate-carpenter}, we make the following definition.

\begin{defn}
Let $\fN$ be a von Neumann algebra.  A finite collection of elements $\{A_k\}^n_{k=1} \subseteq \fN$ is said to be a partition of unity in $\fN$ if $0 \leq A_k \leq I_\fN$ for all $k \in \{1,\ldots, n\}$ and $\sum^n_{k=1} A_k = I_\fN$.
\end{defn}

\section{Approximate Multivariate Carpenter Theorem} \label{sec:multivariate-carpenter}

In this section we will prove Theorem \ref{thm:multivariate-carpenter}. The proof will be divided into cases depending on the type of the underlying von Neumann factor. But first, we will show that no analogue of Theorem \ref{thm:multivariate-carpenter} can hold in finite dimensions.

\subsection{Type $I_n$ factors}

The following example shows that no analogue of Theorem \ref{thm:multivariate-carpenter} holds in finite dimensions, i.e. for von Neumann factors of type $I_n$.

\begin{example} \label{ex:no-finite-dim-multivariate-carpenter}
In \cite{A2007}*{Lemma 4}, it is shown that if $N = \diag(0, 1, i)$ and $A = \diag\left(\frac{1}{2}, \frac{i}{2}, \frac{1+i}{2}\right)$, then although $\sigma(A) \subseteq \cconv(\sigma(N))$ there does not exist a unitary $U \in \M_3(\bC)$ such that $E_3(U^*NU) = A$.
 
Consider the projections
\[
P_1 = \diag(1,0,0), P_2 = \diag(0,1,0), \AND P_3 = \diag(0,0,1),
\]
along with the positive contractions
\[
A_1 = \diag\left(\frac{1}{2}, \frac{1}{2}, 0 \right), A_2 = \diag\left(\frac{1}{2}, 0, \frac{1}{2}\right), \AND A_3 = \diag\left(0, \frac{1}{2}, \frac{1}{2} \right).
\]
Clearly these matrices all belong to $\D$, $\operatorname{tr}(A_k) = 1$ for $k=1,2,3$ and
\[
A_1 + A_2 + A_3 = I_3.
\]
Hence by Kadison's Carpenter Theorem there are unitaries $U_1,U_2,U_3 \in \M_3(\bC)$ such that $E_3(U_k^*P_kU_k) = A_k$ for $k=1,2,3$.

Now suppose that for any $\epsilon > 0$ there is a unitary operator $U \in \M_3(\bC)$ such that
\[
\left\|A_k - E_3(U^*P_kU)\right\| < \epsilon
\]
for $k=1,2,3$. Then, since the unitary group of $\M_3(\bC)$ is compact, there must be a unitary $U \in \M_3(\bC)$ such  that $A_k = E_3(U^*P_kU)$ for $k=1,2,3$. However, writing $N = 0P_1 + 1P_2 + iP_3$ gives 
\[
E_3(U^*NU) = \diag\left(\frac{1}{2}, \frac{i}{2}, \frac{1+i}{2}\right) = A,
\]
which is a contradiction.
\end{example}

\subsection{Type I$_\infty$ factors}

We now consider the case of the separable von Neumann factor of type I$_\infty$. This is the algebra $\B(\H)$ of bounded operators acting on a separable infinite dimensional Hilbert space $\H$.

\subsubsection{The case of the discrete MASA}

We first consider the case of the (discrete) diagonal MASA $\D$ in $\B(\H)$. We will require the following approximation result for partitions of unity in finite dimensions that are nearly scalar operators.

\begin{lem}
\label{lem:matrix-diagonal-one-different}
Let $\{\alpha_{k}\}^n_{k=1}, \{\beta_k\}^n_{k=1} \subseteq [0,1]$ be such that $\sum^n_{k=1} \alpha_k = 1 = \sum^n_{k=1} \beta_k$. Then for every $\epsilon > 0$ there exists $\ell \in \bN$ and pairwise orthogonal projections $\{P_k\}^n_{k=1} \in \M_\ell(\bC)$ such that $\sum^n_{k=1} P_k = I_\ell$ and
\[
\left\|E_\ell(P_k) - \diag(\alpha_k, \beta_k, \beta_k, \ldots, \beta_k)\right\| < \epsilon \text{ for all }k.
\]
\end{lem}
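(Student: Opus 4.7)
The plan is to combine Theorem \ref{thm:matrix-constant-diagonal} (which flattens diagonals on bulk coordinates) with a rank-$n$ perturbation that inserts the prescribed values $\alpha_k$ at the coordinate $e_0$.  After a harmless initial perturbation of the $\beta_k$'s within tolerance $\epsilon/3$, I may assume all $\beta_k > 0$.  Fix a large integer $m$ (to be chosen in terms of $\epsilon$), set $\ell = 1 + nm$, partition $\{1, \ldots, \ell-1\}$ into $n$ blocks $V_1, \ldots, V_n$ of size $m$, and choose integers $s_k \geq 1$ with $\sum_k s_k = m$ and $|s_k/m - \beta_k| < \epsilon/3$.

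Inside each $V_l \cong \M_m(\bC)$, I would apply Theorem \ref{thm:matrix-constant-diagonal} to a normal element of the form $\sum_k k\, Q_k^{(l,0)}$, where the spectral projections $Q_k^{(l,0)}$ are diagonal block projections of rank $s_k$.  This produces pairwise orthogonal projections $R_k^{(l)}$ on $V_l$ of rank $s_k$, summing to $I_{V_l}$ and each with constant diagonal $s_k/m$.  Setting $P_k^{(0)} := \bigoplus_{l=1}^n R_k^{(l)}$, these are pairwise orthogonal projections on $\bC^{\ell-1}$ with constant diagonal $s_k/m \approx \beta_k$, diagonal $0$ at $e_0$, and summing to $I_\ell - e_0 e_0^*$.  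Inspecting the proof of Theorem \ref{thm:matrix-constant-diagonal}, the conjugating unitary can be chosen so that each of its columns has all entries of modulus $1/\sqrt{m}$; picking $\xi_k$ to be such a column lying in $\text{range}(R_k^{(k)}) \subseteq V_k$ yields unit vectors with $|\xi_k(j)|^2 = 1/m$ for $j \in V_k$ and $0$ otherwise, which are automatically pairwise orthogonal since they live in disjoint blocks.

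Now perform the rank-$n$ modification.  Let $C$ be any $n \times n$ matrix with $CC^* = I - \sqrt{\alpha}\sqrt{\alpha}^T$, where $\sqrt{\alpha} = (\sqrt{\alpha_1}, \ldots, \sqrt{\alpha_n})^T$; such a $C$ exists because this Gram matrix is positive semidefinite of rank $n-1$.  Set $y_k := \sum_l C_{kl}\xi_l$ and $\eta_k := \sqrt{\alpha_k}\, e_0 + y_k$; a direct check using $\sum \alpha_k = 1$ shows that $\{\eta_k\}_{k=1}^n$ is orthonormal.  Let $\eta_0$ be any unit vector in $\spn\{e_0, \xi_1, \ldots, \xi_n\}$ orthogonal to all the $\eta_k$; since $\sum_k |\eta_k(0)|^2 = \sum \alpha_k = 1$, $\eta_0$ is necessarily orthogonal to $e_0$.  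For a fixed index $k_0$, define
\[
P_k := P_k^{(0)} - \xi_k\xi_k^* + \eta_k\eta_k^* \quad (k \neq k_0), \qquad P_{k_0} := P_{k_0}^{(0)} - \xi_{k_0}\xi_{k_0}^* + \eta_{k_0}\eta_{k_0}^* + \eta_0\eta_0^*.
\]

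The step I expect to be the main (indeed, only) obstacle is verifying that $\{P_k\}$ is a partition of unity by pairwise orthogonal projections.  The subtlety is that each $\eta_k$ has nonzero component along every $\xi_l$, not just $\xi_k$, so one must check that the part of $\eta_k$ inside $\text{range}(P_l^{(0)})$ is exactly $C_{kl}\xi_l \in \bC\xi_l$, which is precisely the direction removed when $\xi_l\xi_l^*$ was subtracted from $P_l^{(0)}$.  This works because $\xi_{l'}$ lies in $\text{range}(R_{l'}^{(l')})$, which is orthogonal to $\text{range}(R_l^{(l')})$ for $l \neq l'$ (both subspaces live in the same block $V_{l'}$ and come from orthogonal projections there).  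Once orthogonality is in hand, the diagonal estimate is immediate: $(P_k)_{00} = \alpha_k$ exactly, and for $j \in V_l$ the three perturbation terms $|\xi_k(j)|^2$, $|\eta_k(j)|^2$, and $|\eta_0(j)|^2$ are each at most $1/m$, giving $|(P_k)_{jj} - \beta_k| \leq |s_k/m - \beta_k| + 3/m < \epsilon/3 + 3/m$, which is less than $\epsilon$ once $m > 9/\epsilon$.
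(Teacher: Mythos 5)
Your proof is correct and takes a genuinely different route from the paper's. The paper approximates both $\alpha_k$ and $\beta_k$ by rationals, sets $\ell = N_1 + (N_1-1)N_0N_2$, and applies Theorem~\ref{thm:matrix-constant-diagonal} in two sequential stages: first inside an $\M_{N_1}(\bC)$ summand to spread the $\alpha_k$-contribution over $N_1$ coordinates with constant diagonal $\alpha_k$, and then inside $N_1-1$ copies of $\M_{N_0N_2+1}(\bC)$ to blend those distributed values with the $\beta_k$ contributions, so that every coordinate (including the first) is achieved only approximately. You instead approximate only the $\beta_k$'s, set $\ell = 1 + nm$, apply the DFT construction once per block of size $m$ to flatten the $\beta$ diagonals, and then perform an explicit rank-$n$ surgery built from a factorization $CC^* = I - \sqrt{\alpha}\sqrt{\alpha}^T$; the vectors $\eta_k = \sqrt{\alpha_k}\,e_0 + \sum_l C_{kl}\xi_l$ are orthonormal precisely because the $\sqrt{\alpha}\sqrt{\alpha}^T$ cross-terms cancel against the $CC^*$ cross-terms, and $\eta_0 \perp e_0$ follows from $\sum_k \alpha_k = 1$. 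You correctly identify and resolve the one delicate point, namely that $P_k^{(0)} - \xi_k\xi_k^* + \eta_k\eta_k^*$ remains a projection even though $\eta_k$ has a component along every $\xi_l$: the overlap with $\xi_l$ for $l \neq k$ is harmless because $\xi_l$ lives in the range of $R_l^{(l)}$, which is orthogonal to the range of $R_k^{(l)}$ inside the block $V_l$, so $\xi_l$ is already orthogonal to the range of $P_k^{(0)}$. The payoff of your approach is that $(P_k)_{00} = \alpha_k$ exactly and only a single application of Theorem~\ref{thm:matrix-constant-diagonal} is needed, at the cost of the Gram-matrix bookkeeping; the paper's approach trades that linear algebra for a purely combinatorial rank-count. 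Both are valid and of comparable length.
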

\begin{proof}
By applying a small perturbation, we may assume that $\alpha_k$ and $\beta_k$ are rational for all $k$.  Fix $\epsilon > 0$.  Choose $N_1, N_2 \in \bN$ and $\{\alpha'_k\}^n_{k=1}, \{\beta'_k\}^n_{k=1} \subseteq \bN \cup \{0\}$ such that $\alpha_k = \frac{1}{N_1} \alpha'_k$ and $\beta_k = \frac{1}{N_2} \beta'_k$ for all $k$. Choose $N_0 \in \bN$ such that
\[
\left| \frac{\alpha_k + N_0 \beta'_k}{N_0N_2 + 1} - \beta_k  \right| < \epsilon
\]
for all $k$, and let $\ell = N_1 + (N_1 - 1)N_0N_2 \in \bN$. 

Notice that
\[
\M_{N_1}(\bC) \oplus \M_{N_0N_2}(\bC)^{\oplus (N_1-1)}
\]
can be identified with a matrix subalgebra of $\M_\ell(\bC)$ such that
\[
\D_{N_1} \oplus \D_{N_0N_2}^{\oplus (N_1-1)}  = \D_\ell.
\]
 
Choose two collections of pairwise orthogonal projections 
\[
\{Q'_k\}^n_{k=1} \subseteq \D_{N_1} \qand \{Q''_{k}\}^n_{k=1} \subseteq \D_{N_0N_2}(\bC)
\]
such that 
\[
\mathrm{rank}(Q'_k) = \alpha'_k \qand \mathrm{rank}(Q''_k) = N_0\beta'_k
\]
for all $k$. Let
\[
Q_k = Q'_k \oplus (Q''_k)^{\oplus (N_1 - 1)}.
\]
It is clear that $\{Q_k\}^n_{k=1}$ are pairwise orthogonal projections summing to $I_\ell$. 
 
By Theorem \ref{thm:matrix-constant-diagonal} there exists a unitary $W_1 \in \M_{N_1}(\bC)$ such that each diagonal entry of $W_1^*Q'_kW_1$ is the average of the diagonal entries of $Q'_k$, which is 
\[
\frac{1}{N_1}(\mathrm{rank}(Q'_k)) = \alpha_k.
\]
Thus the $Q_k$ are simultaneously unitarily equivalent to projections of the form
\[
A_k \oplus (Q''_k)^{\oplus (N_1 - 1)}
\]
where each diagonal entry of $A_k \in \M_{N_1}(\bC)$ is $\alpha_k$. 

For each $j  = 1,\ldots, N_1-1$, by pairing the $(j+1)$-th index in $\M_{N_1}(\bC)$ with the $j$-th matrix entry in the direct summand $\M_{N_0N_2}(\bC)^{\oplus (N_1 - 1)}$, we further obtain that the $Q_k$ are jointly unitarily equivalent in $\M_\ell(\bC)$ to block matrices of the form
\[
\left[  \begin{array}{ccccc} \alpha_k & A_{k,1, 2} & A_{k,1,3} & \cdots & A_{k,1,N_1} \\ A_{k,2,1} & D_k & A_{k,2,3} & \cdots & A_{k,2,N_1} \\ A_{k,3,1} & A_{k,3,2} & D_k & \cdots & A_{k,3,N_1} \\ \vdots & \ddots & \ddots & \ddots & \vdots \\ A_{k,N_1, 1} & A_{k,N_1, 2} & A_{k,N_1, 3} & \cdots & D_k \end{array} \right]
\]
where $D_k \in \M_{N_0N_2+1}(\bC)$ is a diagonal matrix with $\alpha_k$ appearing once along the diagonal, $1$ appearing $\mathrm{rank}(Q''_k) = N_0\beta'_k$ times along the diagonal, and all other diagonal entries being zero.  

Applying Theorem \ref{thm:matrix-constant-diagonal} again, there exists a unitary $W_2 \in \M_{N_0N_2+1}(\bC)$ such that each diagonal entry of $W_2^*D_kW_2$ is the average of the diagonal entries of $D_k$, which is
\[
\frac{\alpha_k + N_0 \beta'_k}{N_0N_2 + 1}.
\]
It follows that conjugating the above matrices with the unitary
\[
1 \oplus W_2^{\oplus (N_1-1)} \in \bC \oplus \M_{N_0N_2+1}(\bC),
\]
results in projection matrices $P_k$ satisfying the desired conclusion.
\end{proof}

\begin{thm}
\label{thm:approx-carpenter-BH}
Theorem \ref{thm:multivariate-carpenter} holds in the case $\fM = \B(\H)$ and $\A$ is a discrete MASA.
\end{thm}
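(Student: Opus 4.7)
The plan is to construct the $P_k$'s blockwise along a fixed diagonal basis, using Lemma \ref{lem:matrix-diagonal-one-different} on each block. Identify $\A$ with the algebra of diagonal operators in $\B(\H)$ via an orthonormal basis $\{e_j\}_{j \in \bN}$, and write $A_k = \sum_j \alpha_{k,j} e_{jj}$ with $\alpha_{k,j} \in [0,1]$ and $\sum_k \alpha_{k,j} = 1$ for every $j$; fix $\epsilon > 0$.

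I would first discretize: partition the standard simplex $\Delta_n = \{x \in [0,1]^n : \sum_k x_k = 1\}$ into finitely many Borel sets $V_1, \ldots, V_m$ of diameter less than $\epsilon/4$, and choose representatives $b^v = (\beta_1^v, \ldots, \beta_n^v) \in V_v \cap \Delta_n$. The level sets $S_v = \{j : (\alpha_{1,j}, \ldots, \alpha_{n,j}) \in V_v\}$ partition $\bN$ into finitely many subsets, and at least one is infinite. For each infinite $S_v$, Lemma \ref{lem:matrix-diagonal-one-different} applied with $\alpha_k = \beta_k = \beta_k^v$ yields projections $\{Q_k^v\}_{k=1}^n \subseteq \M_{\ell(v)}$ summing to $I_{\ell(v)}$ with diagonal entries within $\epsilon/4$ of $\beta_k^v$; I would partition $S_v$ into infinitely many blocks of size $\ell(v)$ and place a copy of $Q_k^v$ on each. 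The finitely many indices lying in finite $S_v$'s would be ``buried'' inside the tiling of a fixed infinite $S_w$: for each such index $t$ with level $b^v$, a single additional block of size $\ell'$ is constructed via Lemma \ref{lem:matrix-diagonal-one-different} with $\alpha_k = \beta_k^v$ and $\beta_k = \beta_k^w$, placing $t$ at the distinguished first position and $\ell' - 1$ reserved fresh indices from $S_w$ at the remaining positions. Summing over all blocks yields pairwise orthogonal projections $\{P_k\}$ with $\sum_k P_k = I_{\H}$ and $\|E_\A(P_k) - A_k\| < \epsilon/2$, where the diameter bound on $V_v$ contributes $\epsilon/4$ and the lemma's approximation another $\epsilon/4$.

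The main technical obstacle will be enforcing the essential-spectrum condition $\sigma(P_k) = \sigma_e(P_k)$, equivalent for a projection to requiring $P_k$ to be $0$, $I$, or to have both infinite rank and infinite corank. By the construction above, $P_k$ has infinite rank if and only if some infinite $S_v$ has $\beta_k^v > 0$, and similarly infinite corank corresponds to $\beta_k^v < 1$ on some infinite $S_v$. The hard case is when $A_k$ is essentially zero or essentially one: all of its $\beta_k^v$ on infinite $S_v$ can lie in $\{0,1\}$, forcing the constructed $P_k$ to have merely finite rank or corank. I would remedy this by perturbing the representatives at the selection stage: whenever $\beta_k^v \in \{0,1\}$ for an infinite $S_v$, push it strictly into $(0,1)$ by a small $\delta > 0$, compensating another coordinate so that $\sum_k \beta_k^v = 1$ persists. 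The accumulated perturbation is $O(m\delta)$ in operator norm and can be absorbed into the $\epsilon$ budget, while guaranteeing that every nontrivial $P_k$ has both infinite rank and infinite corank; the degenerate cases $A_k \equiv 0$ or $A_k \equiv I$ are handled directly by setting $P_k = 0$ or $P_k = I$.
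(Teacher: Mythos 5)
Your proposal is correct and follows essentially the same route as the paper's proof: discretize the diagonal data, tile the infinite level sets using Lemma \ref{lem:matrix-diagonal-one-different} in its ``constant'' form ($\alpha_k = \beta_k$), bury the finitely many exceptional indices into one infinite level set using the ``one-different'' form of the same lemma, and secure the essential-spectrum condition by a small perturbation that pushes the representative values off the boundary of the simplex. The one organizational difference worth noting is that you discretize the simplex $\Delta_n$ directly and pick representatives on it, whereas the paper first normalizes $A_n$ to be uniformly bounded away from $0$ and $1$ and then discretizes $A_1,\dots,A_{n-1}$ separately, recovering $A_n$'s discretization from the constraint $\sum_k A_k = I$; your version cleanly avoids the bookkeeping around preserving $A_n \geq 0$ under perturbation, and that part of your argument is arguably tidier. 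One small presentational blemish: you describe handling $A_k \equiv 0$ (resp. $A_k \equiv I$) both by setting $P_k = 0$ (resp. $I$) and by the interior-perturbation device; these are alternatives and you should commit to one (the perturbation approach handles these cases uniformly and needs no special-casing). Also, the accumulated perturbation error is $O(\delta)$ in operator norm (a supremum over diagonal entries), not $O(m\delta)$, but that is a harmless overestimate.
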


\begin{proof}
We begin with several perturbations to place the $A_k$s into a more desirable form so that Lemma \ref{lem:matrix-diagonal-one-different} may be applied.  Since the result is trivial for $n = 1$, we assume $n \geq 2$. Fix $\epsilon > 0$. Let $\{e_m\}_{m\geq1}$ be an orthonormal basis for $\H$ corresponding to $\D$. By perturbing each $A_k$ if necessary, we may assume (at a cost of $2\epsilon$ to our approximation) that $2\epsilon I_\H \leq A_n \leq (1-2\epsilon) I_\H$.

Since a diagonal self-adjoint operator has at most countably many eigenvalues of finite multiplicity, there is $N \in \bN$ such that the diagonal entries of $A_k$ with index greater than $N$ are within $\frac{\epsilon}{n}$ of the essential spectrum of $A_k$ for all $k = 1,\ldots,n-1$. Since $2\epsilon I_\H \leq A_n \leq (1-2\epsilon) I_\H$, we may then, at a cost of $\epsilon$ to our approximation, perturb the diagonal entries of each $A_k$ so that for $k =1,\ldots,n-1$ the diagonal entries of $A_k$ with index greater than $N$ belong to the essential spectrum of $A_k$ and $\epsilon I_\H \leq A_n \leq (1-\epsilon) I_\H$.

By the above paragraphs, we have reduced the problem to the case where $\epsilon I_{\H} \leq A_n \leq (1-\epsilon)I_{\H}$ and $\H$ decomposes as $\H = \H_0 \oplus \H'$, where $\H_0$ is a finite dimensional Hilbert space, such that with respect to this decomposition of $\H$ we can write
\[
A_k = D_k \oplus A'_k,
\]
where $D_k$ and $A'_k$ are diagonal operators satisfying
\[
\sum^n_{k=1} D_k = I_{\H_0}, \quad \sum^n_{k=1} A'_k = I_{\H'}, \qand \sigma(A'_k) = \sigma_e(A'_k) \text{ for }k \neq n.
\]
 
Let $\{e'_m\}_{m\geq 1}$ be the orthonormal basis of $\H'$ obtained from $\{e_m\}_{m \geq 1}$ by removing the first $N$ elements and reindexing without reordering. 

For $k = 1,\ldots,n-1$ let $\{\alpha_{k,j}\}^{\ell_k}_{j=1} \subseteq \sigma(A'_k)$ be an $\frac{\epsilon}{n}$-cover.  By perturbing both $A'_k$ and $A'_n$ simultaneously in each diagonal entry by at most $\frac{\epsilon}{n}$, we may assume, at a cost of $\epsilon$ to the final approximation, and of losing the inequality $\epsilon I_{\H'} \leq A'_n \leq (1-\epsilon)I_{\H'}$, that
\[
A'_k = \diag(\beta_{k,1}, \beta_{k,2}, \ldots)
\]
where $\beta_{k,m} \in \{\alpha_{k,j}\}^{\ell_k}_{j=1}$ for $k =1,\ldots, n-1$.
 
For each $(i_1, \ldots, i_{n-1}) \in \prod^{n-1}_{k=1} \{1,\ldots, \ell_j\}$, let
\[
I_{(i_1, \ldots, i_{n-1})} = \{j \in \bN \, \mid \, \beta_{k,j} = \alpha_{k,i_j} \mbox{ for all } k  = 1,\ldots, n-1\}.
\]
Note that $\bN$ is a disjoint union of the $I_{(i_1, \ldots, i_{n-1})}$.  Therefore, by combining elements of $\{e'_m\}_{m\geq 1}$ according to which set $I_{(i_1, \ldots, i_{n-1})}$ the index $m$ belongs to, we can write
\[
\H = \H_0 \oplus \H'_0 \oplus \left( \oplus^\ell_{j=1} \H_j\right) 
\]
where $\ell \in \bN$, $\H_0$ and $\H'_0$ are finite dimensional Hilbert spaces, each $\H_j$ is an infinite dimensional Hilbert space, and such that with respect to this decomposition of $\H$,
\[
A_k = D_k \oplus D'_k \oplus \left( \oplus^\ell_{j=1} \gamma_{k,j} I_{\H_j}  \right)
\]
where $D_k$ and $D'_k$ are each diagonal operators,
\[
\sum^n_{k=1} D_k = I_{\H_0}, \quad \sum^n_{k=1} D'_j = I_{\H'_0},
\]
and
\[\{\gamma_{k,j}\, \mid \, j =1,\ldots, \ell, k =1,\ldots, n\} \subseteq [0,1]
\]
with $\sum^n_{k=1} \gamma_{k,j} = 1$ for each $j$. Note that although our assumptions were only explicitly$A_1, \ldots A_{n-1}$, we necessarily obtain the correct form for $A_n$ because of the hypothesis that $\sum^n_{k=1} A_k = I_\H$.
 
By increasing $\ell$ if necessary, by decomposing one of the infinite dimensional Hilbert spaces occurring in the above decomposition into multiple infinite dimensional Hilbert spaces, we may assume that $\mathrm{dim}(\H_0 \oplus \H'_0) \leq \ell$.  This decomposition into direct sums further reduces the problem to one of two cases: either
\[
A_k = \beta_k I_{\mathcal{K}} \in \B(\mathcal{K}) \text{ for all } k,
\]
where $\mathcal{K}$ is a infinite dimensional Hilbert space and $\beta_k \in [0,1]$ satisfy $\sum^n_{k=1} \beta_k = 1$, or
\[
A_k = \alpha_k \oplus \beta_k I_{\mathcal{K}} \in \B(\bC \oplus \mathcal{K}) \text{ for all } k
\]
where $\mathcal{K}$ is a infinite dimensional Hilbert space and $\alpha_k, \beta_k \in [0,1]$ satisfy $\sum^n_{k=1} \beta_k = \sum^n_{k=1} \alpha_k = 1$.
 
For the first case, we may apply Lemma \ref{lem:matrix-diagonal-one-different} with $\alpha_k = \beta_k$ to obtain $m \in \bN$ and pairwise orthogonal projections $\{P_k\}^n_{k=1} \in \M_m(\bC)$ such that $\sum^n_{k=1} P_k = I_m$, and such that each entry along the diagonal of $P_k$ is within $\epsilon$ of $\beta_k$. In this case, the result now follows by taking an infinite direct sum of the $P_k$.  To ensure that $\sigma(P_k) = \sigma_e(P_k)$, we can perturb each $\beta_k$ so that $2\epsilon \leq \beta_k \leq 1-2\epsilon$ and require the approximation from Lemma \ref{lem:matrix-diagonal-one-different} to be within $\epsilon$ of the desired matrices.  This implies that the projections with the desired diagonal are non-trivial. Thus, taking their direct sum gives projections with spectrum equal to the essential spectrum.
 
For the second case, we may apply Lemma \ref{lem:matrix-diagonal-one-different} to obtain $m \in \bN$ and pairwise orthogonal projections $\{P_k\}^n_{k=1} \in \M_m(\bC)$ such that $\sum^n_{k=1} P_k = I_m$, the first diagonal entry of each $P_k$ is within $\epsilon$ of $\alpha_k$, and such that the other entries along the diagonal of $P_k$ are each within $\epsilon$ of the corresponding $\beta_k$.  The result in this case now follows from the first case by excluding the first $m$ basis vectors of $\K$.
\end{proof}

\subsubsection{The case of a continuous MASA}

For the case of a continuous MASA in $\B(\H)$, we first deal with the case when each $A_k$ is a scalar multiple of the identity.

\begin{lem}
\label{lem:continuous-masa-scalar-case}
Theorem \ref{thm:multivariate-carpenter} holds in the case $\fM = \B(\H)$, $\A$ is a continuous MASA, and $A_k = \alpha_k I_\H$ for each $k$, where the $\alpha_k \in [0,1]$ satisfy $\sum^n_{k=1} \alpha_k = 1$.

\end{lem}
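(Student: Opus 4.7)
The plan is to collapse the problem to the finite-dimensional fact Theorem~\ref{thm:matrix-constant-diagonal} by embedding a suitable copy of $\M_N(\bC)$ into $\B(\H)$ compatibly with $\A$. Since $\H$ is separable and $\A$ is continuous, I would first invoke the standard classification to identify $\A$ with $L^\infty([0,1],m)$ acting by multiplication on $\H = L^2([0,1],m)$, where $m$ is Lebesgue measure. Given $\epsilon > 0$, choose rational approximations $\alpha_k' = p_k/N$ with $p_1,\ldots,p_n$ nonnegative integers summing to $N$ and $|\alpha_k - \alpha_k'| < \epsilon$.

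Next, to build the matricial embedding, partition $[0,1]$ into $N$ Borel sets $I_1,\ldots,I_N$ of equal measure $1/N$, fix measure-preserving bijections $\phi_j : I_j \to I_1$ (with $\phi_1 = \mathrm{id}$), and set $\phi_{ij} = \phi_i^{-1} \circ \phi_j$. These induce matrix units $\{E_{ij}\}_{i,j=1}^N \subseteq \B(\H)$ with $E_{ii}$ equal to multiplication by $\chi_{I_i} \in \A$, generating a unital copy of $\M_N(\bC)$. Then choose diagonal projections $Q_1,\ldots,Q_n \in \D_N$ of ranks $p_1,\ldots,p_n$ that are pairwise orthogonal and sum to $I_N$, and apply Theorem~\ref{thm:matrix-constant-diagonal} to the normal matrix $\sum_k k Q_k$ to produce a single unitary $W \in \M_N(\bC)$ such that every diagonal entry of each $W^*Q_k W$ equals $\tau(Q_k) = p_k/N = \alpha_k'$. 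Setting $P_k := W^* Q_k W \in \M_N(\bC) \subseteq \B(\H)$ then gives pairwise orthogonal projections summing to $I_\H$.

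For the verification, the $\A$-bimodule property of $E_\A$ forces $E_\A(E_{ij}) = \chi_{I_i} E_\A(E_{ij}) \chi_{I_j} = 0$ whenever $i \neq j$, since $I_i \cap I_j = \emptyset$. Consequently $E_\A(P_k) = \sum_i (P_k)_{ii} \chi_{I_i} = \alpha_k' I_\H$, so $\|A_k - E_\A(P_k)\| = |\alpha_k - \alpha_k'| < \epsilon$, giving condition (2). Condition (1) is built into the construction, and condition (3) holds because each $E_{ii}$ is infinite-rank on $\H$, so when $0 < p_k < N$ the projection $P_k$ has infinite rank and corank and hence $\sigma(P_k) = \sigma_e(P_k) = \{0,1\}$ (the cases $p_k \in \{0,N\}$ are trivial). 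The main subtlety I expect is that $E_\A$ is not normal when $\A$ is a continuous MASA in $\B(\H)$, which rules out limiting arguments that would try to build the $P_k$ from finite-rank approximations; the construction above sidesteps this entirely by working inside a finite-dimensional matrix subalgebra, where the action of $E_\A$ is pinned down by the bimodule identity regardless of normality.
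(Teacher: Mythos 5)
Your proposal is correct and follows essentially the same route as the paper: approximate the $\alpha_k$ by rationals $p_k/N$, realize a unital copy of $\M_N(\bC)$ inside $\B(\H)$ whose diagonal matrix units live in $\A$ (the paper takes $N$ equivalent orthogonal projections in $\A$ summing to $I_\H$; you build them concretely via $L^\infty[0,1]$ and measure-preserving bijections), apply Theorem~\ref{thm:matrix-constant-diagonal} to get a single unitary making each $W^*Q_kW$ have constant diagonal, and conjugate. Your explicit verification of $E_\A(P_k)=\alpha_k' I_\H$ via the $\A$-bimodule property and of condition (3) simply spells out details the paper leaves implicit.
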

\begin{proof}
Without loss of generality, we may assume that $\alpha_k = \frac{a_k}{N}$ for each $k$, where the $a_k \in \mathbb{N}\cup \{0\}$ satisfy $\sum^n_{k=1} a_k = N$.  Since $\A$ is a continuous MASA, there exist pairwise orthogonal equivalent projections $\{Q'_j\}^N_{j=1} \subseteq \A$ such that $\sum^N_{j=1} Q'_j = I_\H$.  These projections give rise to a copy of $\M_N(\bC)$ in $\B(\H)$.  Choose pairwise orthogonal projections $\{Q_k\}^n_{k=1} \subseteq \A$ such that each $Q_k$ is a sum of $a_k$ distinct $Q'_j$s.  Theorem \ref{thm:matrix-constant-diagonal} implies there exists a unitary $U \in \M_N(\bC) \subseteq \B(\H)$ such that $Q'_jU^*Q_kUQ'_j = \alpha_k Q'_j$ for all $j,k$.  Defining $P_k = U^*Q_kU$ yields the desired projections.
\end{proof}

\begin{thm}
\label{thm:continuous-masa-approx-carpenter}
Theorem \ref{thm:multivariate-carpenter} holds in the case $\fM = \B(\H)$ and $\A$ is a continuous MASA.

\end{thm}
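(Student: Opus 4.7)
The plan is to reduce to the scalar case treated in Lemma \ref{lem:continuous-masa-scalar-case} by cutting $I_\H$ into a finite spectral partition $\{Q_j\}_{j=1}^m \subset \A$ on which each $A_k$ is nearly constant, solving the scalar problem inside each corner $Q_j\B(\H)Q_j$, and reassembling. Identifying $\A$ with $L^\infty(X,\mu)$ for a continuous measure space and the $A_k$ with measurable functions $a_k\colon X\to[0,1]$ with $\sum_k a_k = 1$, I would take a common refinement of finite simple-function approximations of the $a_k$ to produce measurable sets $X_1,\dots,X_m$ of positive measure with $X = \bigsqcup_j X_j$, together with constants $\alpha_{k,j} \in [0,1]$ satisfying $|a_k(x) - \alpha_{k,j}| < \epsilon/(2n)$ for $x \in X_j$. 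Letting $Q_j \in \A$ denote the corresponding spectral projection, one has $\sum_j Q_j = I_\H$ and $\|A_k Q_j - \alpha_{k,j} Q_j\| < \epsilon/(2n)$, and summing over $k$ forces $|\sum_k \alpha_{k,j} - 1| < \epsilon/2$; this tiny defect can be absorbed by setting $\alpha_{n,j} = 1 - \sum_{k<n} \alpha_{k,j}$, which costs only $O(\epsilon)$ in the final approximation of $A_n$ and guarantees $\sum_k \alpha_{k,j} = 1$ exactly.

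The key structural observation is that every $Q_j$ has infinite rank in $\B(\H)$: otherwise $Q_j \A Q_j$ would be a MASA inside a finite-dimensional matrix algebra and would therefore contain a minimal projection, which would also be a minimal projection of $\A$, contradicting the continuity of $\A$. Consequently $Q_j \B(\H) Q_j$ is itself a separable type $I_\infty$ factor in which $Q_j \A Q_j$ is a continuous MASA, and Lemma \ref{lem:continuous-masa-scalar-case} applied to the scalar partition of unity $\{\alpha_{k,j} Q_j\}_{k=1}^n \subset Q_j \A Q_j$ produces pairwise orthogonal projections $\{P_{k,j}\}_{k=1}^n \subset Q_j \B(\H) Q_j$ with $\sum_k P_{k,j} = Q_j$ and $E_{Q_j \A Q_j}(P_{k,j}) = \alpha_{k,j} Q_j$.

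I would then set $P_k = \sum_{j=1}^m P_{k,j}$. These are pairwise orthogonal projections in $\B(\H)$ summing to $I_\H$. Because $Q_j \in \A$ and $E_\A$ is an $\A$-bimodule map, $E_\A(P_{k,j}) = Q_j E_\A(P_{k,j}) Q_j$ lies in $Q_j \A Q_j$ and therefore coincides with $E_{Q_j \A Q_j}(P_{k,j}) = \alpha_{k,j} Q_j$ by uniqueness of the conditional expectation; summing over the finite index set $j$ (no normality needed) yields $E_\A(P_k) = \sum_j \alpha_{k,j} Q_j$, which is within $\epsilon$ of $A_k$ by construction. For condition (3), each nonzero $P_{k,j}$ has infinite rank in $\B(Q_j \H)$ by the scalar lemma and hence in $\B(\H)$, so any nonzero $P_k$ has infinite rank; the identical argument applied to $Q_j - P_{k,j} = \sum_{k' \neq k} P_{k',j}$ shows that $I_\H - P_k$ is also $0$ or infinite rank, giving $\sigma(P_k) = \sigma_e(P_k)$. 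The hardest step is really the structural rank observation that the continuity of $\A$ forces every piece of the spectral partition to be infinite-rank; once that is in hand, everything reduces to routine bookkeeping of approximation errors together with the $\A$-bimodule identity.
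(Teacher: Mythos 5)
Your proof follows the same route as the paper's: reduce by approximation to the case that each $A_k$ has finite spectrum, cut $I_\H$ along the common refinement of the spectral projections to obtain $Q_j$'s (which are necessarily infinite-rank since $\A$ is continuous), apply Lemma~\ref{lem:continuous-masa-scalar-case} in each corner $Q_j\B(\H)Q_j$, and sum. One small caveat: your appeal to ``uniqueness of the conditional expectation'' to conclude $E_\A(P_{k,j}) = E_{Q_j\A Q_j}(P_{k,j})$ is not quite right, since conditional expectations onto a continuous MASA in $\B(\H)$ are not unique (indeed none of them is normal, by Lemma~\ref{lem:continuous-masa-expectation-anhilates-compact}); but the step is harmless because one can simply take $E_{Q_j\A Q_j}$ to be the restriction of $E_\A$ to the corner, or observe that Lemma~\ref{lem:continuous-masa-scalar-case} produces $P_{k,j}$ with $Q'_\ell P_{k,j}Q'_\ell = \alpha_{k,j}Q'_\ell$ for a \emph{finite} family $\{Q'_\ell\}\subset Q_j\A Q_j$ summing to $Q_j$, whence the $\A$-bimodule property of any conditional expectation forces $E_\A(P_{k,j}) = \alpha_{k,j}Q_j$.
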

\begin{proof}
By approximation, we may assume that the spectrum of each $A_k$ is finite.  By taking compressions of all possible intersections of the spectral projections of the $A_k$'s (which are necessarily infinite projections since $\A$ is continuous), we can apply Lemma \ref{lem:continuous-masa-scalar-case} to each compression. Taking a sum of these finite number of compressions then yields the result.
\end{proof}

\subsection{Type II$_1$ factors}

For the case of a von Neumann factor of type II$_1$, we first consider the case when each $A_k$ is a positive scalar. For this, we require the following two technical results about approximating real numbers.

\begin{lem} \label{lem:approx-reals-1}
Given $\epsilon > 0$, $n \in \bN$, and $\{r_k\}^n_{k=1} \subseteq (0,1)$, there exists $\{q_k\}^n_{k=1} \subseteq \mathbb{Q}$ such that
\begin{enumerate}
\item $q_k \in \left[ \frac{1}{2} r_k, r_k\right)$, 
\item $\left|\frac{r_k}{\sum^n_{j=1} r_j} - \frac{q_k}{\sum^n_{j=1} q_j}\right| < \epsilon$, and
\item $\left|\frac{r_k}{\sum^n_{j=1} r_j} - \frac{r_k - q_k}{\sum^n_{j=1} r_j - q_j}\right| < \epsilon$ for all $k$
\end{enumerate}
\end{lem}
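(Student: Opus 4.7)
The plan is to approximate each $r_k$ by a rational of the form $q_k \approx (1-\eta) r_k$ for a single small fixed parameter $\eta \in (0, 1/2)$. The motivation is that if one could take $q_k = (1-\eta) r_k$ exactly (ignoring the rationality constraint), then $\sum_j q_j = (1-\eta) \sum_j r_j$ and $\sum_j (r_j - q_j) = \eta \sum_j r_j$, so both ratios $q_k / \sum_j q_j$ and $(r_k - q_k) / \sum_j (r_j - q_j)$ would equal $r_k / \sum_j r_j$ exactly. Conditions (2) and (3) would then be automatic; the only real task is to show that the perturbation introduced by rounding to rationals can be kept uniformly small.

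Fix $\eta \in (0, 1/2)$, say $\eta = 1/4$, and let $\delta > 0$ be a small parameter, to be determined at the end. By density of $\mathbb{Q}$ in $\mathbb{R}$, for each $k$ one may select a rational $q_k$ lying in the interval $[(1-\eta-\delta) r_k,\, (1-\eta+\delta) r_k]$, which has positive length $2\delta r_k$. Define $\eta_k := 1 - q_k/r_k$, so that $|\eta_k - \eta| \leq \delta$; provided $\delta < \min(\eta,\, 1/2 - \eta)$, this forces $\eta_k \in (0, 1/2)$, and therefore $q_k \in [r_k/2,\, r_k)$, establishing (1).

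Writing $S := \sum_j r_j$ and $T := \sum_j (r_j - q_j) = \sum_j \eta_j r_j$, a direct computation yields
\[
\frac{r_k}{S} - \frac{q_k}{S-T} = \frac{r_k(\eta_k S - T)}{S(S-T)}, \qquad \frac{r_k}{S} - \frac{r_k - q_k}{T} = \frac{r_k(T - \eta_k S)}{ST}.
\]
Since $|\eta_j - \eta| \leq \delta$ for every $j$, one has $|\eta S - T| \leq \delta S$ and hence $|\eta_k S - T| \leq 2\delta S$, while $S - T \geq (1-\eta-\delta) S$ and $T \geq (\eta - \delta) S$. Using $r_k \leq S$, the two differences above are bounded in absolute value by $\frac{2\delta}{1-\eta-\delta}$ and $\frac{2\delta}{\eta - \delta}$ respectively, both of which tend to $0$ as $\delta \to 0$. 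Choosing $\delta$ small enough in terms of $\epsilon$ and $\eta$ therefore yields (2) and (3) simultaneously.

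There is no serious obstacle here: the whole point is that the proportional choice $q_k \propto r_k$ makes the two target ratios identically correct, and this property is robust under the $O(\delta)$-perturbation required to land in $\mathbb{Q}$.
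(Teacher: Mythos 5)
Your proof is correct and proceeds by the same basic idea as the paper's: choose each $q_k$ to be approximately proportional to $r_k$ with a common constant of proportionality lying in $[1/2,1)$, so that both target ratios equal $r_k/\sum_j r_j$ to leading order, then round to rationals and bound the resulting error. The only cosmetic difference is that you target the factor $1-\eta = 3/4$ in the interior of the allowed interval and allow two-sided rounding, whereas the paper targets the left endpoint $1/2$ and perturbs only upward.
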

\begin{proof}
Choose $0 < \delta \leq \min\left\{\left. \frac{1}{2} r_k \, \right| \, k = 1,\ldots, n\right\}$ 
and for each $k$ choose $q_k \in \left[\frac{1}{2} r_k, \frac{1}{2} r_k + \delta\right) \cap \mathbb{Q}$.  Then
\[
\frac{r_k}{2n\delta + \sum^n_{j=1} r_j} =\frac{\frac{1}{2} r_k}{\sum^n_{j=1} \left(\frac{1}{2}r_k + \delta\right)} \leq \frac{q_k}{\sum^n_{j=1} q_j} \leq \frac{\frac{1}{2} r_k + \delta}{\sum^n_{j=1} \frac{1}{2}r_j} = \frac{r_k + 2\delta}{\sum^n_{j=1} r_j}
\]
and
\begin{align*}
\frac{r_k - 2\delta}{\sum^n_{j=1} r_j} = \frac{r_k - \frac{1}{2}r_k - \delta}{\sum^n_{j=1} r_j - \frac{1}{2}r_j} &\leq \frac{r_k - q_k}{\sum^n_{j=1} r_j - q_j} \\
& \leq \frac{r_k - \frac{1}{2}r_k}{\sum^n_{j=1} \left(r_j - \left(\frac{1}{2}r_j + \delta\right)\right)} \\
& = \frac{r_k}{-2n\delta + \sum^n_{j=1} r_j} 
\end{align*}
for each $k$.  Therefore, since $\{r_k\}_{k\geq 1}$ are fixed, the result follows by choosing $\delta$ sufficiently small.
\end{proof}
\begin{lem}
\label{lem:approx-reals-2}
Given $\epsilon > 0$, $n \in \bN$, and $(p_k)^n_{k=1} \subseteq (0,1)$ such that $\sum^n_{k=1} p_k = 1$, there exists $\{q^{(k)}_j \mid j \in \bN, k =1,\ldots, n \} \subseteq [0,1] \cap \mathbb{Q}$ such that
\[
\sum_{j\geq 1} q^{(k)}_j = p_k \qand
\left| p_k - \frac{q^{(k)}_j}{\sum^n_{k=1} q^{(k)}_j}  \right| < \epsilon
\]
for all $j \in \bN$ and for all $k$.
\end{lem}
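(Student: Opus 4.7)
The plan is to obtain the sequences $\{q^{(k)}_j\}_{j \geq 1}$ by iteratively applying Lemma \ref{lem:approx-reals-1} to extract rational approximations of successively smaller remainders. Set $r_k^{(1)} = p_k$ for each $k$, and fix a summable sequence of positive errors $\epsilon_j$ with $\sum_{j=1}^{\infty} \epsilon_j < \epsilon$; for concreteness I would take $\epsilon_j = \epsilon/2^{j+1}$. The construction proceeds recursively: given $\{r_k^{(j)}\}_{k=1}^n \subseteq (0,1)$, apply Lemma \ref{lem:approx-reals-1} with parameter $\epsilon_j$ to produce rationals $q^{(k)}_j \in [r_k^{(j)}/2, \, r_k^{(j)})$ such that
\[
\left| \frac{r_k^{(j)}}{\sum_{i=1}^n r_i^{(j)}} - \frac{q^{(k)}_j}{\sum_{i=1}^n q^{(i)}_j} \right| < \epsilon_j
\qand
\left| \frac{r_k^{(j)}}{\sum_{i=1}^n r_i^{(j)}} - \frac{r_k^{(j)} - q^{(k)}_j}{\sum_{i=1}^n (r_i^{(j)} - q^{(i)}_j)} \right| < \epsilon_j.
\]
Then define $r_k^{(j+1)} = r_k^{(j)} - q^{(k)}_j \in (0, r_k^{(j)}/2]$, which keeps the remainders strictly positive so that the next application of Lemma \ref{lem:approx-reals-1} is legitimate.

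The first thing to verify is that $\sum_{j \geq 1} q^{(k)}_j = p_k$. Since $r_k^{(j+1)} \leq r_k^{(j)}/2$, we have $r_k^{(j)} \leq p_k / 2^{j-1}$, so $r_k^{(j)} \to 0$. Then the telescoping identity
\[
\sum_{j=1}^J q^{(k)}_j = p_k - r_k^{(J+1)}
\]
yields $\sum_{j \geq 1} q^{(k)}_j = p_k$ upon passing to the limit. Also, each $q^{(k)}_j$ lies in $[0, p_k) \subseteq [0,1) \cap \mathbb{Q}$.

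The second task is to propagate the approximation. Writing $\rho_k^{(j)} = r_k^{(j)}/\sum_{i} r_i^{(j)}$, the second inequality above gives $|\rho_k^{(j+1)} - \rho_k^{(j)}| < \epsilon_j$, while initially $\rho_k^{(1)} = p_k$. By a telescoping estimate,
\[
\left| \rho_k^{(j)} - p_k \right| \leq \sum_{i=1}^{j-1} \epsilon_i.
\]
Combining with the first inequality and the triangle inequality gives
\[
\left| \frac{q^{(k)}_j}{\sum_{i=1}^n q^{(i)}_j} - p_k \right| \leq \left| \frac{q^{(k)}_j}{\sum_i q^{(i)}_j} - \rho_k^{(j)} \right| + \left| \rho_k^{(j)} - p_k \right| < \sum_{i=1}^{j} \epsilon_i < \epsilon
\]
for every $j$ and every $k$, which is exactly the required estimate.

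The main obstacle is bookkeeping: at each recursion step both the ratio $q^{(k)}_j / \sum_i q^{(i)}_j$ and the ratio of the new remainders must stay close to $p_k$, and these errors accumulate. The two-pronged form of Lemma \ref{lem:approx-reals-1}, where the same rational $q_k$ controls both the quotient and the ``remainder quotient'' simultaneously, is exactly what allows the induction to close; choosing the $\epsilon_j$ to form a summable series with total mass less than $\epsilon$ then absorbs all of the accumulated error into a single bound.
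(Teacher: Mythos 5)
Your proof is correct and follows essentially the same strategy as the paper's: recursively apply Lemma \ref{lem:approx-reals-1} to the successive remainders, using the "two-pronged" error control (on both the quotient and the remainder quotient) together with a summable sequence of tolerances to telescope the accumulated error below $\epsilon$. The only differences are cosmetic bookkeeping choices (naming the intermediate ratios $\rho_k^{(j)}$, using $\epsilon_j = \epsilon/2^{j+1}$ rather than $\epsilon/2^{j}$).
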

\begin{proof}
Fix $\epsilon > 0$.  Applying Lemma \ref{lem:approx-reals-1} with $r_k = p_k$ for all $k$, we obtain $q^{(1)}_1, \ldots, q^{(n)}_1 \in \mathbb{Q}$ such that $q^{(k)}_1 \in \left[\frac{1}{2} p_k, p_k\right)$,
\[
\left|p_k - \frac{q^{(k)}_1}{\sum^n_{j=1} q^{(j)}_1}\right| < \frac{1}{2}\epsilon, \qand
\left|p_k - \frac{p_k - q^{(k)}_1}{\sum^n_{j=1} p_j - q^{(j)}_1}\right| < \frac{1}{2}\epsilon.
\]
 
Next, we may apply Lemma \ref{lem:approx-reals-1} with $r_k = p_k - q^{(k)}_1$ for all $k$ to obtain $q^{(1)}_2, \ldots, q^{(n)}_2 \in \mathbb{Q}$ such that 
\[
q^{(k)}_2 \in \left[\frac{1}{2} \left(p_k-q^{(k)}_1\right), p_k-q^{(k)}_1\right)
\]
(so that $q^{(k)}_1 + q^{(k)}_2 \in \left[\frac{3}{4} p_k, p_k\right)$),
\begin{align*}
\left|\frac{p_k - q^{(k)}_1}{\sum^n_{j=1} p_j - q^{(j)}_1} - \frac{q^{(k)}_2}{\sum^n_{j=1} q^{(j)}_2}\right| &< \frac{1}{4}\epsilon, \AND \\
\left|\frac{p_k - q^{(k)}_1}{\sum^n_{j=1} p_j - q^{(j)}_1} - \frac{p_k - q^{(k)}_1- q^{(k)}_2}{\sum^n_{j=1} p_j - q^{(j)}_1- q^{(j)}_2}\right| &< \frac{1}{2}\epsilon.
\end{align*}
Thus
\[
\left|p_k - \frac{q^{(k)}_2}{\sum^n_{j=1} q^{(j)}_2}\right| < \left(\frac{1}{2} + \frac{1}{4}\right)\epsilon
\]
for all $k$.  

By recursively applying Lemma \ref{lem:approx-reals-1} with $r_k = p_k - \sum^\ell_{j=1} q^{(k)}_j$ at the $\ell^{\mathrm{th}}$-step, we will obtain $\{q^{(k)}_j \mid j \in \bN, k =1,\ldots, n \} \subseteq [0,1] \cap \mathbb{Q}$ such that
\[
\sum^\ell_{j=1}q^{(k)}_j \in \left[\sum^\ell_{j=1} \frac{1}{2^j} p_k, p_k\right)\AND
\left|p_k - \frac{q^{(k)}_\ell}{\sum^n_{j=1} q^{(j)}_\ell}\right| < \left(\sum^\ell_{j=1} \frac{1}{2^j}\right)\epsilon
\]
for all $k$ and $\ell \in \bN$.  Hence the result follows.
\end{proof}

\begin{lem}
\label{lem:scalar-case-II-1}
Theorem \ref{thm:multivariate-carpenter} holds in the case that $\fM$ is a II$_1$ factor and $A_k = \alpha_k I_\fM$ for some $\{\alpha_k\}^n_{k=1} \subseteq [0,1]$ with $\sum^n_{k=1} \alpha_k = 1$.
\end{lem}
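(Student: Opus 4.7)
The plan is to slice $I_\fM$ in $\A$ into a countable family of pairwise orthogonal projections along which the scalar ratios $\alpha_k$ can be realized by rational approximations, and then on each slice apply the finite-dimensional Carpenter Theorem (Theorem \ref{thm:matrix-constant-diagonal}) to a matrix subalgebra of $\fM$ whose diagonal lies in $\A$.

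First I would apply Lemma \ref{lem:approx-reals-2} with $p_k = \alpha_k$ to obtain rationals $q_j^{(k)} \in [0,1] \cap \mathbb{Q}$ satisfying $\sum_j q_j^{(k)} = \alpha_k$ and $\left| \alpha_k - q_j^{(k)}/S_j \right| < \epsilon$ for all $j,k$, where $S_j := \sum_k q_j^{(k)}$; note $\sum_j S_j = 1$. Since $\fM$ is type II$_1$ it has no minimal projections, so $\A$ is diffuse, and I can choose pairwise orthogonal projections $\{E_j\}_{j\geq 1} \subseteq \A$ with $\tau(E_j) = S_j$ and $\sum_j E_j = I_\fM$. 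For each $j$ fix $M_j \in \bN$ with $a_j^{(k)} := M_j q_j^{(k)}/S_j \in \bN \cup \{0\}$ for all $k$; then $\sum_k a_j^{(k)} = M_j$.

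Next, inside the II$_1$ factor $E_j \fM E_j$, I would select pairwise orthogonal equivalent projections $\{Q'_{j,\ell}\}_{\ell=1}^{M_j} \subseteq \A E_j$ of trace $S_j / M_j$ summing to $E_j$; since $\fM$ is a factor these are mutually Murray--von Neumann equivalent, so they extend to a system of matrix units generating a copy of $\M_{M_j}(\bC) \subseteq E_j \fM E_j$ whose diagonal subalgebra sits inside $\A E_j$. Partitioning the $Q'_{j,\ell}$ into $n$ blocks of sizes $a_j^{(k)}$ yields pairwise orthogonal diagonal projections $Q_{j,k}$ with $\tau(Q_{j,k}) = q_j^{(k)}$. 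Applying Theorem \ref{thm:matrix-constant-diagonal} within $\M_{M_j}(\bC)$ produces a unitary $U_j \in \M_{M_j}(\bC)$ such that $P_k^{(j)} := U_j^* Q_{j,k} U_j$ has every diagonal entry in $\M_{M_j}(\bC)$ equal to $q_j^{(k)}/S_j$. A direct calculation using that $E_\A$ is an $\A$-bimodule map, together with the fact that the orthogonality $Q'_{j,p} \perp Q'_{j,q}$ in $\A$ forces $E_\A(e_{pq}^{(j)}) = 0$ for $p \neq q$, shows $E_\A$ restricted to $\M_{M_j}(\bC)$ is the standard diagonal truncation, and hence $E_\A(P_k^{(j)}) = (q_j^{(k)}/S_j)\, E_j$.

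Setting $P_k := \sum_j P_k^{(j)}$ (SOT), the family $\{P_k\}_{k=1}^n$ consists of pairwise orthogonal projections summing to $I_\fM$, and $\sigma$-additivity of the trace gives $\tau(P_k) = \sum_j q_j^{(k)} = \alpha_k = \tau(A_k)$. Normality of $E_\A$ then yields
\[
\alpha_k I_\fM - E_\A(P_k) = \sum_j \bigl( \alpha_k - q_j^{(k)}/S_j \bigr)\, E_j,
\]
whose operator norm is bounded by $\sup_j |\alpha_k - q_j^{(k)}/S_j| < \epsilon$. The main obstacle is the \emph{simultaneous} rational approximation of the ratios $\alpha_k$ at every level $j$ while keeping $\sum_j q_j^{(k)}$ exactly equal to $\alpha_k$; this is precisely what Lemma \ref{lem:approx-reals-2} is engineered to deliver, after which the remainder of the argument is a clean blockwise reduction to the finite-matrix Carpenter Theorem.
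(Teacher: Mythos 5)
Your proposal is correct and follows essentially the same route as the paper's proof: obtain the simultaneous rational refinement from Lemma \ref{lem:approx-reals-2}, slice the identity into projections in $\A$ with traces matching the $S_j$, embed a matrix algebra with diagonal inside $\A$ in each slice, and apply Theorem \ref{thm:matrix-constant-diagonal} blockwise. The only microscopic omission is the reduction to $\alpha_k \neq 0$ (Lemma \ref{lem:approx-reals-2} requires $p_k \in (0,1)$), which the paper dispenses with by a ``without loss of generality'' at the outset.
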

\begin{proof}
Without loss of generality, $\alpha_k \neq 0$ for all $k$.  
By Lemma \ref{lem:approx-reals-2} there exists  $\{q^{(k)}_j \mid j \in \bN, k =1,\ldots, n \} \subseteq [0,1] \cap \mathbb{Q}$ such that
\[
\sum_{j\geq 1} q^{(k)}_j = \alpha_k
\]
for all $k$ and, if
\[
\gamma_{j,k}  = \frac{q^{(k)}_j}{\sum^n_{k=1} q^{(k)}_j}
\]
then $\left| \alpha_k - \gamma_{j,k}  \right| < \epsilon$ for all $j \in \bN$ and $k \in \{1,\ldots, n\}$. 

Since $\fM$ is a type II$_1$ factor, there exists pairwise orthogonal projections 
\[
\left\{ \left. Q^{(k)}_j \right| j \in \bN, k =1,\ldots, n \right\} \subseteq \A
\]
such that $\tau\left(Q^{(k)}_j\right) = q^{(k)}_j$.  For each $k$ let
\[
P_k = \sum_{j\geq 1} Q^{(k)}_j
\]
(the sum in the strong operator topology) and for each $j \in \bN$ let
\[
Q_j = \sum^n_{k=1} Q^{(k)}_j.
\]
Note the $P_k$s and $Q_j$s are clearly projections in $\fM$ such that $\{P_k\}^n_{k=1}$ is a collection of pairwise orthogonal projections with $\sum^n_{k=1} P_k = I_\fM$ and $\tau(P_k) = \alpha_k$.  Furthermore, notice that $Q_{j_1}Q_{j_2} = 0$ if $j_1 \neq j_2$, $\sum_{j\geq 1} Q_j = I_\fM$,
and
\[
Q_jP_k = P_k Q_j = Q_jP_kQ_j = Q^{(k)}_j
\]
for each $j,k$.
 
For a fixed $j \in \bN$, consider the II$_1$ factor $Q_j \fM Q_j$.  Notice there exists $a_{j,1}, a_{j,2}, \ldots, a_{j,n}, b_j \in \bN \cup \{0\}$ with $b_j \neq 0$  such that
\[
\sum^n_{k=1} a_{j,k} = b_j \qand \tau_{Q_j \fM Q_j} \left(Q^{(k)}_j\right) = \gamma_{j,k} = \frac{a_{j,k}}{b_j}
\]
for all $k$. Since $Q_j = \sum^n_{k=1} Q^{(k)}_j$ with $Q^{(k)}_j \in \A$, by dividing $Q^{(k)}_j$ into $a_{j,k}$ mutually orthogonal projections each of trace $\frac{1}{b_j}$ and by using the property of type II$_1$ factors that projections of equal trace are Murray-von Neumann equivalent, we can construct a unital inclusion of $\fM_{b_j}(\bC)$ inside $Q_j \fM Q_j$ such that $Q_jP_kQ_j$ corresponds to a diagonal projection matrix with $1$  appearing on the diagonal precisely $a_{j,k}$ times and each diagonal projection lies in $\A$.  To simplify notation, let $R_{j,1}, \ldots, R_{j,b_j} $ denote the pairwise orthogonal projections in $Q_j\A Q_j$ obtained from the diagonal matrix units of $\M_{b_j}(\bC)$.  Therefore, by Theorem \ref{thm:matrix-constant-diagonal} there exists a unitary $U_j \in Q_j \fM Q_j$ such that
\[
R_{j,\ell}U_j^* Q_jP_kQ_jU_jR_{j,\ell} = \gamma_{j,k}  R_{j,\ell}
\]
for all $\ell, k$.
 
Let 
\[
U = \sum_{j\geq 1} U_j \in \fM
\]
(where the sum is in the strong operator topology).  It is clear that $U$ exists and is a unitary operator in $\fM$.  Furthermore, for all $j, \ell, k$, 
\[
\left\|\alpha_k R_{j,\ell} - R_{j,\ell} U^*P_kUR_{j,\ell}\right\| = |\alpha_k -\gamma_{j,k}| < \epsilon.
\]
 
Notice that $\{ R_{j,\ell} \mid j \in \bN, \ell = 1, \ldots, b_j\}$ are pairwise orthogonal projections in $\A$ such that 
\[
\sum_{j\geq 1}\sum^{b_j}_{\ell=1} R_{j,\ell} = I_\fM.
\]
Hence 
\[
\left\|\alpha_k I_\fM - E_\A(U^*P_kU)\right\| = \sup_{j \in \bN, \ell \in \{1,\ldots, b_j\}}\left\|\tau(P_k) R_{j,\ell} - R_{j,\ell} U^*P_kUR_{j,\ell}\right\| < \epsilon.
\]
Thus the proof is complete as $\{U^*P_kU\}^n_{k=1}$ is an appropriate set of projections.
\end{proof}

\begin{thm}
Theorem \ref{thm:multivariate-carpenter} holds in the case $\fM$ is a II$_1$ factor.
\end{thm}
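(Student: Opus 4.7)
The plan is to reduce to the scalar case in Lemma~\ref{lem:scalar-case-II-1} by cutting $\fM$ along a common spectral partition of the commuting elements $A_1,\dots,A_n$. Identifying $\A$ with $L^\infty(X,\mu)$, where $\mu$ is obtained by restricting $\tau$, the tuple $(A_1,\dots,A_n)$ becomes a bounded Borel map $X \to [0,1]^n$. Given $\epsilon>0$, I would cover its (bounded) image by finitely many Borel sets of diameter less than $\epsilon/3$ and pull them back to produce a finite family of pairwise orthogonal projections $\{F_\ell\}_{\ell=1}^m \subseteq \A$ with $\sum_\ell F_\ell = I_\fM$ such that each $A_k$ has oscillation at most $\epsilon/3$ on each $F_\ell$.

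The critical choice is to take $\beta_{k,\ell} := \tau(F_\ell)^{-1}\tau(A_k F_\ell)$, the trace-weighted average of $A_k$ on $F_\ell$, so that the simple approximant $\widetilde A_k := \sum_\ell \beta_{k,\ell}F_\ell$ satisfies $\|A_k - \widetilde A_k\| < \epsilon/3$, $\tau(\widetilde A_k) = \tau(A_k)$, and $\sum_{k=1}^n \beta_{k,\ell} = 1$ for every $\ell$ (the last from $\sum_k A_k = I_\fM$). Using the average rather than a pointwise value is what makes the trace land exactly on $\tau(A_k)$ further down the construction.

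Next, for each $\ell$ with $F_\ell \ne 0$, the compression $F_\ell \fM F_\ell$ is again a type II$_1$ factor, $\A F_\ell$ is a MASA in it, and the restriction of $E_\A$ is the conditional expectation onto $\A F_\ell$. I would apply Lemma~\ref{lem:scalar-case-II-1} inside $F_\ell \fM F_\ell$ to the scalars $\{\beta_{k,\ell}\}_{k=1}^n$, obtaining pairwise orthogonal projections $\{P_{k,\ell}\}_{k=1}^n$ with $\sum_k P_{k,\ell} = F_\ell$, $\tau(P_{k,\ell}) = \beta_{k,\ell}\tau(F_\ell)$, and $\|\beta_{k,\ell}F_\ell - E_\A(P_{k,\ell})\| < \epsilon/3$. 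Setting $P_k := \sum_\ell P_{k,\ell}$ produces pairwise orthogonal projections in $\fM$ summing to $I_\fM$; mutual orthogonality of the central supports $F_\ell$ upgrades the per-block estimates into $\|E_\A(P_k) - \widetilde A_k\| < \epsilon/3$, hence $\|E_\A(P_k) - A_k\| < \epsilon$, while additivity of the trace gives $\tau(P_k) = \sum_\ell \beta_{k,\ell}\tau(F_\ell) = \tau(\widetilde A_k) = \tau(A_k)$.

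The main technical obstacle is arranging \emph{exact} equality of traces rather than merely approximate equality. Defining $\beta_{k,\ell}$ as trace-weighted averages (instead of pointwise representatives) is what simultaneously preserves the partition-of-unity relation $\sum_k \beta_{k,\ell} = 1$ and forces $\tau(\widetilde A_k) = \tau(A_k)$; the small oscillation hypothesis on each $F_\ell$ then still provides the uniform approximation needed to invoke the scalar case.
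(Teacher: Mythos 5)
Your proposal is correct and follows essentially the same route as the paper: decompose the identity into finitely many projections in $\A$ on which the $A_k$'s have small oscillation, take trace-weighted averages (which is exactly what forces $\sum_k \beta_{k,\ell}=1$ and $\tau(\widetilde A_k)=\tau(A_k)$ simultaneously, as you note), and then apply the scalar-case Lemma~\ref{lem:scalar-case-II-1} on each corner $F_\ell\fM F_\ell$ before summing. The only cosmetic difference is that the paper first passes to a diffuse, separably generated abelian subalgebra $\A_0\simeq L^\infty[0,1]$ of $\A$ before doing the averaging, which is a harmless precaution when $\A$ is not separable; your argument can be run inside such an $\A_0$ verbatim.
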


\begin{proof}
Using the fact that $A_1,\ldots, A_n$ are contained within a diffuse, separably generated, abelian subalgebra $\A_0$ of $\A$, that $\A_0\simeq L_\infty[0,1]$, and by averaging functions over a cover of $[0,1]$ on which the essential diameter of functions does not vary substantially, there exists an $\ell \in \bN$, $\{\alpha_{k,j} \, \mid \, k \in \{1,\ldots, n\}, j \in \{1,\ldots, \ell\}\} \subseteq [0,1]$, and pairwise orthogonal projections $\{Q_j\}^\ell_{j=1} \subseteq \A_0 \subseteq \A$ such that $\sum^\ell_{j=1} Q_j = I_\fM$,
\[
\sum^n_{k=1} \alpha_{k,j} = 1, \quad
\sum^\ell_{j=1} \alpha_{k,j} \tau(Q_j) = \tau(A_k), \qand
\left\|A_k - \sum^\ell_{j=1} \alpha_{k,j} Q_j\right\| < \epsilon
\]
for all $k$.

Fix $j \in \{1, \ldots, \ell\}$.  By applying Lemma \ref{lem:scalar-case-II-1} to the type II$_1$ factor $Q_j\fM Q_j$ togehter with the MASA $Q_j\A Q_j$, there exists pairwise orthogonal projections $\left\{P^{(j)}_k\right\}^n_{k=1} \subseteq Q_j \fM Q_j$ such that $\sum^n_{k=1} P^{(j)}_k = Q_j$, $\tau\left(P^{(j)}_k \right) = \alpha_{k,j} \tau(Q_j)$, and
\[
\left\|\alpha_{k,j} Q_j - E_{Q_j\A Q_j}\left(P^{(j)}_k\right) \right\| < \epsilon \text{ for all }k
\]

For each $k$ let
\[
P_k = \sum^\ell_{j=1} P^{(j)}_k.
\]
By construction, it is clear that $\{P_k\}^n_{k=1}$ is a collection of pairwise orthogonal projections such that $\sum^n_{k=1} P_k = \sum^\ell_{j=1} Q_j = I_\fM$ and
\[
\tau(P_k) = \sum^\ell_{j=1} \alpha_{k,j} \tau(Q_j) = \tau(A_k).
\]
Finally
\[
\begin{array}{l}
\left\|E_\A(P_k)- \sum^\ell_{j=1} \alpha_{k,j} Q_j\right\| \\ 
=  \left\|\sum^\ell_{j=1} E_{Q_j\A Q_j}(Q_jP_kQ_j)- \sum^\ell_{j=1} \alpha_{k,j} Q_j\right\|\\
 = \max_{1\leq j \leq \ell}\left\{\left\|E_{Q_j\A Q_j}\left(P^{(j)}_k\right)- \alpha_{k,j} Q_j\right\| \right\} < \epsilon\\
 \end{array} 
\]
for all $k$.  Thus 
\[
\left\|E_\A(P_k) - A_k\right\| < 2\epsilon
\]
for all $k$ as desired.
\end{proof}

For certain operators in II$_1$ factors, equality can be obtained in Theorem \ref{thm:multivariate-carpenter}. This is seen in the next result, which can be obtained from a careful analysis of the above proof.

\begin{cor}
Let $\fM$ be a type II$_1$ factor and let $\A$ be a MASA in $\fM$ with corresponding normal conditional expectation $E_\A : \fM \to \A$.  Let $\{Q_j\}^m_{j=1}$ be pairwise orthogonal projections in $\A$ such that $\sum^m_{j=1} Q_j = I_\fM$ and let $\{\{\alpha_{j,k}\}^m_{j=1}\}^n_{k=1} \subseteq [0,1] \cap \mathbb{Q}$ satisfy
\[
\sum^n_{k=1} \alpha_{j,k} = 1
\]
for all $j$. Then there exists pairwise orthogonal projections $\{P_k\}^n_{k=1} \subseteq \fM$ such that $\sum^n_{k=1} P_k = I_\fM$ and
\[
E_\A(P_k) = \sum^m_{j=1} \alpha_{j,k} Q_j
\]
for all $k$.
\end{cor}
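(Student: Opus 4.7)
The plan is to observe that the two sources of approximation in the proof of the II$_1$ case of Theorem~\ref{thm:multivariate-carpenter} disappear under the present hypotheses. The cover-and-average step at the start of that proof is not needed because the diagonal decomposition $I_\fM = \sum_j Q_j$ is already prescribed, and the rational approximation supplied by Lemma~\ref{lem:approx-reals-2} is not needed because the $\alpha_{j,k}$ are already rational. What remains is an \emph{exact} version of the scalar case of Lemma~\ref{lem:scalar-case-II-1}, carried out inside each block $Q_j \fM Q_j$ and then assembled.

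For the reduction, fix $j$ with $Q_j \neq 0$: the compression $Q_j\fM Q_j$ is a II$_1$ factor with MASA $Q_j\A Q_j$, and it suffices to produce pairwise orthogonal projections $P^{(j)}_1, \ldots, P^{(j)}_n$ summing to $Q_j$ with $E_{Q_j\A Q_j}(P^{(j)}_k) = \alpha_{j,k} Q_j$ on the nose. Since each $Q_j$ lies in $\A$, one has $E_\A(x) \in Q_j\A Q_j$ for $x \in Q_j\fM Q_j$, and uniqueness of the trace-preserving conditional expectation then gives $E_\A(P^{(j)}_k) = E_{Q_j\A Q_j}(P^{(j)}_k)$. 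Therefore $P_k := \sum_j P^{(j)}_k$ satisfies $E_\A(P_k) = \sum_j \alpha_{j,k} Q_j$, and pairwise orthogonality together with $\sum_k P_k = I_\fM$ are automatic from the local versions.

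For the scalar rational case, fix a II$_1$ factor $\fN$ with MASA $\B$ and rationals $\alpha_k = a_k/N$ with $\sum_k a_k = N$. Since any MASA in a II$_1$ factor is diffuse, there exist pairwise orthogonal projections $R_1, \ldots, R_N \in \B$ of equal trace $1/N$ summing to $I_\fN$. Equal traces imply Murray--von Neumann equivalence in $\fN$, so partial isometries joining the $R_\ell$ generate a unital copy of $\M_N(\bC)$ inside $\fN$ whose diagonal is $\spn\{R_1,\ldots,R_N\} \subseteq \B$. Partition $\{1,\ldots,N\}$ into blocks of sizes $a_1, \ldots, a_n$ and let $\tilde Q_k \in \M_N(\bC)$ be the diagonal projection obtained by summing the $R_\ell$ in the $k$-th block. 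The main obstacle is precisely the need for a \emph{single} unitary $V$ that produces the correct constant diagonal for every $\tilde Q_k$ simultaneously; however, this is immediate from the Fourier structure of the unitary in the proof of Theorem~\ref{thm:matrix-constant-diagonal}, because $W = I$ in that proof whenever the target operators are already diagonal, so the same $V$ satisfies $R_\ell V^* B V R_\ell = \tau(B) R_\ell$ for every diagonal $B \in \M_N(\bC)$. Setting $P_k := V^* \tilde Q_k V$ then yields
\begin{equation*}
E_\B(P_k) \;=\; \sum_{\ell=1}^N E_\B(R_\ell P_k R_\ell) \;=\; \sum_{\ell=1}^N \alpha_k R_\ell \;=\; \alpha_k I_\fN
\end{equation*}
by the compression identity used in Lemma~\ref{lem:scalar-case-II-1}, now with exact equality. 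Applying this inside each $Q_j\fM Q_j$ (with $\alpha_{j,k}$ in place of $\alpha_k$) and combining with the reduction step completes the argument.
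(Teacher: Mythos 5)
Your proof is correct and follows essentially the same route as the paper: compress by each $Q_j$ to reduce to the scalar case $m=1$, then build a unital copy of $\M_N(\bC)$ inside the factor with its diagonal sitting in the MASA and apply Theorem~\ref{thm:matrix-constant-diagonal} to get the constant-diagonal unitary. The paper leaves the scalar step implicit ("a careful examination of the proof of Lemma~\ref{lem:scalar-case-II-1}"), whereas you spell it out by noting that rationality lets one bypass the infinite approximation of Lemma~\ref{lem:approx-reals-2} entirely and go directly to a single $\M_N(\bC)$ block; this is precisely what that careful examination amounts to.
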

\begin{proof}
By compressing $\fM$ and $\A$ by each $Q_j$ separately, we may assume that $m = 1$.  A careful examination of the proof of Lemma \ref{lem:scalar-case-II-1} reveals that if the trace of each projection in the hypothesis is rational, then equality may be obtained in the conclusion. Hence the condition that each $\alpha_{j,k}$ is rational implies the result.
\end{proof}

To further examine when Theorem \ref{thm:multivariate-carpenter} holds with equality, we require the following three definitions.

\begin{defn}
Let $\fM$ be a type II$_1$ factor and let $\A$ be a MASA in $\fM$.  The normalizer of $\A$ in $\fM$, denoted by $\mathcal{N}_\A$, is the subgroup of the unitary group $\U(\fM)$ of $\fM$ defined by
\[
\mathcal{N}_\A := \{U \in \U(\fM) \, \mid \, U^* \A U = \A\}.
\]
The MASA $\A$ is said to be semiregular if $(\mathcal{N}_\A)'' \cap \fM$ is a factor.  If, in addition, $(\mathcal{N}_\A)'' \cap \fM = \fM$, then $\A$ is said to be regular (or Cartan).
\end{defn}

\begin{defn}[\cite{P1983}]
Let $\fM$ be a II$_1$ factor with trace $\tau$ and let $\fN_1$ and $\fN_2$ be von Neumann algebras of $\fM$.  If $E_{\fN_k} : \fM \to \fN_k$ are the normal conditional expectations, it is said that $\fN_1$ and $\fN_2$ are orthogonal if any of the following equivalent conditions hold:
\begin{enumerate}
\item $\tau(T_1T_2) = 0$ for all $T_1 \in \fN_1$, $T_2 \in \fN_2$ with $\tau(T_k) = 0$.
\item $\tau(T_1T_2) = \tau(T_1)\tau(T_2)$ for all $T_1 \in \fN_1$ and $T_2 \in \fN_2$.
\item $E_{\fN_1}(T_2) = \tau(T_2)$ for all $T_2 \in \fN_2$.
\end{enumerate}
\end{defn}

\begin{defn}
Let $\fM$ be a type II$_1$ factor.  A MASA $\A \subseteq \fM$ is said to be totally complementable if for every projection $P \in \A$ the MASA $P\A P$ of $P\fM P$ admits a diffuse orthogonal abelian subalgebra.
\end{defn}
\begin{rem}
It is not difficult to see that the Cartan MASA in the hyperfinite II$_1$ factor is totally complementable and thus, by \cite{P1981}*{Proposition 3.6}, so is every semiregular MASA in a type II$_1$ factor with separable predual.
\end{rem}
With the above definitions in hand, we are now able to establish a multivariate generalization of \cite{AM2009}*{Theorem 3.2}. We note that the proof is virtually identical.
\begin{thm}
Let $\fM$ be a type II$_1$ factor and let $\A$ be a totally complementable MASA in $\fM$ with corresponding normal conditional expectation $E_\A : \fM \to \A$.  Let $\{A_k\}^n_{k=1}$ be a partition of unity in $\A$ such that each $\sigma(A_k)$ contains a finite number of points.
Then there exists a collection of pairwise orthogonal projections $\{P_k\}^n_{k=1} \subseteq \fM$ such that $\sum^n_{k=1} P_k = I_\fM$, and
and $E_\A(P_k)=A_k$ for all $k$.
\end{thm}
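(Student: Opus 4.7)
The plan is to reduce to the case where each $A_k$ is a scalar multiple of a corner projection, and then to exploit total complementability to solve that scalar case \emph{exactly}. Since each $A_k$ has finite spectrum and the family $\{A_k\}$ commutes inside $\A$, their joint spectral projections generate a finite dimensional abelian $*$-subalgebra of $\A$. Let $\{Q_j\}^m_{j=1} \subseteq \A$ denote its minimal projections. Then $\{Q_j\}$ is a partition of unity in $\A$ and there exist scalars $\alpha_{k,j} \in [0,1]$ such that
\[
A_k \;=\; \sum^m_{j=1} \alpha_{k,j}\, Q_j, \qquad \sum^n_{k=1} \alpha_{k,j} \;=\; 1 \quad \text{for each } j.
\]
It will suffice to construct, for each fixed $j$, pairwise orthogonal projections $\{P^{(j)}_k\}^n_{k=1} \subseteq Q_j \fM Q_j$ summing to $Q_j$ with $E_{Q_j \A Q_j}\bigl(P^{(j)}_k\bigr) = \alpha_{k,j} Q_j$; the projections $P_k := \sum^m_{j=1} P^{(j)}_k$ will then be pairwise orthogonal (since distinct corners $Q_j \fM Q_j$ are orthogonal and the $P^{(j)}_k$ are orthogonal within each corner), sum to $\sum^m_{j=1} Q_j = I_\fM$, and satisfy $E_\A(P_k) = \sum^m_{j=1} E_{Q_j\A Q_j}(P^{(j)}_k) = A_k$.

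To carry out the scalar step, fix $j$. By total complementability (which clearly passes to every corner by a projection in $\A$), the corner MASA $Q_j \A Q_j \subseteq Q_j \fM Q_j$ admits a diffuse orthogonal abelian subalgebra $\B$. Since $\B$ is diffuse and $\sum^n_{k=1} \alpha_{k,j} = 1$, I can choose pairwise orthogonal projections $R_1, \ldots, R_n \in \B$ summing to $Q_j$ with $\tau_{Q_j \fM Q_j}(R_k) = \alpha_{k,j}$. The crucial point is then condition (3) in the definition of orthogonality: applied to $\fN_1 = Q_j\A Q_j$ and $\fN_2 = \B$ inside the II$_1$ factor $Q_j \fM Q_j$, it yields
\[
E_{Q_j \A Q_j}(R_k) \;=\; \tau_{Q_j \fM Q_j}(R_k)\, Q_j \;=\; \alpha_{k,j}\, Q_j.
\]
Setting $P^{(j)}_k := R_k$ completes the construction.

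The main point — and the only place where total complementability is really used — is the exact scalar identity $E_{Q_j \A Q_j}(R_k) = \alpha_{k,j} Q_j$. In the proof of Lemma \ref{lem:scalar-case-II-1} one could only achieve this approximately, by embedding matrix algebras and invoking Theorem \ref{thm:matrix-constant-diagonal}, which forced rational approximations to the target traces. Here the diffuse orthogonal subalgebra $\B$ bypasses this rationality obstruction entirely and produces the required projections exactly. Apart from this step, the argument is essentially bookkeeping: assembling the pieces from each corner and verifying orthogonality, summability, and the identity $E_\A(P_k) = A_k$, which all follow from $Q_j \in \A$ and $P^{(j)}_k \in Q_j \fM Q_j$.
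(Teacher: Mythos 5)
Your proposal is correct and matches the paper's argument step for step: decompose the $A_k$ via their joint spectral projections $Q_j$, invoke total complementability (which by definition already applies to each corner $Q_j\fM Q_j$, so your parenthetical remark is unnecessary) to obtain a diffuse orthogonal abelian subalgebra, choose projections there with the prescribed traces, and use condition (3) of orthogonality to get $E_{Q_j\A Q_j}(R_k)=\alpha_{k,j}Q_j$ exactly before summing over $j$. This is the paper's proof; no further comparison is needed.
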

\begin{proof}
By the assumptions on the $A_k$'s, there exists a set of pairwise orthogonal, non-zero projections $\{Q_j\}^m_{j=1} \subseteq \fM$ summing to the identity and scalars 
\[
\{\alpha_{k,j} \, \mid \, k =1,\ldots, n, j = 1,\ldots, m\}\subseteq [0,1]
\]
satisfying $A_k = \sum^m_{j=1} \alpha_{k,j}Q_j$ for all $k$ and $\sum^n_{k=1} \alpha_{k,j} = 1$ for all $j$.
 
Since $\A$ is totally complementable in $\fM$, for each $j$ there exists a diffuse abelian von Neumann subalgebra $\B_j$ of the type II$_1$ factor $Q_j \fM Q_j$ such that $Q_j\A Q_j$ and $\B_j$ are orthogonal.  Since $\B_j$ is diffuse, there exists pariwise orthogonal projections $\{P_{k,j}\}^n_{k=1}$ summing to $Q_j$ such that $\tau_{Q_j\fM Q_j}(P_{k,j}) = \alpha_{k,j}$ for all $k$.  Therefore, since $Q_j\A Q_j$ and $\B_j$ are orthogonal, 
\[
E_{Q_j\A Q_j}(P_{k,j}) =\tau_{Q_j\fM Q_j}(P_{k,j})Q_j = \alpha_{k,j}Q_j.
\]
 
For each $k$, let $P_k = \sum^m_{j=1} P_{k,j}$.  Therefore $\{P_k\}^n_{k=1}$ is a collection of pariwise orthogonal projections summing to the identity such that
\[
E_\A(P_k) = \sum^m_{j=1} E_{Q_j\A Q_j}(P_{k,j}) = \sum^m_{j=1} \alpha_{k,j}Q_j = A_k. \qedhere
\]
\end{proof}

Of course, approximate results yield precise results in the setting of an ultraproduct.

\begin{thm}
Let $\fM$ be a type II$_1$ factor, let $\A$ be a MASA is $\fM$, let $\omega$ be a free ultrafilter on $\bN$, and let $\fM^\omega$ and $\A^\omega$ be the ultraproducts of $\fM$ and $\A$ respectively.  If $\{A_k\}^n_{k=1}$ is a partition of unity in $\A^\omega$ and $E_{\A^\omega} : \fM^\omega \to \A^\omega$ is the normal conditional expectation of $\fM^\omega$ onto $\A^\omega$, then there exists pairwise orthogonal projections $\{P_k\}^n_{k=1} \subseteq \fM^\omega$ such that 
\[
E_{\A^\omega}(P_k) = A_k
\]
for all $k$.
\end{thm}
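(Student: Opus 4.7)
The plan is to reduce this exact ultraproduct statement to the approximate version (Theorem \ref{thm:multivariate-carpenter}) via a diagonal lifting argument: lift the partition of unity in $\A^\omega$ to a sequence of genuine partitions of unity in $\A$, apply Theorem \ref{thm:multivariate-carpenter} at each level with error tending to zero, and take the ultralimit of the resulting projections.

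First, I would choose positive contraction representatives $(A_k^{(m)})_m \subseteq \A$ for each $A_k$, which is possible by applying the functional calculus $x \mapsto \max(0, \min(x,1))$ to arbitrary representatives. Since $\sum_k A_k = I$ in $\A^\omega$, we have $\|\sum_k A_k^{(m)} - I_\fM\|_2 \to 0$ along $\omega$. To correct these to genuine partitions of unity at each finite level, I would use the identification $\A \cong L^\infty(X,\mu)$ (valid since MASAs in type II$_1$ factors are automatically diffuse) and apply pointwise the Lipschitz retraction of $[0,1]^n$ onto the standard $(n-1)$-simplex to the tuple $(A_1^{(m)}(\cdot),\ldots,A_n^{(m)}(\cdot))$, producing a genuine partition of unity $\{\tilde A_k^{(m)}\}_{k=1}^n$ in $\A$. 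Since this retraction is the identity on the simplex and is Lipschitz, and since $\sum_k A_k^{(m)}$ converges to $I_\fM$ in $\|\cdot\|_2$ along $\omega$, the new sequences still represent the original $A_k$.

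Next, for each $m$ I would apply Theorem \ref{thm:multivariate-carpenter} to $\{\tilde A_k^{(m)}\}_{k=1}^n$ in $\fM$ with $\epsilon = 1/m$, obtaining pairwise orthogonal projections $\{P_k^{(m)}\}_{k=1}^n \subseteq \fM$ with $\sum_k P_k^{(m)} = I_\fM$ and $\|E_\A(P_k^{(m)}) - \tilde A_k^{(m)}\| < 1/m$. Setting $P_k = [(P_k^{(m)})]_\omega \in \fM^\omega$, the algebraic properties (projection, pairwise orthogonality, summing to $I_{\fM^\omega}$) are inherited coordinate-wise. Since $E_{\A^\omega}$ acts by $E_{\A^\omega}([(x_m)]_\omega) = [(E_\A(x_m))]_\omega$ (the standard construction of the normal conditional expectation on the tracial ultraproduct, using that $E_\A$ is $\|\cdot\|_2$-contractive and trace-preserving), we compute
\[
\|E_{\A^\omega}(P_k) - A_k\|_2 = \lim_\omega \|E_\A(P_k^{(m)}) - \tilde A_k^{(m)}\|_2 \leq \lim_\omega \tfrac{1}{m} = 0,
\]
giving $E_{\A^\omega}(P_k) = A_k$ as required.

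The principal technical point is the lifting of the partition of unity in the first step, where the abelian structure of $\A$ and a measurable retraction onto the simplex provide the cleanest route. Once exact partitions of unity representing the given one are in hand, the remainder is a routine assembly of the finite-level theorem with the ultraproduct construction.
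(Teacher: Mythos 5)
Your proof is correct and has the same overall architecture as the paper's (lift the partition of unity to genuine partitions of unity at each finite level, apply Theorem \ref{thm:multivariate-carpenter} with shrinking $\epsilon$, and pass to the ultraproduct), but the lifting step is carried out by a genuinely different mechanism. The paper proceeds \emph{sequentially}: it fixes a positive contraction representative of $A_1$, then corrects the representative of $A_2$ by functional calculus with the single-variable function $f(x)=\min(x,1)$ applied to the running sum, iterates through $A_3,\dots,A_{n-1}$, and finally sets the $n$-th representative to be $I$ minus the running sum. You instead perform a \emph{simultaneous} correction: apply the nearest-point (hence $1$-Lipschitz) retraction $r\colon [0,1]^n\to\Delta^{n-1}$ through the joint functional calculus in the abelian algebra $\A$, then use $\operatorname{dist}(a,\Delta^{n-1})\le|\sum_j a_j-1|$ for $a\in[0,1]^n$ to conclude $\|\tilde A_k^{(m)}-A_k^{(m)}\|_2\le\|\sum_j A_j^{(m)}-I\|_2\to 0$. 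Your approach is more symmetric in the indices $k$ and conceptually cleaner, at the small cost of invoking an $n$-variable functional calculus (or the $L^\infty$ picture), whereas the paper stays within single-variable continuous functional calculus. One small remark: the sentence ``since this retraction is the identity on the simplex and is Lipschitz\dots the new sequences still represent the original $A_k$'' skips the needed quantitative link, namely the bound on $\operatorname{dist}(a,\Delta^{n-1})$ in terms of $|\sum_j a_j-1|$; also, the $L^\infty$ identification holds for any abelian von Neumann algebra with separable predual and does not actually rely on the diffuseness of the MASA. Neither point affects the validity of the argument.
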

\begin{proof}
First choose a representative $(A_{1,j})_{j\geq 1} \in \A^\omega$ of $A_1$ such that $0 \leq A_{1,j} \leq I_\fM$ for all $j$.  Next, choose a representative $(A'_{2,j})_{j\geq 1} \in \A^\omega$ of $A_2$ such that $0 \leq A'_{2,j} \leq I_\fM$ for all $j$.  We desire to modify the representative $(A'_{2,j})_{j\geq 1}$ so that $0 \leq A_{1,j} + A'_{2,j} \leq I_\fM$ for all $j$.  Consider the function $f \in C([0,\infty))$ defined by
\[
f(x) = \left\{
\begin{array}{ll}
x & \mbox{if } 0 \leq x \leq 1  \\
1 & \mbox{if } x \geq 1
\end{array} \right. .
\]
For each $j$ define
\[
A_{2,j} = f(A_{1,j} + A'_{2,j}) - A_{1,j} \in \fM.
\]
Hence it is clear that $0 \leq A_{2,j} \leq I_\fM$ for all $j$, 
\[
A_{1,j} + A_{2,j} = f(A_{1,j} + A'_{2,j}) \leq I_\fM
\]
for all $j$, and
\begin{align*}
(A_{2,j})_{j\geq 1} &= (f(A_{1,j} + A'_{2,j}))_{j\geq 1} - (A_{1,j})_{j\geq 1} \\
&= f(A_1 + A_2) - A_1 = A_1 + A_2 - A_1 = A_2.
\end{align*}

For $k = 2,\ldots, n-1$, by recursively selecting a representative $(A'_{k,j})_{j\geq 1} \in \A^\omega$ of $A_k$ such that $0 \leq A'_{k,j} \leq I_\fM$ for all $j \in \bN$ and defining
\[
A_{k,j} = f(A_{1,j} + A_{2,j} + \cdots + A_{k-1, j} + A'_{k,j}) - A_{1,j} - A_{2,j} - \cdots - A_{k-1, j} \in \fM
\]
for all $j$, we obtain representatives $(A_{k,j})_{j\geq 1} \in \A^\omega$ of $A_k$ such that $0 \leq A_{k,j} \leq I_\fM$ and
\[
0 \leq \sum^{n-1}_{k=1} A_{k,j} \leq I_\fM
\]
for all $j$.  Therefore, if $A_{n,j} = I_\fM - \sum^{n-1}_{k=1} A_{k,j}$ for all $j \in \bN$, then $0 \leq A_{n,j} \leq I_\fM$ for all $j$, $\sum^n_{k=1} A_{k,j} = I_\fM$ for all $j$, and
\[
(A_{n,j})_{j\geq 1} = I_{\fM^\omega} - \sum^{n-1}_{k=1} A_k = A_n.
\]
 
Let $E_\A : \fM \to \A$ be the normal conditional expectation of $\fM$ onto $\A$.  By Theorem \ref{thm:multivariate-carpenter} and by construction, for each $j \in \bN$ there exists pariwise orthogonal projections $\{P_{k,j}\}^n_{k=1} \subseteq \fM$ that sum to the identity such that 
\[
\left\|E_\A(P_{k,j}) - A_{k,j}\right\| < \frac{1}{j}
\]
for all $k$.  Therefore, if
\[
P_k = (P_{k,j})_{j\geq 1} \in \fM^\omega,
\]
then $\{P_k\}^n_{k=1}$ are pairwise orthogonal projections that sum to $I_{\fM^\omega}$ such that
\[
E_{\A^\omega}(P_k) = (E_{\A}(P_{k,j}))_{j\geq 1} = (A_{k,j})_{j\geq 1} = A_k. \qedhere
\]
\end{proof}

\subsection{Type II$_\infty$ factors}

The proof of the II$_\infty$ case of Theorem \ref{thm:multivariate-carpenter} follows easily using previously illustrated techniques and arguments.

\begin{thm}
\label{thm:approx-carpenter-II-infty}
Theorem \ref{thm:multivariate-carpenter}  holds in the case that $\fM$ is a II$_\infty$ factor.
\end{thm}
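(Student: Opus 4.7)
The plan is to reduce the II$_\infty$ case to the II$_1$ case already established, by decomposing $\fM$ via a family of finite-trace projections in $\A$. Since $E_\A$ is normal and $\A$ is a MASA in a semifinite factor, $E_\A$ is automatically trace-preserving. This in turn makes $\tau|_\A$ semifinite, so one can find a family $\{F_m\}_{m \in \bN}$ of pairwise orthogonal projections in $\A$ with $\tau(F_m) < \infty$ for every $m$ and $\sum_m F_m = I_\fM$ in the strong operator topology.

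For each $m$, the compression $\fM_m := F_m \fM F_m$ is a II$_1$ factor (the compression of a factor by a non-zero finite projection), and $\A_m := \A F_m$ is a MASA in $\fM_m$. Because $F_m \in \A$, the conditional expectation $E_{\A_m}$ is simply the restriction of $E_\A$ to $\fM_m$, and the elements $A_k^{(m)} := A_k F_m$ form a partition of unity in $\A_m$. I would then apply the already-established II$_1$ case of Theorem \ref{thm:multivariate-carpenter} to obtain, for each $m$, pairwise orthogonal projections $\{P_k^{(m)}\}_{k=1}^n \subseteq \fM_m$ with $\sum_{k=1}^n P_k^{(m)} = F_m$, $\|A_k F_m - E_{\A_m}(P_k^{(m)})\| < \epsilon$, and $\tau(P_k^{(m)}) = \tau(A_k F_m)$.

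Setting $P_k := \sum_m P_k^{(m)}$ (strong operator topology), the pairwise orthogonality of the $F_m$, together with the orthogonality within each level, ensures the $\{P_k\}_{k=1}^n$ are pairwise orthogonal projections summing to $I_\fM$. The orthogonality of the corners $F_m \A F_m$ then gives
\[
\|E_\A(P_k) - A_k\| = \sup_m \|E_{\A_m}(P_k^{(m)}) - A_k F_m\| < \epsilon,
\]
and normality of $\tau$ yields $\tau(P_k) = \sum_m \tau(P_k^{(m)}) = \sum_m \tau(A_k F_m) = \tau(A_k)$, as required.

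The main technical point to verify is the existence of $\{F_m\}$, i.e. the semifiniteness of $\tau|_\A$. I plan to address this by noting that for any non-zero projection $p \in \A$ one can pick a non-zero finite-trace projection $q \leq p$ in $\fM$ (by semifiniteness of $\fM$); then $0 \neq E_\A(q) \leq p$ lies in $\A$ with $\tau(E_\A(q)) = \tau(q) < \infty$, so a spectral projection of $E_\A(q)$ at some level $\delta > 0$ gives a non-zero finite-trace projection in $\A$ below $p$. A standard Zorn's lemma argument then produces the desired maximal family $\{F_m\}$.
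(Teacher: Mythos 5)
Your argument is correct and takes a genuinely different route from the paper's. The paper's proof reduces to finite-spectrum $A_k$, takes common spectral projections in $\A$, and treats the resulting corners case by case: the finite-trace corners via the II$_1$ case of Theorem~\ref{thm:multivariate-carpenter}, and the infinite-trace corners (where the $A_k$ have been made scalar) via an argument modeled on the continuous-MASA scalar case of Lemma~\ref{lem:continuous-masa-scalar-case}. You instead exhaust the identity by pairwise orthogonal finite-trace projections $\{F_m\}\subseteq\A$ and apply the II$_1$ case uniformly to every corner $F_m\fM F_m$, which dispenses with a separate scalar argument and makes the reduction more transparent; the compression $F_m\A F_m$ is indeed a MASA in $F_m\fM F_m$, the restriction of $E_\A$ is the unique normal conditional expectation there, and normality of $E_\A$ and of $\tau$ make the SOT summation and the trace identity go through. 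The one point deserving more care is your justification that $\tau|_\A$ is semifinite (so that the family $\{F_m\}$ exists): you first assert that $E_\A$ is automatically $\tau$-preserving and then deduce semifiniteness, but the standard logical order is the reverse --- semifiniteness of $\tau|_\A$ is what guarantees the existence of a $\tau$-preserving normal conditional expectation (Takesaki's theorem), and uniqueness of normal conditional expectations onto a MASA (since $\A'\cap\fM=\A$) then identifies it with $E_\A$. It is cleaner simply to invoke the known equivalence, for a MASA $\A$ in a semifinite factor, between the existence of a normal conditional expectation onto $\A$ and the semifiniteness of $\tau|_\A$, rather than deriving semifiniteness from $\tau$-preservation as you wrote. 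With that point adjusted, the rest of the argument is sound.
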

\begin{proof}
The proof of the case where each $A_k$ is a scalar follows by using arguments from Lemma \ref{lem:continuous-masa-scalar-case} along with the fact that one can find $n$ pairwise orthogonal, infinite projections in $\A$.  The result then follows from using finite approximations of the $A_k$s, taking all possible intersections of spectral projections, using the scalar case to deal with the infinite common spectral projections, and using the II$_1$ case of Theorem \ref{thm:multivariate-carpenter} to deal with the finite common spectral projections.
\end{proof}

\subsection{Type III factors}

The III case of Theorem \ref{thm:multivariate-carpenter} is a trivial application of Theorem \ref{thm:matrix-constant-diagonal}.

\begin{thm}
\label{thm:approx-carpenter-III}

Theorem \ref{thm:multivariate-carpenter} holds in the case $\fM$ is a type III factor.

\end{thm}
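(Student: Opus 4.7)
The type III case follows easily from Theorem \ref{thm:matrix-constant-diagonal} after two simple structural observations about a type III factor $\fM$. First, every MASA $\A$ of $\fM$ is necessarily diffuse: a minimal projection $p \in \A$ would make $p \A p = \bC p$ a MASA of the type III factor $p \fM p$, forcing $p \fM p = \bC p$, which is absurd. Second, any two non-zero projections in $\fM$ are Murray--von Neumann equivalent. Together these give ample flexibility for embedding matrix algebras with diagonal sitting inside $\A$.

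Given a partition of unity $\{A_k\}_{k=1}^n$ in $\A$ and $\epsilon > 0$, the plan is first to approximate the $A_k$ simultaneously in norm by step functions $\sum_{j=1}^m \alpha_{k,j} Q_j$, where the $Q_j$ are pairwise orthogonal projections in $\A$ summing to $I_\fM$ and $\sum_k \alpha_{k,j} = 1$ for each $j$; this is possible because $E_\A$ is normal and $\A$ is diffuse. After perturbing each $\alpha_{k,j}$ to a rational $p_{k,j}/N_j$ with $\sum_k p_{k,j} = N_j$, I would work inside each corner $Q_j \fM Q_j$, itself a type III factor with diffuse MASA $Q_j \A Q_j$. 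Using diffuseness of $Q_j \A Q_j$, partition $Q_j$ into $N_j$ pairwise orthogonal non-zero projections $\{R_{j,i}\}_{i=1}^{N_j}$ inside $Q_j \A Q_j$; since $Q_j \fM Q_j$ is type III, these are mutually Murray--von Neumann equivalent, so partial isometries implementing the equivalences generate a unital copy of $\M_{N_j}(\bC)$ in $Q_j \fM Q_j$ whose diagonal is exactly $\spn\{R_{j,i}\} \subseteq Q_j \A Q_j$.

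Within each such matrix algebra I would take $S_{k,j}$ to be the sum of some $p_{k,j}$ distinct $R_{j,i}$, yielding pairwise orthogonal projections with $\sum_k S_{k,j} = Q_j$, and apply Theorem \ref{thm:matrix-constant-diagonal} inside the copy of $\M_{N_j}(\bC)$ to obtain a unitary $U_j \in Q_j \fM Q_j$ such that every matrix-unit diagonal entry of $U_j^* S_{k,j} U_j$ equals $p_{k,j}/N_j$. Because these diagonal matrix units already lie in $Q_j \A Q_j$, this gives $E_{Q_j \A Q_j}(U_j^* S_{k,j} U_j) = (p_{k,j}/N_j) Q_j$. Setting $U = \sum_{j=1}^m U_j$ (a unitary in $\fM$) and $P_k = U^* \left( \sum_j S_{k,j} \right) U$, normality of $E_\A$ then yields $E_\A(P_k) = \sum_j (p_{k,j}/N_j) Q_j$, which is within $\epsilon$ of $A_k$, while $\{P_k\}_{k=1}^n$ are pairwise orthogonal and sum to $I_\fM$. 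There is no real obstacle here; the abundance of projections in $\A$ together with the absence of any trace or dimension constraint in a type III factor reduce everything directly to the matricial Theorem \ref{thm:matrix-constant-diagonal}.
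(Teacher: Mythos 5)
Your proposal is correct and follows essentially the same route as the paper: approximate the partition of unity by step functions supported on projections $Q_j$ in $\A$, reduce to the scalar case on each corner $Q_j\fM Q_j$, use the structural facts about a type~III factor and the diffuseness of the MASA to embed a matrix algebra $\M_{N_j}(\bC)$ whose diagonal sits inside $Q_j\A Q_j$, and then apply Theorem~\ref{thm:matrix-constant-diagonal} to force constant diagonals. The paper's proof compresses this into a two-sentence appeal to ``finite approximations,'' to Lemma~\ref{lem:continuous-masa-scalar-case}, and to the divisibility of the identity into arbitrarily many orthogonal projections in $\A$, so your version is simply the same argument spelled out in full.
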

\begin{proof}
The proof reduces to the case each $A_k$ is a scalar by using finite approximations.  The result then trivially follows by using arguments in Lemma \ref{lem:continuous-masa-scalar-case} along with the fact that the identity in a type III factor can be divided into an arbitrary number of pairwise orthogonal projections contained in $\A$.
\end{proof}

\subsection{UHF C*-algebras}

In this section we briefly examine how our results generalize to other classes of C$^*$-algebras.  One excellent candidate is the class of UHF C$^*$-algebras. This is due to their matrix algebra substructures, and the clear choice of a MASA -- namely, the diagonal subalgebra. 

It is tempting to speculate that an approximate multivariate carpenter theorem does not hold for any class of UHF C$^*$-algebras, since no such result holds for matrix algebras (cf. Example \ref{ex:no-finite-dim-multivariate-carpenter}). However, it turns out that this is not the case.

\begin{thm}
\label{thm:UHF-MVCT}
Let $\fA$ be an infinite UHF C$^*$-algebra (i.e. not a matrix algebra) and let $\mathfrak{D}$ be the diagonal subalgebra of $\fA$ with corresponding conditional expectation $E : \fA \to \mathfrak{D}$.  Let $\{A_k\}^n_{k=1}$ be a partition of unity in $\D$.  Then for each $\epsilon > 0$ there exists a collection of (non-zero) pairwise orthogonal projections $\{P_k\}^n_{k=1} \subseteq \fA$ such that $\sum^n_{k=1} P_k = I_\fA$ and
\[
\left\| E(P_k) - A_k\right\| <\epsilon
\]
for all $k$.
\end{thm}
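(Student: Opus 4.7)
The plan is to exploit the inductive limit structure $\fA = \overline{\bigcup_m \M_{k_m}(\bC)}$ with compatible diagonals $\mathfrak{D} = \overline{\bigcup_m \D_{k_m}}$, where $k_m \mid k_{m+1}$ and $k_m \to \infty$, together with a scalar-case reduction based on Theorem \ref{thm:matrix-constant-diagonal}. Since $\mathfrak{D}$ is the norm-closure of $\bigcup_m \D_{k_m}$, for $m$ large enough each $A_k^{(m)} := E_{\D_{k_m}}(A_k) \in \D_{k_m}$ satisfies $\|A_k - A_k^{(m)}\| < \epsilon/3$, and since the expectation is unital the family $\{A_k^{(m)}\}_k$ is still a partition of unity. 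Write $A_k^{(m)} = \sum_{i=1}^{k_m} \alpha_{k,i} e_{ii}^{(m)}$ in terms of the diagonal matrix units of $\D_{k_m}$; for each fixed $i$, the tuple $(\alpha_{k,i})_{k=1}^n$ is a probability vector. Each corner $e_{ii}^{(m)} \fA e_{ii}^{(m)}$ is itself an infinite UHF algebra (its supernatural number is $\mathfrak{n}/k_m$, still infinite) with diagonal $e_{ii}^{(m)} \mathfrak{D} e_{ii}^{(m)}$ and with conditional expectation equal to the restriction of $E$. It therefore suffices to prove the following \emph{scalar case}: in any infinite UHF algebra $\fB$ with diagonal $\mathfrak{D}_\fB$ and expectation $E_\fB$, and for any probability vector $(\beta_k)_{k=1}^n$, there exist pairwise orthogonal projections $Q_k \in \fB$ summing to $I_\fB$ with $\|E_\fB(Q_k) - \beta_k I_\fB\| < \epsilon/3$.

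To establish the scalar case, approximate each $\beta_k$ by $p_k/q$ with $p_k \in \bN \cup \{0\}$, $\sum_k p_k = q$, and $q$ a positive integer dividing the supernatural number of $\fB$. Since $\fB$ is infinite UHF, its supernatural number has arbitrarily large divisors, so $q$ can be chosen with $|p_k/q - \beta_k| < \epsilon/3$. Fix a unital $*$-embedding $\M_q(\bC) \hookrightarrow \fB$ coming from the inclusion $\M_q \subseteq \M_{k_M}$ for some $M$ with $q \mid k_M$; this embedding is automatically compatible with the diagonals, so $E_\fB$ restricts on $\M_q(\bC)$ to the usual matricial expectation onto $\D_q$. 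Choose pairwise orthogonal diagonal projections $R_k \in \D_q$ with $\mathrm{rank}(R_k) = p_k$, and apply Theorem \ref{thm:matrix-constant-diagonal} to $N = \sum_k k R_k \in \M_q(\bC)$ (so that each $R_k \in \ca(N)$): this produces a unitary $V \in \M_q(\bC)$ such that every diagonal entry of $V^*R_kV$ equals $\tau(R_k) = p_k/q$. Setting $Q_k := V^*R_kV$, the diagonal of $Q_k$ is exactly $(p_k/q) I_\fB$, which lies within $\epsilon/3$ of $\beta_k I_\fB$.

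Combining, apply the scalar case inside each of the $k_m$ corners $e_{ii}^{(m)} \fA e_{ii}^{(m)}$ with probability vector $(\alpha_{k,i})_k$, obtaining pairwise orthogonal projections $P_{k,i}$ summing to $e_{ii}^{(m)}$ with $\|E(P_{k,i}) - \alpha_{k,i} e_{ii}^{(m)}\| < \epsilon/3$. The aggregated $P_k := \sum_i P_{k,i}$ are pairwise orthogonal projections in $\fA$ summing to $I_\fA$, and satisfy $\|E(P_k) - A_k\| < \epsilon$ by the triangle inequality. Non-triviality of each $P_k$ is arranged by a preliminary small perturbation making every $A_k$ strictly positive, which forces $p_{k,i} \geq 1$ for some $i$ in the scalar step. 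The main obstacle in the argument is precisely the rational approximation in the scalar case: one needs denominators dividing the supernatural number of $\fA$, which is exactly where the infinite-UHF hypothesis enters. This is also why no matrix-algebra analogue of the theorem can hold (cf.\ Example \ref{ex:no-finite-dim-multivariate-carpenter}), since there the ambient supernatural number is a fixed integer and the divisors available for approximation are bounded.
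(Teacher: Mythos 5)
Your argument is correct and follows essentially the same route as the paper's proof: approximate so that the $A_k$ lie in a finite diagonal $\D_{k_m}$, reduce to scalar partitions of unity inside the UHF corners $e_{ii}^{(m)}\fA e_{ii}^{(m)}$, approximate the scalar weights by rationals with denominator dividing the supernatural number, and produce constant-diagonal projections via Theorem \ref{thm:matrix-constant-diagonal}. Your write-up merely spells out more explicitly why each corner is again an infinite UHF algebra, why $E$ restricts to the matricial expectation on the embedded $\M_q(\bC)$, and how a preliminary perturbation guarantees the $P_k$ are non-zero.
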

\begin{proof}
By approximating, we may assume that $A_1,\ldots, A_n \in \mathfrak{D} \cap \M_m(\bC)$ where $\M_m(\bC)$ is part of the finite dimensional part in the construction of $\fA$.  Thus, by applying the following argument $m$ times, we may assume that $m = 1$ and  each $A_k$ is a (non-zero) scalar.

Consider the matrix algebra sequence
\[
\bC \hookrightarrow \M_{n_1}(\bC) \hookrightarrow \M_{n_2}(\bC) \hookrightarrow \cdots.
\]
It is clear that we can approximate $\{A_k\}^n_{k=1}$ by rational numbers whose denominators are $n_j$ for some large enough $j$.  Hence, as each $A_k$ is a diagonal scalar matrix in $\M_{n_j}(\bC)$, by Theorem \ref{thm:matrix-constant-diagonal} we can choose (non-zero) pairwise orthogonal projections with the correct diagonals to obtain the result.
\end{proof}
\begin{rem}
\label{rem:UHF-no-SH}
Note that the difference between this result and Theorem \ref{thm:multivariate-carpenter} for II$_1$ factors is the additional conclusion obtained for II$_1$ factors that $\tau(P_k) = \tau(A_k)$ for each $k$.  This cannot be accomplished in the UHF setting, since projections in matrix algebras necessarily have rational traces when viewed as elements in a UHF C$^*$-algebra.
\end{rem}

\section{Schur-Horn Type Theorems for Normal Operators} \label{sec:schur-horn}

In this section we will establish our approximate Schur-Horn type theorems for normal operators in a von Neumann factor by applying Theorem \ref{thm:multivariate-carpenter}.  Recall that given a von Neumann algebra $\fM$, a MASA $\A$ with corresponding conditional expectation $E_\A : \fM \to \A$, and a normal operator $N \in \fM$, our goal is to characterize the set of approximate diagonals
\[
\overline{\D_\A(N)}^{\|\cdot\|} := \overline{\{ E_\A(U^*NU) \mid U \text{ a unitary in }\fM \}}^{\left\|\cdot\right\|}.
\]

In order to prove a necessary condition for an operator $A$ to belong to $\overline{\D_\A(N)}^{\|\cdot\|}$, we first require the following lemma.

\begin{lem}
\label{lem:spectrum-of-expectation-in-convex-hull}
Let $\fA$ and $\fB$ be unital C$^*$-algebras with $\fA$ abelian and let $\Phi : \fB \to \fA$ be a unital positive map.  If $N \in \fB$ is a normal operator and
\[
T \in \overline{ \{ \Phi(U^*NU) \, \mid \, U \text{ a unitary in } \fB\} }^{\|\cdot\|},
\]
then $\sigma(T) \subseteq \cconv(\sigma(N))$.
\end{lem}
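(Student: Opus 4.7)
The plan is to translate the statement about spectra into a statement about states, and then use the fact that any state maps a normal element into the closed convex hull of its spectrum.

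First, I would invoke Gelfand duality for $\fA$. Since $\fA$ is a unital abelian C$^*$-algebra, $\fA \cong C(X)$ for some compact Hausdorff space $X$, and the spectrum of any element $A \in \fA$ coincides with the range of the corresponding continuous function on $X$. In particular, $\sigma(T) = \{\phi_x(T) : x \in X\}$, where $\phi_x$ ranges over the characters of $\fA$. Hence it suffices to show that $\phi_x(T) \in \cconv(\sigma(N))$ for every character $\phi_x$.

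Next, I would observe that each composition $\psi_x := \phi_x \circ \Phi : \fB \to \bC$ is a state on $\fB$ (unital because both $\Phi$ and $\phi_x$ are, and positive because a composition of positive maps is positive). The key analytic fact I would use is the standard result that if $\psi$ is any state on a unital C$^*$-algebra $\fB$ and $M \in \fB$ is normal, then $\psi(M) \in \cconv(\sigma(M))$. One way to prove this is to restrict $\psi$ to the unital abelian C$^*$-subalgebra $\ca(M, I) \cong C(\sigma(M))$, where by the Riesz representation theorem $\psi$ corresponds to a Borel probability measure on $\sigma(M)$; then $\psi(M)$ is the centroid of this measure and therefore lies in $\cconv(\sigma(M))$.

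Combining these ingredients, for every unitary $U \in \fB$ the element $U^*NU$ is normal with $\sigma(U^*NU) = \sigma(N)$, so
\[
\phi_x(\Phi(U^*NU)) = \psi_x(U^*NU) \in \cconv(\sigma(N)).
\]
Finally, to handle the norm closure, if $T = \lim_n \Phi(U_n^*NU_n)$ in norm for some unitaries $U_n \in \fB$, then by continuity of $\phi_x$ we have $\phi_x(T) = \lim_n \phi_x(\Phi(U_n^*NU_n))$, and since $\cconv(\sigma(N))$ is closed the limit lies in $\cconv(\sigma(N))$. Hence $\sigma(T) \subseteq \cconv(\sigma(N))$, as required. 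There is no substantial obstacle here; the only nontrivial ingredient is the well-known state-spectrum containment for normal elements, and the rest is bookkeeping with Gelfand duality and norm-continuity.
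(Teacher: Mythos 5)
Your proof is correct and takes essentially the same approach as the paper: identify $\sigma(T)$ with the values of characters of $\fA$ applied to $T$, note that each character composed with $\Phi$ is a state on $\fB$, and invoke the fact that states send a normal element into the closed convex hull of its spectrum. The only cosmetic difference is in passing to the norm closure, where the paper appeals to upper semicontinuity of the spectrum while you use norm-continuity of each individual character — both are fine, and yours is arguably the more elementary bookkeeping.
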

\begin{proof}
Suppose $T = \Phi(U^*NU)$ for some unitary $U \in \fB$.  Since $\fA$ is abelian,
\[
\sigma(T) = \{\phi(T) \, \mid \, \varphi \in \M(\fA)\},
\]
where $\M(\fA)$ denotes the set of multiplicative linear functionals.  Notice for each $\varphi \in \M(\fA)$ that $\varphi \circ \Phi$ is a state on $\fB$.  Hence, as $U^*NU$ is normal, 
\[
\varphi(T) = \varphi(\Phi(U^*NU)) \in \cconv(\sigma(U^*NU)) = \cconv(\sigma(N))
\]
for all $\varphi \in \M(\fA)$.  Hence $\sigma(T) \subseteq \cconv(\sigma(N))$.  The result then follows by the semicontinuity of the spectrum.
\end{proof}

\begin{cor}\label{cor:approx-diagonals-have-spectrum-in-convex-hull}
Let $N$ be a normal operator in a von Neumann algebra $\A$ and let $\A$ be a MASA in $\fM$ with corresponding conditional expectation $E_\A : \fM \to \A$. Then for every $A \in \overline{\D_\A(N)}^{\|\cdot\|}$,
\[
\sigma(A) \subseteq \cconv(\sigma(N)).
\]
\end{cor}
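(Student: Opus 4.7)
The plan is to obtain the corollary as an immediate specialization of Lemma \ref{lem:spectrum-of-expectation-in-convex-hull}. First I would match up the data: take $\fB = \fM$ and $\fA = \A$. Since $\A$ is a MASA in $\fM$, it is in particular abelian, so the abelian hypothesis of the lemma is satisfied. The conditional expectation $E_\A : \fM \to \A$ is a unital positive map (indeed u.c.p.), so it plays the role of the map $\Phi$ in the lemma.

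Next I would observe that, by the very definition of $\D_\A(N)$ in Problem \ref{problem:normal-schur-horn}, the closure $\overline{\D_\A(N)}^{\|\cdot\|}$ is literally the norm closure of the set $\{\Phi(U^*NU) \mid U \text{ a unitary in } \fB\}$ appearing in the statement of the lemma. Hence every $A \in \overline{\D_\A(N)}^{\|\cdot\|}$ lies in that closure, and the lemma delivers $\sigma(A) \subseteq \cconv(\sigma(N))$ directly. There is no real obstacle: the work was already done in the lemma, which handled the general (and slightly more abstract) C$^*$-algebraic statement precisely so that corollaries like this one would be one-line.

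If I wanted to present a self-contained argument instead, I would simply unwind the lemma's proof in this setting. For any unitary $U \in \fM$ and any character $\varphi \in \M(\A)$, the functional $\varphi \circ E_\A$ is a state on $\fM$, so applied to the normal element $U^*NU$ it returns a value in $\cconv(\sigma(U^*NU)) = \cconv(\sigma(N))$. Since $\A$ is abelian, $\sigma(E_\A(U^*NU))$ is exactly the set of such values as $\varphi$ ranges over $\M(\A)$, yielding the inclusion for elements of $\D_\A(N)$. Passing to the norm closure preserves the inclusion by upper semicontinuity of the spectrum for normal elements in a C$^*$-algebra, completing the argument.
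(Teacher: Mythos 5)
Your proposal is correct and is exactly the intended argument: the paper gives no separate proof of the corollary precisely because it is the immediate specialization of Lemma \ref{lem:spectrum-of-expectation-in-convex-hull} with $\fB=\fM$, $\fA=\A$ and $\Phi=E_\A$, which you identify. Your optional unwinding of the lemma, including the appeal to semicontinuity of the spectrum to pass to the norm closure, also mirrors the lemma's own proof.
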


As in Section \ref{sec:multivariate-carpenter}, we will consider von Neumann factors according to their type.

\subsection{Type I$_\infty$ factors}

For the case of $\B(\H)$, we divide the discussion into two parts, depending on the type of MASA.

\subsubsection{The case of the discrete MASA}

In this section we will generalize \cite{N1999}*{Theorem 3.13} as much as seems possible.  In particular, we will establish Theorem \ref{thm:SH-BH-Discrete-Continuous} for the case of a discrete MASA.  The condition $\sigma(N) \subseteq \cconv (\sigma_e(N))$ remains necessary for us due to the lack of a Schur-Horn type theorem for normal matrices. 

We begin with the following simplification.

\begin{lem}
\label{lem:approx-SH-BH-essential}
Let $N$ be a normal operator in $\B(\H)$ such that $\sigma(N) =\sigma_e(N)$.  Then
\[
\overline{\D_\D(T)}^{\left\|\cdot\right\|} = \{T \in \D \, \mid \, \sigma(T)  \subseteq \cconv(\sigma(N))\}.
\]
\end{lem}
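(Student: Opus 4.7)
The inclusion $\overline{\D_\D(N)}^{\|\cdot\|}\subseteq\{T\in\D:\sigma(T)\subseteq\cconv(\sigma(N))\}$ follows immediately from Corollary \ref{cor:approx-diagonals-have-spectrum-in-convex-hull} together with the closedness of $\cconv(\sigma(N))$, so the entire content of the lemma lies in the reverse inclusion. My plan has three moving parts: (i) write a target $T$ as an approximate convex combination $\sum_k \lambda_k A_k$ with $\lambda_k\in\sigma(N)$ and coefficients $\{A_k\}$ forming a partition of unity in $\D$; (ii) use the hypothesis $\sigma(N)=\sigma_e(N)$ to decompose $N$ similarly as $\sum_k\lambda_k Q_k$ with pairwise orthogonal, infinite-rank spectral projections $\{Q_k\}$; and (iii) invoke the Approximate Multivariate Carpenter Theorem to replace each $Q_k$ by a projection $P_k$ whose $\D$-diagonal matches $A_k$, so that a single unitary intertwines the two partitions.

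Fix $\ep>0$ and a finite $\ep$-net $\{\lambda_1,\dots,\lambda_n\}\subseteq\sigma(N)$. Writing $T=\diag(t_j)_{j\ge 1}$, each $t_j\in\sigma(T)\subseteq\cconv(\sigma(N))$ lies within $\ep$ of some convex combination of the $\lambda_k$, so for each $j$ one selects $a_k^{(j)}\in[0,1]$ with $\sum_k a_k^{(j)}=1$; the diagonal operators $A_k:=\diag(a_k^{(j)})_{j\ge 1}$ form a partition of unity in $\D$ with $\|T-\sum_k\lambda_k A_k\|<\ep$. A further small perturbation $A_k\mapsto(1-n\delta)A_k+\delta I_\H$ allows me to assume, at a small additional cost, that $\delta I_\H\le A_k\le(1-\delta)I_\H$ for every $k$, which will be crucial below. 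Applying Theorem \ref{thm:multivariate-carpenter} to $\{A_k\}$ in $\B(\H)$ with the discrete MASA produces pairwise orthogonal projections $\{P_k\}_{k=1}^n$ with $\sum_k P_k=I_\H$, $\|A_k-E_\D(P_k)\|<\ep$, and $\sigma(P_k)=\sigma_e(P_k)$; the two-sided bound on $A_k$ rules out $P_k\in\{0,I_\H\}$, so each $P_k$ has infinite rank and infinite corank.

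On the $N$-side, the hypothesis $\sigma(N)=\sigma_e(N)$ ensures that for any nonempty relatively open $U\subseteq\sigma(N)$ the spectral projection $E_N(U)$ has infinite rank, since a nonzero finite-rank $E_N(U)$ would force an isolated eigenvalue of finite multiplicity, contradicting $\sigma(N)=\sigma_e(N)$. Choosing a Borel partition $\{B_k\}$ of $\sigma(N)$ with $\lambda_k\in B_k$ and each $B_k$ of diameter less than $2\ep$, and then merging away any piece whose spectral projection has finite positive rank, I may assume each $Q_k:=E_N(B_k)$ is either zero or of infinite rank; discarding indices with $Q_k=0$ (and adjusting the $A_k$'s at negligible cost) I may assume every $Q_k$ has infinite rank. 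Then $\|N-\sum_k\lambda_k Q_k\|<2\ep$, and both $\{P_k\}$ and $\{Q_k\}$ are partitions of $I_\H$ into projections of rank and corank $\aleph_0$, hence are simultaneously unitarily equivalent: there is a unitary $U\in\B(\H)$ with $U^*Q_k U=P_k$ for all $k$. Consequently
\[
E_\D(U^*NU)\;\approx\;\sum_k\lambda_k E_\D(P_k)\;\approx\;\sum_k\lambda_k A_k\;\approx\;T
\]
within a controllable multiple of $\ep$, so letting $\ep\to 0$ yields $T\in\overline{\D_\D(N)}^{\|\cdot\|}$.

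The main technical obstacle is synchronizing the ranks of the two projection families: Theorem \ref{thm:multivariate-carpenter} only guarantees $\sigma(P_k)=\sigma_e(P_k)$, which still permits $P_k\in\{0,I_\H\}$, while the spectral partition of $N$ naively produces $Q_k$'s of possibly finite positive rank. The combined fix---bounding each $A_k$ uniformly away from $0$ and $I_\H$ before invoking Theorem \ref{thm:multivariate-carpenter}, and exploiting $\sigma(N)=\sigma_e(N)$ to refuse finite-rank pieces in the spectral partition of $N$---is what pins down the dimensional data on both sides and allows a single unitary to intertwine the two families.
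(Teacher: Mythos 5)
Your overall strategy matches the paper's: reduce to finite spectrum, write $T$ via coefficient operators $\{A_k\}$ forming a partition of unity, invoke the approximate multivariate Carpenter theorem to obtain pairwise orthogonal projections $\{P_k\}$ with $E_\D(P_k)\approx A_k$, guarantee each $P_k$ is nontrivial with $\sigma(P_k)=\sigma_e(P_k)$, and then relate $\sum_k\lambda_k P_k$ to $N$ via a unitary. The only divergence is in how that last intertwining is produced: the paper forms $N'=\sum_k\beta_k P_k$ and observes $N'$ and $N_0$ share the same finite spectrum and essential spectrum (hence are approximately---in fact exactly---unitarily equivalent), while you construct explicit spectral projections $Q_k$ of $N$ with infinite rank and match them directly with the $P_k$. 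Unpacked, these are the same argument; neither is more elementary than the other.

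There is a genuine, if minor, gap in your construction of the $Q_k$. You correctly note that $\sigma(N)=\sigma_e(N)$ forces $E_N(U)$ to have infinite rank for every nonempty \emph{relatively open} $U\subseteq\sigma(N)$, but then you pass to an arbitrary Borel partition $\{B_k\}$ of small diameter, and there the conclusion can fail: take $N$ to be the multiplication operator on $L^2[0,1]$ direct-summed with a diagonal operator whose eigenvalues enumerate the rationals in $[0,1]$ each with multiplicity one; then $\sigma(N)=\sigma_e(N)=[0,1]$ yet $E_N(\{1/2\})$ has rank one. Your fallback of ``merging away'' a finite-rank piece is problematic because merging $B_k$ into a distant cell destroys the diameter bound, hence spoils the estimate $\|N-\sum_k\lambda_k Q_k\|<2\ep$; and ``discarding indices with $Q_k=0$ and adjusting the $A_k$'s'' changes the index set on the $\{A_k\}$ side in a way you do not pin down. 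The clean fix is to build the partition so that each $B_k$ contains a nonempty relatively open subset of $\sigma(N)$ (e.g.\ disjointify a finite minimal cover of $\sigma(N)$ by relatively open $\ep$-balls, discarding any cell with empty relative interior as redundant), which makes each $Q_k$ nonzero of infinite rank from the outset. The paper avoids the issue by replacing $N$ by an $N_0$ with $\sigma(N_0)=\sigma_e(N_0)$ finite at the very first step, so no spectral projection of $N$ itself ever needs to be inspected; that reduction is asserted without proof there, but it conceals exactly the same subtlety you ran into.
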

\begin{proof}
One inclusion follows from Lemma \ref{lem:spectrum-of-expectation-in-convex-hull} and the fact that $E_\D$ is completely positive.
 
For the other inclusion, fix $T \in \D$ such that $\sigma(T) \subseteq \cconv(\sigma(N))$. Let $\epsilon > 0$.  Since $T$ and $N$ are normal operators, it is clear that we can find normal operators $T_0 \in \D$ and $N_0 \in \B(\H)$ such that 
\begin{itemize}
\item $\sigma(T_0)$ and $\sigma(N_0)$ consist of a finite number of points,
\item $\sigma_e(N_0) = \sigma(N_0)$,
\item $\left\|T - T_0\right\| \leq \epsilon$,
\item $\left\|N - N_0\right\| \leq \epsilon$, and
\item $\sigma(T_0) \subseteq \cconv(\sigma(N_0))$.
\end{itemize}   
It suffices to prove the result for the pair $N_0$ and $T_0$.
 
Let $\sigma(T_0) = \{\alpha_j\}^m_{j=1}$ and $\sigma(N_0) = \sigma_e(N_0) = \{\beta_k\}^n_{k=1}$.  Since 
\[
\{\alpha_j\}^m_{j=1} = \sigma(T_0) \subseteq \cconv(\sigma(N_0)) = \cconv(\{\beta_k\}^n_{k=1}),
\]
and since $T_0$ is a diagonal operator, there exists 
\[
\{\gamma_{j,k}\, \mid \, j \in \bN, k = 1,\ldots, n\} \subseteq [0,1]
\]
such that $\sum^n_{k=1} \gamma_{j,k} = 1$ for all $j \in \bN$ and
\[
T_0 = \sum^n_{k=1} \beta_k \diag(\gamma_{1,k}, \gamma_{2,k}, \gamma_{3,k}, \ldots).
\]
Note that by perturbing each $\gamma_{1,k}$ if necessary, we may assume that $\gamma_{1,k} \in (0,1)$ for all $k$, $\sum^n_{k=1} \gamma_{1,k} = 1$, and
\[
\left\|T_0 - \sum^n_{k=1} \beta_k \diag(\gamma_{1,k}, \gamma_{2,k}, \gamma_{3,k}, \ldots)\right\| < \epsilon.
\]
 
Let 
\[
M = \sup_k |\beta_k| + 1 \qand
\gamma = \inf_k  \gamma_{1,k}  > 0.
\]
For each $k$ let 
\[
A_k = \diag(\gamma_{1,k}, \gamma_{2,k}, \gamma_{3,k}, \ldots) \in\D.
\]
Thus $\{A_k\}^n_{k=1}$ is a partition of unity in $\D$.  By Theorem \ref{thm:approx-carpenter-BH} there exists pairwise orthogonal projections $\{P_k\}^n_{k=1}$ with $\sigma(P_k) = \sigma_e(P_k)$ such that $\sum^n_{k=1} P_k = I_\H$ and
\[
\left\|A_k - E_\D(P_k)\right\| < \min\left\{\frac{\epsilon}{M}, \frac{\gamma}{2}\right\}
\]
for all $k$. Since $\left\|A_k\right\| \geq \gamma$, we see that $E_\D(P_k) \neq 0$ for each $k$ so $P_k \neq 0$ and $P_k \neq I_\H$ for all $k$.
 
Notice that
\[
\left\|T_0 - E_\D\left(\sum^n_{k=1} \beta_k P_k\right)\right\| \leq \epsilon + \sum^n_{k=1} |\beta_k| \left\|A_k - E_\D(P_k)\right\| < 2\epsilon.
\]
However, it is clear that $N' = \sum^n_{k=1} \beta_k P_k$ is a normal operator such that $\sigma(N') = \sigma_e(N') = \sigma(N_0) = \sigma_e(N_0)$.  Whence $N'$ and $N_0$ are approximately unitarily equivalent in $\B(\H)$ and the result follows.
\end{proof}
Theorem \ref{lem:approx-SH-BH-essential} immediately implies the following result.
\begin{cor}
\label{cor:three-point-spectrum-SH-BH}
If $T \in \D$, then there exists a normal operator $N \in \B(\H)$ such that $\sigma(N) =\sigma_e(N) = \{\alpha_1, \alpha_2, \alpha_3\}$ where $\{\alpha_j\}^3_{j=1} \subseteq \bC$ are non-collinear points and $T \in \overline{\D_\D(N)}^{\left\|\cdot\right\|}$.
\end{cor}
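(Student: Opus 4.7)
The plan is to reduce the corollary directly to Lemma \ref{lem:approx-SH-BH-essential}, which already packages the hard content. Since $T \in \D \subseteq \B(\H)$ is bounded, its spectrum $\sigma(T)$ is a compact subset of $\bC$. The first step is to choose three non-collinear points $\alpha_1, \alpha_2, \alpha_3 \in \bC$ whose closed convex hull $\cconv(\{\alpha_1,\alpha_2,\alpha_3\})$ is a (non-degenerate) triangle containing $\sigma(T)$. This is clearly always possible: any equilateral triangle centered at $0$ with circumradius exceeding $\|T\|$ will do.

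The second step is to construct the normal operator $N$. Decompose $\H = \H_1 \oplus \H_2 \oplus \H_3$ as an orthogonal direct sum of three infinite-dimensional separable Hilbert subspaces, let $Q_j$ denote the projection onto $\H_j$, and set
\[
N = \alpha_1 Q_1 + \alpha_2 Q_2 + \alpha_3 Q_3.
\]
Then $N$ is normal with $\sigma(N) = \{\alpha_1, \alpha_2, \alpha_3\}$. Because each $Q_j$ has infinite rank (so that no $\alpha_j$ is an isolated eigenvalue of finite multiplicity), the essential spectrum coincides with the full spectrum: $\sigma_e(N) = \{\alpha_1,\alpha_2,\alpha_3\} = \sigma(N)$.

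For the final step, observe that by construction $\sigma(T) \subseteq \cconv(\{\alpha_1,\alpha_2,\alpha_3\}) = \cconv(\sigma(N))$. Since $N$ satisfies the hypothesis $\sigma(N) = \sigma_e(N)$ required by Lemma \ref{lem:approx-SH-BH-essential}, that lemma gives
\[
\overline{\D_\D(N)}^{\|\cdot\|} = \{S \in \D \mid \sigma(S) \subseteq \cconv(\sigma(N))\},
\]
and in particular $T$ lies in this set, as required. There is no real obstacle here; the only nontrivial ingredient is Lemma \ref{lem:approx-SH-BH-essential}, and the corollary is essentially the observation that any bounded diagonal operator can be enclosed inside a non-degenerate triangle in the plane.
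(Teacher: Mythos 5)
Your argument is correct and is exactly the paper's intended route: the paper states the corollary as an immediate consequence of Lemma \ref{lem:approx-SH-BH-essential}, and you are simply supplying the (easy) construction of a suitable $N$ with three non-collinear essential eigenvalues of infinite multiplicity whose triangle contains $\sigma(T)$. One small numerical slip: for an equilateral triangle centered at $0$ the inradius is \emph{half} the circumradius, so to guarantee that the closed disk of radius $\|T\|$ (and hence $\sigma(T)$) lies inside the triangle you need circumradius at least $2\|T\|$, not merely exceeding $\|T\|$; this is of course trivially repaired by taking a larger triangle and does not affect the argument.
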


\begin{example}
In \cite{A2007}*{Proposition 5} it was shown that there is no normal operator whose spectrum is $\{0,1,i\}$ and whose diagonal is
\[
T = \diag\left(\frac{1}{2}, \frac{i}{2}, \frac{1+i}{2}, 0, i, 1, 0, i, 1, 0, i, 1, \ldots\right).
\]
However, Corollary \ref{cor:three-point-spectrum-SH-BH} says $T$ is almost the diagonal of a normal operator with spectrum and essential spectrum $\{0,1,i\}$.
\end{example}

Using the above techniques and Lemma \ref{lem:approx-SH-BH-essential}, we obtain the following preliminary result to Theorem \ref{thm:SH-BH-Discrete-Continuous} .
\begin{prop}
\label{prop:approx-SH-BH-convex-essential}
Let $N$ be a normal operator in $\B(\H)$.  Then
\[
\{T \in \D \, \mid \, \sigma(T)  \subseteq \cconv(\sigma_e(N))\} \subseteq \overline{\D_\D(N)}^{\left\|\cdot\right\|}.
\]
\end{prop}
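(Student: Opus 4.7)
The plan is to adapt the argument of Lemma~\ref{lem:approx-SH-BH-essential}, while accommodating isolated eigenvalues of $N$ that lie outside $\cconv(\sigma_e(N))$. Fix $T \in \D$ with $\sigma(T) \subseteq \cconv(\sigma_e(N))$ and $\epsilon > 0$. First I approximate $T$ by a diagonal $T_0 = \sum_{j=1}^{\ell} \alpha_j F_j$ with finite spectrum $\{\alpha_j\} \subseteq \cconv(\sigma_e(N))$ and spectral projections $F_j \in \D$ of infinite rank (any finite-rank spectral projections can be folded into the finite-dimensional construction below). I then choose a finite $\epsilon$-net $\{\beta_k\}_{k=1}^{n} \subseteq \sigma_e(N)$ together with convex expansions $\alpha_j = \sum_k \gamma_{j,k} \beta_k$. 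Finally, decompose $N = N_{\mathrm{disc}} \oplus N_{\mathrm{ess}}$, where $N_{\mathrm{disc}}$ collects those isolated eigenvalues of $N$ at distance $\geq \epsilon$ from $\sigma_e(N)$; these form a finite set of finite total multiplicity $d$, and all other isolated eigenvalues of $N$ lie within $\epsilon$ of $\sigma_e(N)$ and can be absorbed into the essential spectrum by a perturbation of size $O(\epsilon)$ (since the associated spectral projections are pairwise orthogonal).

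The essential part is handled along the lines of Lemma~\ref{lem:approx-SH-BH-essential}. Fix an index $j^*$ and choose a finite-rank subprojection $F' \leq F_{j^*}$ in $\D$ of rank $M \gg d$, and let $F'' = F_{j^*} - F'$. For each $j \ne j^*$, and on $F''\H$, I apply the approximate carpenter theorem (Theorem~\ref{thm:approx-carpenter-BH}) to the partition of unity $\{\gamma_{j,k} F_j\}_{k=1}^n$ inside the corresponding compressed algebra. This yields pairwise orthogonal projections $\{Q_{j,k}\}_{k=1}^n$ summing to $F_j$ (resp.\ $F''$) with $\sigma(Q_{j,k}) = \sigma_e(Q_{j,k})$ and $\|E_\D(Q_{j,k}) - \gamma_{j,k} F_j\|$ as small as desired, so that $S_j := \sum_k \beta_k Q_{j,k}$ satisfies $E_\D(S_j) \approx \alpha_j F_j$.

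The crucial new step accommodates $N_{\mathrm{disc}}$. On $F'\H$ I assemble a normal matrix of the form $N_{\mathrm{disc}}' \oplus \sum_k \beta_k R_k'$, where $N_{\mathrm{disc}}' \cong N_{\mathrm{disc}}$ occupies a $d$-dimensional summand and $\operatorname{rank}(R_k') \approx (M - d)\gamma_{j^*,k}$. Theorem~\ref{thm:matrix-constant-diagonal} then furnishes a unitary conjugation on $F'\H$ making every diagonal entry equal to the normalized trace $\bigl(\operatorname{tr}(N_{\mathrm{disc}}) + \sum_k \beta_k \operatorname{rank}(R_k')\bigr)/M$; since $M \gg d$ dilutes $\operatorname{tr}(N_{\mathrm{disc}})/M$ to a negligible quantity, this value lies within $\epsilon$ of $\alpha_{j^*}$.

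Assembling $\hat N$ as the direct sum of these blocks yields $E_\D(\hat N) \approx T_0 \approx T$. By construction, $\hat N$ has essential spectrum $\{\beta_k\}$ (an $\epsilon$-net of $\sigma_e(N)$) and the same multiplicities on isolated eigenvalues far from the essential spectrum as $N$, so a Berg/Voiculescu-type approximate unitary equivalence result for normal operators yields a unitary $V \in \B(\H)$ with $\|V^* N V - \hat N\| = O(\epsilon)$, whence $\|E_\D(V^*NV) - T\| = O(\epsilon)$. The main obstacle is the construction on $F'\H$: unlike in Lemma~\ref{lem:approx-SH-BH-essential}, one cannot replace $N$ by a normal operator with $\sigma = \sigma_e = \sigma_e(N)$ without destroying its isolated-eigenvalue structure. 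Sequestering $N_{\mathrm{disc}}$ in a finite-dimensional block and using Theorem~\ref{thm:matrix-constant-diagonal} to dilute its diagonal into $\cconv(\sigma_e(N))$ is the infinite-dimensional device that absorbs these outlier eigenvalues, a maneuver that is unavailable in finite dimensions (cf.\ Example~\ref{ex:no-finite-dim-multivariate-carpenter}).
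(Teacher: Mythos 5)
Your core mechanism---sequestering the finite-multiplicity eigenvalues of $N$ that lie far from $\sigma_e(N)$ inside a large finite-dimensional block of size $M \gg d$ and using Theorem~\ref{thm:matrix-constant-diagonal} to dilute their contribution to the diagonal---is exactly the paper's key move (the paper builds the block $D_2 \in \M_{NM+1}(\bC)$ with a single $\lambda_j$ and $NM$ entries from $\sigma_e(N_0)$ and averages). Your handling of the essential part via the approximate carpenter theorem and the concluding appeal to (quantitative) Berg--Voiculescu are also consonant with what the paper does, though the paper arranges exact equality of spectra and essential spectra so that only the exact Berg theorem is invoked, inside Lemma~\ref{lem:approx-SH-BH-essential}.

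There is, however, a genuine gap: the claim that \emph{``any finite-rank spectral projections [of $T$] can be folded into the finite-dimensional construction below.''} The finite block construction produces, after conjugation by Theorem~\ref{thm:matrix-constant-diagonal}, a \emph{constant} diagonal approximately equal to $\alpha_{j^*}$. If $T$ has an isolated eigenvalue $\alpha'$ of finite multiplicity $t'$ with $\alpha'$ far from the other $\alpha_j$, those $t'$ diagonal positions must read $\approx \alpha'$, not $\approx \alpha_{j^*}$, so a single constant-diagonal block cannot accommodate them. One is then tempted to split off a separate $t'$-dimensional sub-block on which to build $\sum_k \beta_k R_k''$ with $\operatorname{rank}(R_k'') \approx t'\gamma_k'$ and average; but $t'$ is fixed by $T$ (possibly $t'=1$) and cannot be enlarged, so the rational approximation $\operatorname{rank}(R_k'')/t' \approx \gamma_k'$ needed by Theorem~\ref{thm:matrix-constant-diagonal} has no room to improve. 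This is precisely why the paper instead pairs each isolated $\alpha_j$ of $T_0$ with an \emph{infinite} block of the form $\alpha_j \oplus \beta_1 I_{\H_0}$ and matches it against $\bigoplus_k \gamma_k I_{\H_0}$ using Lemma~\ref{lem:approx-SH-BH-essential}: the infinite tail of $\beta_1$'s supplies the missing room to average. With that substitution your argument closes; as written, the ``folding'' step needs to be replaced by this Lemma-based pairing.
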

\begin{proof}
Fix $T \in \D$ such that $\sigma(T) \subseteq \cconv(\sigma_e(N))$ and let $\epsilon > 0$.  Choose $T_0 \in \D$ such that $\left\|T - T_0\right\| < \epsilon$, $\sigma_e(T_0) = \{\beta_k\}^n_{k=1}$, and $\sigma(T) \setminus \sigma_e(T) = \{\alpha_j\}^m_{j=1}$ where $\alpha_j,\beta_k \in \cconv(\sigma_e(N))$ (where $m = 0$ is possible).  

Since $N$ is a normal operator, there exists a normal operator $N_0 \in \B(\H)$ such that $\left\|N - N_0\right\| < \epsilon$, $\sigma_e(N_0) = \{\gamma_k\}^\ell_{k=1}$, $\sigma(N_0) \setminus \sigma_e(N_0) = \{\lambda_j\}^d_{j=1}$ (where $d = 0$ is possible), and $\{\beta_k\}^n_{k=1} \cup \{\alpha_j\}^m_{j=1} \subseteq \cconv(\{\gamma_k\}^\ell_{k=1})$.  To complete the proof, it suffices to show
\[
T_0 \in \overline{\D_\D(N_0)}^{\left\|\cdot\right\|}.
\]
Note that the case $d =0$ follows directly from Lemma \ref{lem:approx-SH-BH-essential}.  

For $j = 1,\ldots, m$ let $t_j$ be the multiplicity of the eigenvalue $\alpha_j$ in $T_0$.  Similarly, for each $j = 1,\ldots, d$ let $n_j$ be the multiplicity of the eigenvalue $\lambda_j$ in $N_0$.  By rearranging the orthonormal basis for $\D$, $T_0$ is unitarily equivalent to
\[
\left( \bigoplus^m_{j=1} \left( \alpha_j \oplus \beta_1 I_{\H_0}    \right)^{\oplus t_j} \right) \oplus \left(\bigoplus^n_{k=1} \beta_k I_{\H_0}\right)^{\oplus \sum^d_{j=1} n_j}
\]
where $\H_0$ is an infinite dimensional Hilbert space.  Similarly, $N_0$ is unitarily equivalent to
\[
\left( \bigoplus^d_{j=1} \left(\lambda_j \oplus \left(\bigoplus^n_{k=1} \gamma_k I_{\H_0}\right) \right)^{\oplus n_j} \right) \oplus \left(\bigoplus^\ell_{k=1} \gamma_k I_{\H_0}\right)^{\oplus \sum^m_{j=1} t_j}.
\]
Hence, if we can show that there exist unitaries $U_j$ such that the diagonal of
\[
U_j^*\left(\bigoplus^\ell_{k=1} \gamma_k I_{\H_0}\right)U_j
\]
is within $\epsilon$ of $\alpha_j \oplus \beta_1 I_{\H_0}$ for any $j$, and that there exist unitaries $W_k$ such that the diagonal of
\[
W_k^* \left(\lambda_j \oplus \left(\bigoplus^\ell_{k=1} \gamma_k I_{\H_0}\right)\right) W_k
\]
is within $\epsilon$ of $\oplus^n_{k=1} \beta_k I_{\H_0}$ for any k, then the proof will be complete. 

The $U_j$'s exist by Lemma \ref{lem:approx-SH-BH-essential}. To prove the existence of the $W_k$'s, note that 
\[
\lambda_j \oplus \left(\bigoplus^\ell_{k=1} \gamma_k I_{\H_0}\right)
\]
is unitarily equivalent to
\[
\left(\lambda_j \oplus \left(\bigoplus^\ell_{k=1} \gamma_k I_{\H_0}\right)\right) \oplus \left(\bigoplus^\ell_{k=1} \gamma_k I_{\H_0}\right)^{\oplus (n-1)},
\]
and that
\[
\bigoplus^n_{k=1} \beta_k I_{\H_0} = \beta_1 I_{\H_0} \oplus \left(\bigoplus^n_{k=2} \beta_k I_{\H_0}\right).
\]
Since Lemma \ref{lem:approx-SH-BH-essential} implies there is a unitary operator $V$ such that the diagonal of 
\[
V^*\left(\bigoplus^\ell_{k=1} \gamma_k I_{\H_0}\right)^{n-1}V
\]
is within $\epsilon$ of $\oplus^n_{k=2} \beta_k I_{\H_0}$, it suffices to prove the result for $n = 1$.
 
Since $\beta_1$ is in the convex hull of $\{\gamma_k\}^\ell_{k=1}$, there exist non-zero rational numbers $\{q_k\}^\ell_{k=1} \subseteq [0,1]$ such that $\sum^\ell_{k=1} q_k = 1$ and
\[
\left|\beta_1 - \sum^\ell_{k=1} q_k\gamma_k\right| < \epsilon.
\]
Choose $N \in \bN$ and $\{a_k\}^\ell_{k=1} \subseteq \bN \cup \{0\}$ such that $q_k = \frac{a_k}{N}$, and choose $M \in \bN$ such that
\[
\left|\beta_1 - \frac{\lambda_j + M \sum^\ell_{k=1} a_k\gamma_k}{NM+1}\right| < \epsilon.
\]

Let $D_1 \in \M_N(\bC)$ be the diagonal matrix where exactly $a_k$ of the diagonal entries of $D_1$ are $\gamma_k$ for each $k$. Let $D_2 \in \M_{NM+1}(\bC)$ be the diagonal matrix where $Ma_k$ of the diagonal entries are $\gamma_k$ for each $k$, and the remaining one diagonal entry is $\lambda_j$.  Then up to unitary equivalence,
\[
\lambda_j \oplus \left(\bigoplus^\ell_{k=1} \gamma_k I_{\H_0}\right) = D_2 \oplus D_1^{\oplus \infty}.
\]
Now by Theorem \ref{thm:matrix-constant-diagonal}, $D_1$ is unitarily equivalent to a matrix with every diagonal entry equal to 
\[
\frac{1}{N}\sum^\ell_{k=1} a_k \gamma_k = \sum^\ell_{k=1} q_k \gamma_k,
\]
which is within $\epsilon$ of $\beta_1$. Similarly, $D_2$ is unitarily equivalent to a matrix with every diagonal entry equal to
\[
\frac{\lambda_j + \sum^\ell_{k=1} Ma_k\gamma_k}{NM+1},
\]
which is also within $\epsilon$ of $\beta_1$.  The result now follows by taking a direct sum of unitary matrices obtained as above.
\end{proof}
\begin{thm}
\label{thm:SH-BH-discrete}
Theorem \ref{thm:SH-BH-Discrete-Continuous} holds in the case when $\A$ is a discrete MASA.
\end{thm}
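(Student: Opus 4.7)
The plan is to reduce Theorem \ref{thm:SH-BH-discrete} to two results already established in the excerpt. Since any discrete MASA in $\B(\H)$ is unitarily conjugate to the diagonal MASA $\D$, I may assume without loss of generality that $\A = \D$, so that the statement becomes an assertion about $\overline{\D_\D(N)}^{\|\cdot\|}$.

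For the inclusion $\{A \in \A \mid \sigma(A) \subseteq \cconv(\sigma_e(N))\} \subseteq \overline{\D_\A(N)}^{\|\cdot\|}$, I would simply invoke Proposition \ref{prop:approx-SH-BH-convex-essential}, which provides precisely this containment \emph{without} needing any hypothesis relating $\sigma(N)$ to $\cconv(\sigma_e(N))$. So this direction is immediate.

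For the reverse inclusion, let $A \in \overline{\D_\A(N)}^{\|\cdot\|}$. By Corollary \ref{cor:approx-diagonals-have-spectrum-in-convex-hull}, we have $\sigma(A) \subseteq \cconv(\sigma(N))$. The hypothesis $\sigma(N) \subseteq \cconv(\sigma_e(N))$, combined with the trivial inclusion $\sigma_e(N) \subseteq \sigma(N)$, gives
\[
\cconv(\sigma(N)) = \cconv(\sigma_e(N)),
\]
and hence $\sigma(A) \subseteq \cconv(\sigma_e(N))$, as required. This is the only place where the hypothesis $\sigma(N) \subseteq \cconv(\sigma_e(N))$ enters the argument; without it the containment would weaken to $\cconv(\sigma(N))$, which need not coincide with $\cconv(\sigma_e(N))$ in general, and the two sides of the displayed equation in the theorem would genuinely disagree.

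No substantive obstacle remains, since the real work was carried out earlier in the section: Proposition \ref{prop:approx-SH-BH-convex-essential} was reduced via Lemma \ref{lem:approx-SH-BH-essential} to the approximate multivariate carpenter theorem (Theorem \ref{thm:approx-carpenter-BH}) together with a direct-sum decomposition that matches the spectral data of $N_0$ and $T_0$. The present theorem is essentially a bookkeeping consolidation of those ingredients under the assumption that eliminates the obstruction coming from isolated eigenvalues of $N$ outside $\cconv(\sigma_e(N))$.
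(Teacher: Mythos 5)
Your proposal is correct, and for the forward inclusion $\overline{\D_\A(N)}^{\|\cdot\|}\subseteq\{A:\sigma(A)\subseteq\cconv(\sigma_e(N))\}$ it takes a genuinely different and, I would say, cleaner route than the paper.

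The paper's proof passes to the Calkin algebra: it notes that $E_\D$ maps compacts into $c_0$, so there is an induced completely positive map $\tilde E_\D:\B(\H)/\mathfrak{K}\to\D/c_0$, and then invokes Lemma \ref{lem:spectrum-of-expectation-in-convex-hull} in that quotient setting. This parallels the argument used for the continuous MASA, where $E_\A$ kills compacts outright and the induced map lands in $\A$ itself, giving $\sigma(A)\subseteq\cconv(\sigma_e(N))$ with no extra hypothesis. In the discrete case, however, the induced map lands in $\D/c_0$, and the spectrum of $A+c_0$ in $\D/c_0$ is only $\sigma_e(A)$. So the quotient argument, as literally stated, only yields $\sigma_e(A)\subseteq\cconv(\sigma_e(N))$; controlling the isolated finite-multiplicity eigenvalues of $A$ still requires appealing to the containment $\sigma(A)\subseteq\cconv(\sigma(N))$ and the hypothesis $\sigma(N)\subseteq\cconv(\sigma_e(N))$. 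Your argument skips the Calkin machinery entirely and goes straight to that last step: apply Corollary \ref{cor:approx-diagonals-have-spectrum-in-convex-hull} to get $\sigma(A)\subseteq\cconv(\sigma(N))$, then observe that the hypothesis together with $\sigma_e(N)\subseteq\sigma(N)$ forces $\cconv(\sigma(N))=\cconv(\sigma_e(N))$. This is shorter, makes transparent exactly where the hypothesis enters (which the paper's one-line invocation obscures), and does not leave the $\sigma(A)$ versus $\sigma_e(A)$ gap that a literal reading of the paper's quotient argument does. The reverse inclusion you handle exactly as the paper does, via Proposition \ref{prop:approx-SH-BH-convex-essential}.
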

\begin{proof}
Let $\mathfrak{K}$ denote the set of compact operators. If we identify $\D = \ell^\infty(\bN)$, then we can write $c_0 = \D \cap \mathfrak{K}$. Since $E_\D(K) \in c_0$ for every $K \in \mathfrak{K}$, it follows that the map $\tilde{E}_\D : \B(\H)/\mathfrak{K} \to \D/c_0$ defined by
\[
\tilde{E}_\D(T + \mathfrak{K}) = E_\D(T) + c_0.
\] is well defined and completely positive. Thus one inclusion follows from Lemma \ref{lem:spectrum-of-expectation-in-convex-hull}, and the other inclusion follows from Proposition \ref{prop:approx-SH-BH-convex-essential}.
\end{proof}

\subsubsection{The case of a continuous MASA}

In this section, using ideas similar to those used in the previous section, we will establish Theorem \ref{thm:SH-BH-Discrete-Continuous} for the case of a continuous MASA.  First, we require the following result from \cite{KS1959}*{Remark 5}.
\begin{lem}
\label{lem:continuous-masa-expectation-anhilates-compact}
Let $\A$ be a continuous MASA of $\B(\H)$ with corresponding conditional expectation $E_\A : \B(\H) \to \A$.  If $K \in \B(H)$ is compact, then $E_\A(K) = 0$.
\end{lem}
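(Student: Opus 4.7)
The plan is to argue by contradiction. Writing $K = \re(K) + i\operatorname{Im}(K)$ and decomposing each self-adjoint piece into positive and negative parts, it suffices to handle the case $K \geq 0$, so I assume this and suppose towards a contradiction that $A := E_\A(K) \neq 0$. Since $E_\A$ is positive, $A \geq 0$, and I may choose $\lambda > 0$ so that the spectral projection $Q := \chi_{[\lambda, \|A\|]}(A) \in \A$ is non-zero; note $QAQ \geq \lambda Q$.

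Next I would exploit the continuity of $\A$ to produce infinitely many pairwise orthogonal non-zero subprojections of $Q$. A standard compression argument shows that every non-zero $P \in \A$ has infinite rank: otherwise $P\A P$ would be a MASA inside the finite-dimensional algebra $P\B(\H)P$ and would contain a minimal projection, which would also be minimal in $\A$, contradicting continuity. Similarly, $Q\A Q$ inherits absence of minimal projections from $\A$, so it is diffuse, and I can recursively split $Q$ inside $\A$ to obtain a sequence $\{Q_n\}_{n \geq 1}$ of pairwise orthogonal non-zero projections in $\A$ with $Q_n \leq Q$ for all $n$.

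The contradiction then comes from compactness of $K$. By $\A$-bimodularity of $E_\A$,
\[
E_\A(Q_n K Q_n) = Q_n A Q_n \geq \lambda Q_n,
\]
and contractivity of $E_\A$ forces $\|Q_n K Q_n\| \geq \lambda$. For each $n$ I can therefore pick a unit vector $x_n$ in the range of $Q_n$ with $\|Q_n K Q_n x_n\| \geq \lambda/2$; in particular $\|K x_n\| \geq \lambda/2$. The $x_n$ are orthonormal because the $Q_n$ are pairwise orthogonal, so $x_n \to 0$ weakly, and compactness of $K$ forces $\|K x_n\| \to 0$ --- a contradiction. The main obstacle here is not technical depth but pinpointing the correct reduction: the whole argument rests on the fact that a continuous MASA contains infinitely many pairwise orthogonal non-zero subprojections beneath any non-zero projection, which clashes directly with the tendency of compact operators to send orthonormal sequences to zero in norm.
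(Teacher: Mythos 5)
Your proof is correct. Note that the paper does not supply a proof of this lemma at all; it simply cites Remark 5 of Kadison and Singer \cite{KS1959}, so there is no in-paper argument to compare against. Your self-contained argument holds up: the reduction to positive compact $K$ (positive and negative parts of a compact self-adjoint operator are compact), the choice of spectral projection $Q=\chi_{[\lambda,\|A\|]}(A)$ with $QAQ\geq\lambda Q$, the use of continuity of $\A$ to split $Q$ into a sequence $\{Q_n\}$ of non-zero pairwise orthogonal subprojections, the $\A$-bimodularity giving $E_\A(Q_nKQ_n)=Q_nAQ_n\geq\lambda Q_n$ and contractivity giving $\|Q_nKQ_n\|\geq\lambda$, and finally the orthonormal sequence $(x_n)$ with $Q_nx_n=x_n$ and $\|Kx_n\|\geq\lambda/2$ contradicting the fact that a compact operator sends a sequence tending weakly to zero to a sequence tending to zero in norm. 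One small remark: the detour arguing that every non-zero $P\in\A$ has infinite rank (via MASAs in $P\B(\H)P$) is more than you need; the only fact continuity supplies, and the only one the argument uses, is that $\A$ has no minimal projections, so any non-zero $Q\in\A$ can be repeatedly split into two non-zero orthogonal subprojections in $\A$.
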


\begin{thm}
Theorem \ref{thm:SH-BH-Discrete-Continuous} holds in the case when $\A$ is a continuous MASA.
\end{thm}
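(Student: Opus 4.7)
The plan is to establish the two inclusions separately, leveraging the fact that by Lemma~\ref{lem:continuous-masa-expectation-anhilates-compact}, $E_\A$ annihilates the ideal $\mathfrak{K}$ of compact operators---exactly the feature that distinguishes this case from the discrete MASA case and that allows the assumption $\sigma(N)\subseteq\cconv(\sigma_e(N))$ to be dropped.

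For the inclusion $\overline{\D_\A(N)}^{\|\cdot\|}\subseteq\{A\in\A\mid\sigma(A)\subseteq\cconv(\sigma_e(N))\}$, I would descend $E_\A$ to a well-defined unital positive map $\tilde{E}_\A\colon\B(\H)/\mathfrak{K}\to\A$ by $\tilde{E}_\A(T+\mathfrak{K}):=E_\A(T)$; positivity follows by lifting each positive element of the Calkin algebra to a positive representative modulo a compact. The image of $N$ in $\B(\H)/\mathfrak{K}$ is a normal element with spectrum $\sigma_e(N)$, so Lemma~\ref{lem:spectrum-of-expectation-in-convex-hull} applied to $\tilde{E}_\A$ yields the desired spectral containment, since $\tilde{E}_\A(U^*NU+\mathfrak{K})=E_\A(U^*NU)$ for every unitary $U\in\B(\H)$.

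For the reverse inclusion, fix $A\in\A$ with $\sigma(A)\subseteq\cconv(\sigma_e(N))$ and $\epsilon>0$. I would first norm-approximate $A$ by $A_0=\sum_{j=1}^m\alpha_jQ_j\in\A$ with finite spectrum $\{\alpha_j\}\subseteq\cconv(\sigma_e(N))$ and pairwise orthogonal spectral projections $\{Q_j\}\subseteq\A$ summing to $I_\H$. Next, I would pick a finite $\epsilon$-net $\{\beta_k\}_{k=1}^n\subseteq\sigma_e(N)$, perturb each $\alpha_j$ by at most $\epsilon$ so that $\alpha_j=\sum_kc_{j,k}\beta_k$ with $c_{j,k}\geq0$ and $\sum_kc_{j,k}=1$, and form the partition of unity $A_k:=\sum_jc_{j,k}Q_j\in\A$. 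Theorem~\ref{thm:continuous-masa-approx-carpenter} would then produce pairwise orthogonal projections $\{P_k\}_{k=1}^n\subseteq\B(\H)$ summing to $I_\H$ with $\|E_\A(P_k)-A_k\|$ arbitrarily small; inspection of Lemma~\ref{lem:continuous-masa-scalar-case} (the engine of Theorem~\ref{thm:continuous-masa-approx-carpenter}) shows every nonzero $P_k$ is infinite, being a sum of unitary conjugates of nonzero projections from the continuous MASA $\A$. Hence $N_0:=\sum_k\beta_kP_k$ is a normal operator with $\sigma(N_0)=\sigma_e(N_0)=\{\beta_k\}$, and by construction $E_\A(N_0)=\sum_k\beta_kE_\A(P_k)$ is close to $A_0$, and therefore to $A$.

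The final and most delicate step is to relate $N_0$ back to $N$. Because $E_\A$ kills compacts, one may replace $N$ by any compact perturbation without altering $\D_\A(N)$. I would let $F$ be the finite-rank spectral projection of $N$ onto the (finitely many) isolated eigenvalues that lie farther than $\epsilon$ from $\sigma_e(N)$, so that $(I-F)N(I-F)$ has spectrum contained in an $\epsilon$-neighborhood of $\sigma_e(N)$. Snapping its spectral measure to the net $\{\beta_k\}$ via its own spectral projections yields a normal operator $M$ on $(I-F)\H$ with $\sigma(M)=\sigma_e(M)=\{\beta_k\}$, each $\beta_k$ of infinite multiplicity, and $\|M-(I-F)N(I-F)\|\leq\epsilon$. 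Setting $\tilde N:=M\oplus FNF$ gives $\|\tilde N-N\|\leq\epsilon$, and since $\sigma_e(\tilde N)=\{\beta_k\}=\sigma_e(N_0)$, an appeal to Voiculescu's non-commutative Weyl--von Neumann theorem produces a unitary $V\in\B(\H)$ and a compact $K$ with $V\tilde NV^*=N_0+K$; the isolated eigenvalues of $\tilde N$ coming from the finite-rank $FNF$ are absorbed into $K$. Then
\[
E_\A(VNV^*)=E_\A(N_0)+E_\A(K)+E_\A\bigl(V(N-\tilde N)V^*\bigr)
\]
is within $O(\epsilon)$ of $A$, which suffices. I expect this final assembly---in particular the infinite-ness of the $P_k$, and the compact-perturbation argument absorbing the isolated eigenvalues of $N$ lying outside $\cconv(\sigma_e(N))$---to be the main obstacle; the earlier approximation steps are routine.
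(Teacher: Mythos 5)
Your proof is correct but takes a genuinely different route from the paper's in the final assembly. The first inclusion, via the induced positive map on the Calkin algebra, matches the paper exactly, and the opening steps of the reverse inclusion (approximating $A$ by $A_0=\sum_j\alpha_jQ_j$, choosing $\{\beta_k\}\subseteq\sigma_e(N)$, forming the partition of unity $\{A_k\}$, invoking Theorem~\ref{thm:continuous-masa-approx-carpenter} to produce infinite-rank $\{P_k\}$, and setting $N_0=\sum_k\beta_kP_k$) are also essentially as in the paper. Where you diverge is in relating $N_0$ back to $N$: you perturb $N$ to $\tilde N=M\oplus FNF$ and invoke Voiculescu/BDF to obtain a unitary $V$ and compact $K$ with $V\tilde NV^*=N_0+K$, then use $E_\A(K)=0$. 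The paper instead works constructively: it carves out finite-rank subprojections $Q'_1,\dots,Q'_\ell$ of $Q_1$ whose ranks match the multiplicities of the isolated eigenvalues $\lambda_j$ of the approximant $N_0$, replaces $Q_1$ by $Q_1-\sum_jQ'_j$, and forms $N'=\sum_k\beta_kQ_k+\sum_j\lambda_jQ'_j$. By Lemma~\ref{lem:continuous-masa-expectation-anhilates-compact}, $E_\A(Q'_j)=0$, so the conditional expectation is unaffected, and $N'$ has the same spectrum, essential spectrum, and finite-multiplicity data as $N_0$, hence is approximately unitarily equivalent to it by elementary Weyl--von Neumann--Berg (for normal operators with finite spectrum) without any appeal to BDF. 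Your route is legitimate but uses heavier machinery; it also requires you to verify that each $\beta_k$ genuinely retains infinite multiplicity after ``snapping'' the spectral measure of $(I-F)N(I-F)$ to the net (this does hold, since each $\beta_k\in\sigma_e(N)$ lies in the interior of its own Voronoi cell, so the corresponding spectral projection is infinite-rank), a point the paper's direct construction sidesteps entirely.
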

\begin{proof}
To see that
\[
\overline{\D_\A(N)}^{\left\|\cdot\right\|} \subseteq \{A\in\A\mid\sigma(A)\subseteq\cconv(\sigma_e(N))\},
\]
let $\mathfrak{K} \subseteq \B(\H)$ and define the completely positive map $\tilde{E}_\A : \B(H)/\mathfrak{K} \to \A$ as in the proof of Theorem \ref{thm:SH-BH-discrete}. The inclusion then follows directly from Lemma \ref{lem:spectrum-of-expectation-in-convex-hull}.

For the other inclusion, fix $A\in\A$ such that $\sigma(A)\subseteq\cconv(\sigma(N))$
and let $\epsilon>0$.  A standard approximation argument gives the existence of $A_0 \in \A$ and $N_0 \in \B(\H)$ such that $\left\|A - A_0\right\| < \epsilon$, $\left\|N - N_0\right\| < \epsilon$, $\sigma(A_0)=\{\alpha_{k}\}^m_{k=1}$,
$\sigma_e(N_0)=\{\beta_{k}\}^n_{k=1}$, $\sigma(N_0)\setminus \sigma_e(N_0) = \{\lambda_{k}\}^\ell_{k=1}$, and $\sigma(A_0) \subseteq \cconv (\sigma(N_0))$.  It suffices to prove the result for $A_0$ and $N_0$.

Observe that there are non-zero pairwise orthogonal projections $\{P_k\}^m_{k=1}\subseteq\A$ such that
$\sum_{k=1}^{m}P_{k}=I_\H$ and $A_0 =\sum_{k=1}^{m}\alpha_{k}P_{k}$.   Furthermore, since $\sigma(A_0)\subset\cconv(\sigma(N_0))$, there are
scalars $\gamma_{j,k} \in (0,1)$ such that $\sum^n_{k=1} \gamma_{j,k} = 1$ and
\[
\left|\alpha_{j} -\sum^m_{k=1} \gamma_{j,k}\beta_{k}\right| < \epsilon
\]
for all $j$. 

For each $k$, let $A_{k}=\sum_{j=1}^{m}\gamma_{j,k}P_{j}$. Then 
\[
\left\|A_0-\sum_{k=1}^{n}\beta_{k}A_{k}\right\| < \epsilon,
\]
and $\{A_k\}^n_{k=1}$ is a partition of unity of $\A$.
By Theorem \ref{thm:continuous-masa-approx-carpenter} there
are pairwise orthogonal projections $\{Q_k\}^n_{k=1} \subseteq \B(\H)$, 
each of infinite rank, such that $\sum_{k=1}^{n}Q_{k}=I_\H$, $\sigma(Q_{k})=\sigma_{e}(Q_{k})$
and $\|E_{\A}(Q_{k})-A_{k}\|<\epsilon/C$ for all $k$,
where $C=\sum_{k=1}^{n}|\beta_{k}|$.

Let $\{Q'_j\}^\ell_{j=1}$ be pairwise orthogonal, finite-rank projections such that $Q'_j \leq Q_1$ and $\mathrm{rank}(Q'_j)$ is the multiplicity of the eigenvalue $\lambda_j$ of $N$ for all $j$. Replace $Q_1$ with $Q_1 - \sum^\ell_{j=1} Q'_j$. Then $Q_{1},\ldots,Q_{n}, Q'_1, \ldots, Q'_\ell \in\B(H)$ are pairwise orthogonal projections. Moreover, $E_\A(Q'_k) = 0$ for each $k$ by Lemma \ref{lem:continuous-masa-expectation-anhilates-compact}, and hence $\left\|E_\A(Q_k) - A_k\right\| < \frac{\epsilon}{C}$ for each $k$.

Let $N'=\sum_{k=1}^{n}\beta_{k}Q_{k} + \sum^\ell_{j=1} \lambda_j Q'_j$. Then
\[
\|E_{\A}(N')-A_0\|\leq\epsilon + \sum_{k=1}^{n}|\beta_{k}|\|E_{\A}(Q_{k})-A_{k}\|< 2\epsilon.
\]
Since $N'$ is a normal normal operator such that $\sigma(N')=\sigma(N_0)$ with equal multiplicity of finite eigenvalues, $N'$ is approximately unitarily
equivalent to $N_0$.  Hence the result follows.
\end{proof}

\subsection{Type II$_1$ factors}

In this section, we proceed with a Schur-Horn type theorem for normal operators in II$_1$ factors.  The discussion varies slightly from the I$_\infty$ case due to the existence of a faithful tracial state.
Denote by $\chi_{X}(N)$ the spectral projection of a normal operator $N$ with respect to a Borel set $X$.

The next result completely characterizes the approximate diagonal of a normal operator with finite spectrum in a II$_1$ factor.

\begin{thm}
\label{thm:schur-horn-II-1}
Let $\fM$ be a type II$_1$ factor, let $\tau$ be the faithful normal trace on $\fM$, and let $\A$ be a MASA of $\fM$ with corresponding normal conditional expectation $E_\A : \fM \to \A$.  Let $N \in \fM$ be a normal operator such that $\sigma(N) = \{z_k\}^n_{k=1} \subseteq \bC$.  Then
\[
A \in \overline{\D_\A(N)}^{\|\cdot\|}
\]
if and only if there exists a partition of unity $\{A_k\}^n_{k=1}$ in $\A$ such that $\tau(A_k) = \tau(\chi_{\{z_k\}}(N))$ for each $k$, and
\[
\sum^n_{k=1} z_k A_k = A.
\]
\end{thm}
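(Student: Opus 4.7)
The plan is to prove the two inclusions separately, invoking the Approximate Multivariate Carpenter Theorem (Theorem~\ref{thm:multivariate-carpenter}) for sufficiency and a weak-$*$ compactness argument for necessity. In both directions the central objects are the spectral projections $Q_k := \chi_{\{z_k\}}(N)$, which yield the decomposition $N = \sum_{k=1}^n z_k Q_k$.

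For necessity, suppose $A$ is a norm limit of $A^{(j)} := E_\A(U_j^* N U_j)$ for some unitaries $U_j \in \fM$, and set $A_k^{(j)} := E_\A(U_j^* Q_k U_j)$. Each tuple $(A_1^{(j)}, \ldots, A_n^{(j)})$ is a partition of unity in $\A$ satisfying $\sum_k z_k A_k^{(j)} = A^{(j)}$, and since $\tau$ is unitarily invariant and preserved by $E_\A$, one has $\tau(A_k^{(j)}) = \tau(Q_k)$ for every $k$. These tuples lie in the ultraweakly compact set $\{(B_1, \ldots, B_n) \in \A^n : 0 \le B_k \le I_\fM\}$, so a subnet converges ultraweakly to some $(A_1, \ldots, A_n)$. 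Ultraweak closedness of positivity and of linear relations, combined with normality of $\tau$ and the ultraweak convergence of $A^{(j)}$ to $A$, transfers every desired property to the limit: $\{A_k\}$ is a partition of unity with $\tau(A_k) = \tau(Q_k)$ and $\sum_k z_k A_k = A$.

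For sufficiency, fix such a partition $\{A_k\}$ and $\epsilon > 0$, and put $C := 1 + \sum_{k=1}^n |z_k|$. Theorem~\ref{thm:multivariate-carpenter} produces pairwise orthogonal projections $\{P_k\}_{k=1}^n \subseteq \fM$ with $\sum_k P_k = I_\fM$, $\|E_\A(P_k) - A_k\| < \epsilon/C$, and crucially $\tau(P_k) = \tau(A_k) = \tau(Q_k)$. The decisive observation is that two families of pairwise orthogonal projections in a II$_1$ factor summing to $I_\fM$ with matching traces are jointly unitarily equivalent: choosing Murray--von Neumann partial isometries $V_k$ with $V_k^* V_k = P_k$ and $V_k V_k^* = Q_k$, their sum $V := \sum_k V_k$ is a unitary in $\fM$ (cross terms $V_j^* V_k$ vanish for $j \ne k$ because $V_k$ has image in the range of $Q_k$ while $V_j^*$ annihilates the orthogonal complement of the range of $Q_j$), and $V$ satisfies $V^* Q_k V = P_k$. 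Consequently $V^* N V = \sum_k z_k P_k$, and
\[
\|A - E_\A(V^* N V)\| \;\le\; \sum_{k=1}^n |z_k|\, \|A_k - E_\A(P_k)\| \;<\; \epsilon.
\]

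The principal technical difficulty has already been absorbed into the Approximate Multivariate Carpenter Theorem; in particular its trace-preservation clause in the II$_1$ case is precisely what unlocks the joint unitary equivalence used above. Without that clause the sufficiency argument would collapse, so the present theorem can be read as the direct Schur--Horn consequence of Theorem~\ref{thm:multivariate-carpenter} in the presence of a finite spectrum.
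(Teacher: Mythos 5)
Your proof is correct and follows essentially the same route as the paper: a weak$^*$-compactness argument for necessity and an application of Theorem~\ref{thm:multivariate-carpenter} combined with joint unitary equivalence of equal-trace orthogonal families of projections for sufficiency. The one genuine contribution you add is to make explicit the construction of the intertwining unitary $V=\sum_k V_k$ out of Murray--von Neumann partial isometries, a step the paper compresses into the phrase ``by Theorem \ref{thm:multivariate-carpenter} there exists a unitary $U$''.
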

\begin{proof}
Let $\omega_k = \tau(\chi_{\{z_k\}}(N))$ for each $k$. Suppose first that $A \in \D_\A(N)$, so that $A = E_\A(U^*NU)$ for some unitary $U \in \fM$.   For each $k$ let $P_k = \chi_{\{z_k\}}(N)$ and let $A_k = E_A(U^*P_kU) \in \A$.  Clearly $0 \leq A_k \leq I_\fM$, 
\[
\tau(A_k) = \tau(E_\A(U^*P_kU)) = \tau(P_k) = \omega_k \text{ for all }k,
\]
\[
\sum^n_{k=1} A_k = E_\A\left(U^*\left(\sum^n_{k=1} \chi_{\{z_k\}}(N)\right)U\right) = E_\A(I_\fM) = I_\fM,
\]
and 
\[
\sum^n_{k=1} z_k A_k= E_\A\left(U^*\left(\sum^n_{k=1} z_k\chi_{\{z_k\}}(N)\right)U\right) = E_\A(N) = A.
\]

Now suppose $A \in \overline{\D_\A(N)}^{\|\cdot\|}$.  Then there exists a sequence $(A^{(m)})_{m\geq 1} \subseteq \D_\A(N)$ such that $\lim_{m\to \infty} \left\|A - A^{(m)}\right\| = 0$.  For each $m \in \bN$ choose a partition of unity $\left\{A^{(m)}_k\right\}^n_{k=1}$ in $\A$ such that $\tau\left(A^{(m)}_k\right) = \omega_k$ for all $k$, 
and
\[
\sum^n_{k=1} z_k A^{(m)}_k = A^{(m)}.
\]
Since the unit ball of $\A$ is weak$^*$-compact, the result follows by taking a weak$^*$-limit over a common subnet since $\tau$ and $E_\A$ are weak$^*$-continuous.
 
For the converse direction, fix $A \in \A$ and suppose there exists a partition of unity $\{A_k\}^n_{k=1}$ in $\A$ such that  $\tau(A_k) = \omega_k$ for all $k$ and 
\[
\sum^n_{k=1} z_k A_k = A.
\]
Fix $\epsilon > 0$.  By Theorem \ref{thm:multivariate-carpenter} there exists a unitary operator $U \in \fM$ such that
\[
\left\|A_k - E_\A(U^*\chi_{\{z_k\}}(N)U)\right\| < \epsilon
\]
for all $k$.  Therefore
\[
\left\|A - E_\A(U^*NU)\right\| \leq \sum^n_{k=1} \left\|z_k (A_k -  E_\A(U^*\chi_{\{z_k\}}(N)U))\right\| \leq \epsilon \sum^n_{k=1} |z_k|.
\]
Therefore, since the $\{z_k\}^n_{k=1}$ are fixed, the result now follows.
\end{proof}
Although Theorem \ref{thm:schur-horn-II-1} only applies to normal operators with finite spectrum, it effectively solves the approximate Schur-Horn problem for normal operators via finite approximations.  For example, Theorem \ref{thm:schur-horn-II-1} implies the following result about arbitrary normal operators.
\begin{thm}
\label{thm:trace-II-1}
Let $\fM$ be a type II$_1$ factor, let $\tau$ be the faithful normal tracial state on $\fM$, and let $\A$ be a MASA.  If $N \in \fM$ is a normal operator, then
\[
\tau(N)I_\fM \in \overline{\D_\A(N)}^{\|\cdot\|}.
\]
\end{thm}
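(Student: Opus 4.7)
The plan is to reduce to the case of a normal operator with finite spectrum, where Theorem \ref{thm:schur-horn-II-1} applies directly, and then make a simple but crucial choice of partition of unity.

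First I would observe that by the spectral theorem and uniform approximation of bounded Borel functions by simple functions, for any $\epsilon > 0$ there is a normal operator $N_0 \in \fM$ with finite spectrum such that $\|N - N_0\| < \epsilon$. Write $\sigma(N_0) = \{z_k\}_{k=1}^n$ and set $\omega_k = \tau(\chi_{\{z_k\}}(N_0))$. Note that $\sum_{k=1}^n \omega_k = 1$ and $\sum_{k=1}^n z_k \omega_k = \tau(N_0)$.

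Next I would make the key (and easy) choice of partition of unity in the hypothesis of Theorem \ref{thm:schur-horn-II-1}: take simply $A_k = \omega_k I_\fM \in \A$. Then $\{A_k\}_{k=1}^n$ is a partition of unity in $\A$ because the $\omega_k$ are nonnegative reals summing to $1$, we have $\tau(A_k) = \omega_k = \tau(\chi_{\{z_k\}}(N_0))$, and moreover
\[
\sum_{k=1}^n z_k A_k = \left(\sum_{k=1}^n z_k \omega_k\right) I_\fM = \tau(N_0) I_\fM.
\]
Thus Theorem \ref{thm:schur-horn-II-1} yields $\tau(N_0) I_\fM \in \overline{\D_\A(N_0)}^{\|\cdot\|}$.

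Finally, I would transfer the conclusion from $N_0$ back to $N$ by a routine perturbation argument. Since $E_\A$ is contractive, for every unitary $U \in \fM$ we have
\[
\|E_\A(U^*NU) - E_\A(U^*N_0 U)\| \leq \|N - N_0\| < \epsilon,
\]
and moreover $|\tau(N) - \tau(N_0)| < \epsilon$. Choosing a unitary $U$ with $\|\tau(N_0) I_\fM - E_\A(U^*N_0 U)\| < \epsilon$, the triangle inequality gives $\|\tau(N) I_\fM - E_\A(U^*NU)\| < 3\epsilon$. Letting $\epsilon \to 0$ shows that $\tau(N) I_\fM \in \overline{\D_\A(N)}^{\|\cdot\|}$. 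There is really no significant obstacle here; the entire content is the observation that the constant partition of unity $A_k = \omega_k I_\fM$ has the trace values demanded by Theorem \ref{thm:schur-horn-II-1} and produces exactly $\tau(N_0) I_\fM$ as its weighted sum.
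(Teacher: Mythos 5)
Your proposal is correct and follows essentially the same path as the paper's own proof: approximate $N$ in norm by a normal operator $N_0$ with finite spectrum, apply Theorem \ref{thm:schur-horn-II-1} with the constant partition of unity $A_k = \tau(\chi_{\{z_k\}}(N_0)) I_\fM$, and transfer back by a perturbation estimate. The only cosmetic difference is that the paper first perturbs $N_0$ so that $\tau(N_0) = \tau(N)$ exactly, whereas you carry the $|\tau(N) - \tau(N_0)| < \epsilon$ error through the triangle inequality; both are fine.
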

\begin{proof}
Since $\fM$ is a von Neumann algebra, for each $\epsilon > 0$ there exists pairwise orthogonal projections $\{P_k\}^n_{k=1}\subseteq \fM$ and scalars $\{z_k\}^n_{k=1} \subseteq \bC$ such that $\sum^n_{k=1} P_k = I_\fM$ and 
\[
\left\|N - \sum^n_{k=1} z_k P_k\right\| <\epsilon.
\]
Moreover, since $\tau$ is norm-continuous, we may assume, by applying a small perturbation, that
\[
\tau\left( \sum^n_{k=1} z_k P_k  \right) = \tau(N).
\]
 
Let $N_0 = \sum^n_{k=1} z_k P_k$ so that $N_0 \in \fM$ is a normal operator with $\sigma(N_0) = \{z_k\}^n_{k=1}$ and $\tau(\chi_{\{z_k\}}(N_0)) = \tau(P_k)$ for all $k$.  If $A_k = \tau(P_k)I_\fM$ for all $k$, then clearly $0 \leq A_k \leq I_\fM$, $\tau(A_k) = \tau(P_k)$, and 
\[
\sum^n_{k=1} A_k = \sum^n_{k=1} \tau(P_k) I_\fM = I_\fM.
\]
Thus 
\[
\tau(N_0) I_\fM = \sum^n_{k=1} z_k A_k \in \overline{\D_\A(N_0)}^{\left\|\cdot\right\|}
\]
by Theorem \ref{thm:schur-horn-II-1}.  Since $\left\|N - N_0\right\| < \epsilon$, the result follows.
\end{proof}

We are now able to prove Theorem \ref{thm:three-point-spectrum-SH-II-1}.
\begin{thm}
Theorem \ref{thm:three-point-spectrum-SH-II-1} holds. 
\end{thm}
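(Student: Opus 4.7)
The plan is to reduce the problem to Theorem~\ref{thm:schur-horn-II-1}, which already gives a complete characterization of $\overline{\D_\A(N)}^{\|\cdot\|}$ for normal operators with finite spectrum in a type II$_1$ factor. Since $\sigma(N) = \{z_1, z_2, z_3\}$ consists of three points, Theorem~\ref{thm:schur-horn-II-1} is directly applicable; the non-collinearity hypothesis will be used in a crucial way when verifying its hypotheses.

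For the forward inclusion, suppose $A = E_\A(U^*NU)$ for some unitary $U \in \fM$. The trace condition $\tau(A) = \tau(N)$ holds because $E_\A$ is trace-preserving, and the spectral condition $\sigma(A) \subseteq \cconv(\sigma(N))$ follows from Corollary~\ref{cor:approx-diagonals-have-spectrum-in-convex-hull}. Both conditions persist under norm limits since $\tau$ is norm-continuous and $\cconv(\sigma(N))$ is closed (combined with upper semicontinuity of the spectrum).

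For the reverse inclusion, fix $A \in \A$ with $\tau(A) = \tau(N)$ and $\sigma(A) \subseteq \cconv(\{z_1, z_2, z_3\})$. Non-collinearity of $z_1, z_2, z_3$ provides unique continuous barycentric coordinate functions $\lambda_k : \cconv(\{z_1,z_2,z_3\}) \to [0,1]$, for $k=1,2,3$, satisfying $\sum_{k=1}^3 \lambda_k(w) = 1$ and $\sum_{k=1}^3 \lambda_k(w) z_k = w$ for every $w$ in the triangle. Applying continuous functional calculus (which remains inside the abelian algebra $\A$), set $A_k := \lambda_k(A) \in \A$. Then $\{A_k\}_{k=1}^3$ is a partition of unity in $\A$ with $\sum_{k=1}^3 z_k A_k = A$.

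The key step, and the only one that genuinely exploits non-collinearity, is checking that $\tau(A_k) = \omega_k := \tau(\chi_{\{z_k\}}(N))$ for each $k$. The tuples $(\tau(A_1), \tau(A_2), \tau(A_3))$ and $(\omega_1, \omega_2, \omega_3)$ in $\bR^3$ both satisfy the linear relations $\sum_k x_k = 1$ and $\sum_k z_k x_k = \tau(N)$ (where the latter complex equation yields two real equations once real and imaginary parts are separated). The resulting $3 \times 3$ real coefficient matrix with rows $(1,1,1)$, $(\re z_1,\re z_2,\re z_3)$, and $(\operatorname{Im} z_1,\operatorname{Im} z_2,\operatorname{Im} z_3)$ is invertible precisely when $z_1, z_2, z_3$ are non-collinear, forcing the two solution tuples to coincide. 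An invocation of Theorem~\ref{thm:schur-horn-II-1} then delivers $A \in \overline{\D_\A(N)}^{\|\cdot\|}$, completing the proof. I expect the trace computation via the non-collinearity of the spectrum to be the only substantive step; everything else is routine verification built on already-established machinery.
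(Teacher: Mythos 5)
Your proof is correct, but it takes a cleaner, more conceptual route than the paper's own argument. The paper proves this theorem by a direct construction: after translating and scaling so that $\sigma(N)=\{0,1,z\}$ with $\operatorname{Im} z>0$, it approximates $A$ by a finite-spectrum element $\sum_j \alpha_j P_j$ with $\alpha_j$ in the triangle and matching trace, writes each $\alpha_j$ as a convex combination $\gamma_{j,0}\cdot 0+\gamma_{j,1}\cdot 1+\gamma_{j,z}\cdot z$, uses the same non-collinearity observation (comparing real and imaginary parts) to force the weighted sums $\sum_j\gamma_{j,\bullet}\tau(P_j)$ to match the spectral weights of $N$, then subdivides each $P_j$ into subprojections of the prescribed traces to build a normal $N'\in\A$ with the same spectral distribution as $N$, and finally applies Theorem~\ref{thm:trace-II-1} on each compression $P_j\fM P_j$ to flatten each corner to a scalar. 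You instead reduce the entire problem to Theorem~\ref{thm:schur-horn-II-1} by producing the required partition of unity $\{A_k\}$ via barycentric-coordinate functional calculus (which stays inside $\A$) and checking $\tau(A_k)=\omega_k$ through the invertibility of the $3\times 3$ matrix with rows $(1,1,1)$, $(\re z_1,\re z_2,\re z_3)$, $(\operatorname{Im} z_1,\operatorname{Im} z_2,\operatorname{Im} z_3)$, which is nondegenerate precisely by non-collinearity. Your route is shorter, avoids the explicit subdivision of projections and the appeal to Theorem~\ref{thm:trace-II-1}, and isolates more transparently where non-collinearity enters; the paper's route is more self-contained in that it rebuilds the unitary by hand and exhibits the intermediate normal operator $N'$ explicitly. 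Both arguments ultimately hinge on the same linear-algebra fact about the barycentric coefficient matrix.
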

\begin{proof}
One inclusion follows directly from Corollary \ref{cor:approx-diagonals-have-spectrum-in-convex-hull} and the fact that $E_\A$ is a trace-preserving, completely positive map.

For the converse, note that the conclusions of the theorem are unchanged if we replace $N$ by a scalar translation or a non-zero scalar multiple.  Thus we may assume $\sigma(N) = \{0, 1, z\}$ where $Im(z) > 0$.   The spectral distribution of $N$ may then be written as $\beta_0 \delta_0 + \beta_1 \delta_1 + \beta_z \delta_z$, where $\delta_x$ represents the point-mass measure at $x$ and $\beta_0, \beta_1, \beta_z \in (0,1)$ are such that $\beta_0 +\beta_1 + \beta_z = 1$.
 
Fix $A \in \mathcal{A}$ such that $\tau(A) = \tau(N)$ and $\sigma(A) \subseteq \mbox{conv}(\sigma(N))$.  Let $\epsilon > 0$.  Since $\mathfrak{M}$ is a type II$_1$ factor, there exists pairwise orthogonal projections $\{P_j\}^n_{j=1}\subseteq \mathcal{A}$ and scalars $\{\alpha_j\}^n_{j=1} \subseteq \mathbb{C}$ such that $\sum^n_{j=1} P_j = I_\mathfrak{M}$ and 
\[
\left\|A - \sum^n_{j=1} \alpha_j P_j\right\| < \epsilon.
\]
Furthermore, by refining the approximation, we may assume in addition that $\{\alpha_j\}^n_{j=1} \subseteq \mbox{conv}(\sigma(N))$ and
\[
\tau\left( \sum^n_{j=1} \alpha_j P_j  \right) = \tau(N).
\]

Since $\alpha_j \in \mbox{conv}(\sigma(N))$ for each $j$, there exists scalars $\gamma_{j,0}, \gamma_{j,1}, \gamma_{j,z} \in [0,1]$ such that $\gamma_{j,0}+ \gamma_{j,1}+ \gamma_{j,z} = 1$ and $\alpha_j = \gamma_{j,0} 0 + \gamma_{j,1} 1  +\gamma_{j,z} z$.  Thus
\[
\beta_0 0 + \beta_1 1 + \beta_z z = \tau(N) = \sum^n_{j=1} (\gamma_{j,0} 0 + \gamma_{j,1} 1  +\gamma_{j,z} z)\tau(P_j).
\]
Since $Im(z) > 0$, the above equation implies that
\[
\beta_z = \sum^n_{j=1} \gamma_{j,z}\tau(P_j).
\]
By analyzing the real part, we also obtain that
\[
\beta_1 = \sum^n_{j=1} \gamma_{j,1} \tau(P_j).
\]
Furthermore, since $\gamma_{j,0}+ \gamma_{j,1}+ \gamma_{j,z} = 1$ for each $j$ and since $\beta_0 +\beta_1 + \beta_z = 1$, we obtain that
\[
\beta_0 = \sum^n_{j=1} \gamma_{j,0} \tau(P_j).
\]

Based on the above equations and the fact that $\mathcal{A}$ is a MASA in $\mathfrak{M}$, for each $j$ there exists pairwise orthogonal projections $\{P_{j,0}, P_{j,1}, P_{j,z}\} \subseteq P_j\mathcal{A}P_j$ such that $P_j = P_{j,0} + P_{j,1} + P_{j,z}$,
\[
\tau(P_{j,0}) = \gamma_{j,0}\tau(P_j), \quad \tau(P_{j,1}) = \gamma_{j,1}\tau(P_j), \qand \tau(P_{j,z}) = \gamma_{j,z} \tau(P_j).
\]
Therefore, if 
\[
N' = \sum^n_{j=1} 0 P_{j,0} + 1 P_{j,1} + z P_{j,z},
\]
then $N' \in \mathcal{A}$ is a normal operator with the same spectral distribution as $N$.  Thus, as $\sigma(N)$ is finite, there exists a unitary operator $U \in \mathfrak{M}$ such that $N' = U^*NU$.
 
Notice that
\[
P_j N'P_j =0 P_{j,0} + 1 P_{j,1} + z P_{j,z}
\]
and
\[
\tau(0 P_{j,0} + 1 P_{j,1} + z P_{j,z}) = \gamma_{j,0}\tau(P_j) 0 + \gamma_{j,1}\tau(P_j) + \gamma_{j,z}\tau(P_j) = \alpha_j \tau(P_j).
\]
Hence, by applying Theorem \ref{thm:trace-II-1} to the type II$_1$ factor $P_j\mathfrak{M}P_j$, the MASA $P_j\mathcal{A}P_j$, and the normal operator $P_j N'P_j \in P_j\mathfrak{M}P_j$, there exists a unitary $W_j \in P_j\mathfrak{M}P_j$ such that 
\[
\left\|E_{P_j\mathcal{A}P_j}(W_j^*P_j N'P_jW_j) - \alpha_jP_j\right\| < \epsilon.
\]
Therefore $W = \sum^n_{j=1} W_j \in \mathfrak{M}$ is a unitary operator such that
\[
\begin{array}{l}
\left\|E_\mathcal{A}(W^*N'W)- \sum^n_{j=1} \alpha_j P_j\right\| \\ 
=  \left\|\sum^n_{j=1} E_{P_j\mathcal{A}P_j}(W_j^*P_j N'P_jW_j)- \sum^n_{j=1} \alpha_j P_j\right\|\\
 = \max_{1\leq j \leq n}\{\left\|E_{P_j\mathcal{A}P_j}(W_j^*P_j N'P_jW_j)- \alpha_j P_j\right\| \} < \epsilon.
 \end{array} 
\]
This gives
\begin{align*}
& \left\|E_\mathcal{A}(W^*U^*NUW) - A\right\| \\
&  \leq \left\|E_\mathcal{A}(W^*N'W)- \sum^n_{j=1} \alpha_j P_j\right\| + \left\|A- \sum^n_{j=1} \alpha_j P_j\right\| < 2\epsilon.
\end{align*}
Since $UW \in \mathfrak{M}$ is a unitary operator, the result follows.
\end{proof}

We immediately obtain the following corollary.
\begin{cor}
Let $\mathfrak{M}$ be a type II$_1$ factor, let $\tau$ be the faithful tracial state on $\mathfrak{M}$, let $\mathcal{A}$ be a MASA in $\mathfrak{M}$ with corresponding normal conditional expectation $E_\mathcal{A} : \mathfrak{M} \to \mathcal{A}$, and let $A \in \mathcal{A}$.  Then there exists a normal operator $N \in \mathfrak{M}$ such that $\sigma(N)$ contains precisely three points and
\[
A \in \overline{\D_\A(N)}^{\|\cdot\|}.
\]
\end{cor}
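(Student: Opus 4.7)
The statement will follow almost immediately from Theorem~\ref{thm:three-point-spectrum-SH-II-1}. My plan is to produce, for any given $A \in \mathcal{A}$, three non-collinear points $z_1, z_2, z_3 \in \mathbb{C}$ and a normal operator $N \in \mathfrak{M}$ with $\sigma(N) = \{z_1, z_2, z_3\}$ so that the two hypotheses of Theorem~\ref{thm:three-point-spectrum-SH-II-1}, namely $\sigma(A) \subseteq \cconv(\sigma(N))$ and $\tau(A) = \tau(N)$, are satisfied. Then the conclusion of that theorem immediately yields $A \in \overline{\D_\mathcal{A}(N)}^{\|\cdot\|}$.

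First, I would choose the three points $z_1, z_2, z_3$ as the vertices of a sufficiently large equilateral triangle centered at $\tau(A)$ so that both $\sigma(A)$ and $\tau(A)$ lie in the \emph{interior} of $\cconv(\{z_1, z_2, z_3\})$. This is possible: if $M > 0$ is chosen so that $\sigma(A) \cup \{\tau(A)\}$ lies in the open disk of radius $M$ about $\tau(A)$, then the equilateral triangle with circumradius $3M$ centered at $\tau(A)$ has inradius exceeding $M$, so the disk (and hence $\sigma(A)$ and $\tau(A)$) is contained in its interior. In particular, the vertices are non-collinear and $\sigma(A) \subseteq \cconv(\{z_1, z_2, z_3\})$.

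Next, since $\tau(A)$ lies in the interior of the triangle, its barycentric coordinates with respect to $z_1, z_2, z_3$ give strictly positive scalars $\beta_1, \beta_2, \beta_3 \in (0,1)$ with $\beta_1 + \beta_2 + \beta_3 = 1$ and $\beta_1 z_1 + \beta_2 z_2 + \beta_3 z_3 = \tau(A)$. Since $\mathfrak{M}$ is a type II$_1$ factor, I can realize any prescribed triple of non-negative traces summing to $1$ by pairwise orthogonal projections summing to the identity; take such projections $P_1, P_2, P_3 \in \mathfrak{M}$ with $\tau(P_k) = \beta_k$, and set $N := z_1 P_1 + z_2 P_2 + z_3 P_3$. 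Then $N$ is a normal operator in $\mathfrak{M}$ with $\sigma(N) = \{z_1, z_2, z_3\}$ (three non-collinear points, because each $\beta_k > 0$), and $\tau(N) = \sum_k \beta_k z_k = \tau(A)$.

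Finally, with these choices, both hypotheses of Theorem~\ref{thm:three-point-spectrum-SH-II-1} are satisfied, so $A \in \overline{\D_\mathcal{A}(N)}^{\|\cdot\|}$, as required. There is no real obstacle here; the only mild point worth care is forcing $\tau(A)$ into the \emph{interior} of the triangle so that the constructed $N$ genuinely has three spectral points (rather than degenerating to one or two when $\tau(A)$ lands on an edge or vertex), and the equilateral triangle construction above handles this uniformly.
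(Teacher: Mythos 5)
Your proof is correct and is essentially the intended argument: the paper simply asserts the corollary is immediate from Theorem~\ref{thm:three-point-spectrum-SH-II-1}, and the content behind that assertion is exactly what you've supplied — choose three non-collinear points whose triangle contains $\sigma(A)$ with $\tau(A)$ in the interior, build $N$ from orthogonal projections with traces given by the barycentric coordinates of $\tau(A)$, and apply the theorem. Your care in forcing $\tau(A)$ into the \emph{interior} (so all three $\beta_k > 0$ and the spectrum of $N$ really has three points) is the right detail to attend to, and the equilateral-triangle-with-large-circumradius construction handles it cleanly.
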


\begin{example}
\label{ex:four-noncollinear-points}
Theorem \ref{thm:three-point-spectrum-SH-II-1} may not be improved to normal operators with four points in their spectrum.  Indeed consider the normal operator $N$ whose spectral distribution is $\frac{1}{4}(\delta_0 + \delta_1 + \delta_i + \delta_{1+i})$ and the normal operator $A$ in the MASA $\mathcal{A}$ whose spectral distribution is $\frac{1}{2}(\delta_0 + \delta_{1+i})$.  It is clear that $\sigma(A) \subseteq \mbox{conv}(\sigma(N))$ and $\tau(A) = \tau(N)$.

If $A \in \overline{\D_\A(N)}^{\|\cdot\|}$, then there are subprojections $P_0, P_1, P_i, P_{i+1}$ of $\chi_{\{0\}}(N)$, $\chi_{\{1\}}(N)$, $\chi_{\{i\}}(N)$, and $\chi_{\{1+i\}}(N)$ respectively such that 
\[
\tau(P_0) + \tau(P_1) + \tau(P_i) + \tau(P_{i+1}) = \frac{1}{2}
\]
and
\[
0\tau(P_0) + 1\tau(P_1) + i\tau(P_i) + (1+i)\tau(P_{i+1}) = 0.
\]
However, this is clearly impossible, since $0$ is an extreme point of the square with vertices $\{0,1,1+i, i\}$, and the trace of the spectral projection of $N$ corresponding to $0$ is $\frac{1}{4}$.
 
Note that the pair $A$, $N$ do not satisfy the assumptions of Theorem \ref{thm:schur-horn-II-1}.
\end{example}

\subsection{Type II$_\infty$ factors}

The following analogue of Theorem \ref{thm:schur-horn-II-1} for II$_\infty$ factors can be proved in the same way using the version of Theorem \ref{thm:multivariate-carpenter} for II$_\infty$ factors.
\begin{thm}
Let $\fM$ be a type II$_\infty$ factor with separable predual, let $\tau$ be the semifinite normal trace on $\fM$, and let $\A$ be a MASA of $\fM$ with corresponding normal conditional expectation $E_\A : \fM \to \A$.  Let $N \in \fM$ be a normal operator such that $\sigma(N) = \{z_k\}^n_{k=1}$. Then
\[
A \in \overline{D_\A(N)}^{\|\cdot\|}
\]
if and only if there exists a partition of unity $\{A_k\}^n_{k=1}$ of $\A$ such that $\tau(A_k) = \tau(\chi_{\{z_k\}}(N))$ for all $k$ and
\[
\sum^n_{k=1} z_k A_k = A.
\]
\end{thm}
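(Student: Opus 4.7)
The plan is to imitate the proof of Theorem \ref{thm:schur-horn-II-1} essentially verbatim, replacing the II$_1$ version of the approximate multivariate Carpenter theorem by the II$_\infty$ version (Theorem \ref{thm:approx-carpenter-II-infty}). The only substantive adjustments come from $\tau$ being a semifinite trace rather than a finite one.

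For the necessity direction, I would first check the case $A = E_\A(U^*NU) \in \D_\A(N)$: setting $A_k = E_\A(U^*\chi_{\{z_k\}}(N)U)$ yields a partition of unity in $\A$ because $E_\A$ is unital, positive, and $\sum_k \chi_{\{z_k\}}(N) = I_\fM$. The conditional expectation onto a MASA in a semifinite factor preserves the trace on $\fM_+$ (a standard consequence of normality and uniqueness of the trace up to scaling on a factor), so $\tau(A_k) = \tau(\chi_{\{z_k\}}(N))$ as desired, and $\sum_k z_k A_k = E_\A(U^*NU) = A$ by the spectral decomposition of $N$. To extend to the norm closure, take a sequence $A^{(m)} \to A$ with associated partitions $\{A^{(m)}_k\}$ and pass to a common weak$^*$-convergent subnet, using weak$^*$-compactness of the unit ball of $\A$; weak$^*$-continuity of $E_\A$ then gives that the limits $\{A_k\}$ form a partition of unity in $\A$ satisfying $\sum_k z_k A_k = A$.

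For the sufficiency direction, given a partition of unity $\{A_k\}$ with $\tau(A_k) = \tau(\chi_{\{z_k\}}(N))$ and $\sum_k z_k A_k = A$, fix $\epsilon > 0$ and invoke Theorem \ref{thm:approx-carpenter-II-infty} to obtain pairwise orthogonal projections $\{P_k\}^n_{k=1} \subseteq \fM$ with $\sum_k P_k = I_\fM$, $\tau(P_k) = \tau(A_k) = \tau(\chi_{\{z_k\}}(N))$, and $\|E_\A(P_k) - A_k\| < \epsilon$. Setting $N' := \sum_k z_k P_k$ produces a normal operator with the same spectral distribution as $N$. Because $\fM$ has separable predual and $\fM$ is a factor, any two normal operators with identical spectral distributions are unitarily equivalent, so $N' = U^*NU$ for some unitary $U \in \fM$. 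The estimate
\[
\|E_\A(U^*NU) - A\| = \left\|\sum_k z_k E_\A(P_k) - \sum_k z_k A_k \right\| \leq \epsilon \sum_k |z_k|
\]
then gives $A \in \overline{\D_\A(N)}^{\|\cdot\|}$.

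The main obstacle is verifying the trace identities $\tau(A_k) = \tau(\chi_{\{z_k\}}(N))$ for the limit partition in the necessity direction, since $\tau$ is no longer a bounded weak$^*$-continuous state but merely a normal semifinite trace, hence only lower semicontinuous in the weak$^*$ topology on $\fM_+$. Lower semicontinuity immediately supplies $\tau(A_k) \leq \tau(\chi_{\{z_k\}}(N))$ for each $k$. For the reverse inequality in the indices $k$ with finite trace $\alpha_k := \tau(\chi_{\{z_k\}}(N)) < \infty$, the elements $A^{(m)}_k$ lie in the closed trace-ball $\{T \in \fM_+ : \tau(T) \leq \alpha_k\}$, which is weak$^*$-closed by normality of $\tau$; so the limit $A_k$ also has trace at most $\alpha_k$, and combining with $\sum_k A_k = I_\fM$, $\sum_k A^{(m)}_k = I_\fM$, and the finite-trace equalities forces equality throughout. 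For indices $k$ with $\tau(\chi_{\{z_k\}}(N)) = \infty$, the remaining $A_j$ ($j\neq k$) jointly have finite trace only if $\tau(I_\fM - A_k)<\infty$, which combined with $A_k\leq I_\fM$ and the previous step again yields the required equality.
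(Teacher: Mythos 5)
The paper's own ``proof'' is just the remark that the II$_\infty$ statement can be established in the same way as Theorem \ref{thm:schur-horn-II-1}, and your sufficiency direction does follow that script correctly: the II$_\infty$ case of Theorem \ref{thm:multivariate-carpenter} supplies nonzero projections $P_k$ with $\tau(P_k)=\tau(A_k)=\tau(\chi_{\{z_k\}}(N))$, and in a factor with separable predual this forces $\{P_k\}$ and $\{\chi_{\{z_k\}}(N)\}$ to be simultaneously unitarily conjugate, giving the required estimate.

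The gap is in the necessity direction, precisely at the point you flag as ``the main obstacle.'' You are right that $\tau$ is only weak$^*$-lower-semicontinuous here, but the patch does not close the gap: the closed-trace-ball observation $\{T\in\fM_+:\tau(T)\leq\alpha_k\}$ being weak$^*$-closed is \emph{the same} as lower semicontinuity and therefore yields $\tau(A_k)\leq\alpha_k$ once more, not the reverse inequality. The ensuing claim that combining with $\sum_kA_k=I_\fM$ ``forces equality throughout'' is unfounded, because $\tau(I_\fM)=\infty$, so there is no finite total to conserve. Indeed necessity genuinely fails as stated: take $n=2$, $z_1=0$, $z_2=1$, $\tau(\chi_{\{0\}}(N))=1$ and $\tau(\chi_{\{1\}}(N))=\infty$, so $N=\chi_{\{1\}}(N)$. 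Using Lemma \ref{lem:scalar-case-II-1} on a compression $Q\fM Q$ by a projection $Q\in\A$ of large finite trace $N_0$ (and the fact that in a factor with separable predual projections of equal trace are equivalent), one produces unitaries $U_m$ with $\|E_\A(U_m^*\chi_{\{0\}}(N)U_m)\|<1/m$, hence $E_\A(U_m^*NU_m)=I_\fM-E_\A(U_m^*\chi_{\{0\}}(N)U_m)\to I_\fM$ in norm. Thus $I_\fM\in\overline{\D_\A(N)}^{\|\cdot\|}$, yet the only partition of unity with $0\cdot A_1+1\cdot A_2=I_\fM$ is $A_1=0$, $A_2=I_\fM$, which has $\tau(A_1)=0\neq 1$. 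So the ``only if'' implication of the theorem as stated is false, and no repair of the weak$^*$-limit argument can succeed; the correct II$_\infty$ statement must relax the trace constraints on the finite-trace spectral components (or, in analogy with the I$_\infty$ case, be phrased in terms of the essential spectrum modulo the ideal generated by finite-trace projections). Your proof is therefore incorrect, but so is the paper's assertion that the II$_1$ argument transfers verbatim.
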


\subsection{Type III factors}

Unlike factors of type I$_\infty$ and II, the set of approximate diagonals of a normal operator in a factor of type $III$ is easily described.

\begin{thm}
\label{thm:schur-horn-III}
Let $\fM$ be a type III factor, let $\A$ be a MASA of $\fM$ with corresponding conditional expectation $E_\A : \fM \to \A$, and let $N \in \fM$ be a normal operator.  Then
\[
A \in \overline{\D_\A(N)}^{\|\cdot\|} \text{ if and only if }\sigma(A) \subseteq \cconv(\sigma(N)).
\]
\end{thm}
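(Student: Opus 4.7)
The forward direction $\overline{\D_\A(N)}^{\|\cdot\|}\subseteq\{A\in\A\mid\sigma(A)\subseteq\cconv(\sigma(N))\}$ is immediate from Corollary \ref{cor:approx-diagonals-have-spectrum-in-convex-hull}, so the entire content is the reverse inclusion: given $A\in\A$ with $\sigma(A)\subseteq\cconv(\sigma(N))$ and $\epsilon>0$, I need to produce a unitary $U\in\fM$ such that $\|A-E_\A(U^*NU)\|<C\epsilon$ for some absolute constant $C$.

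The plan is to reduce to a finite-spectrum approximation and then invoke the type III case of Theorem \ref{thm:multivariate-carpenter} (i.e., Theorem \ref{thm:approx-carpenter-III}) together with the fact that in a type III factor any two non-zero projections are Murray-von Neumann equivalent. First, I would use Borel functional calculus to perturb $N$ to a normal operator $N_0=\sum_{k=1}^n z_k P_k$ with $\|N-N_0\|<\epsilon$, where $\{P_k\}_{k=1}^n$ are the (necessarily non-zero, hence pairwise equivalent to $I$) spectral projections of $N_0$ and $\{z_k\}_{k=1}^n=\sigma(N_0)$. Likewise I would perturb $A$ to $A_0\in\A$ with $\sigma(A_0)$ finite and $\sigma(A_0)\subseteq\cconv(\sigma(N_0))$, writing $A_0=\sum_{j=1}^m \alpha_j R_j$ for pairwise orthogonal projections $\{R_j\}\subseteq\A$ summing to $I_\fM$.

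Since each $\alpha_j$ lies in $\cconv\{z_k\}$, I can write $\alpha_j=\sum_{k=1}^n\gamma_{j,k}z_k$ with $\gamma_{j,k}\in[0,1]$ and $\sum_k\gamma_{j,k}=1$; a small additional perturbation ensures $\gamma_{j,k}>0$ for all $j,k$. Setting $A_k:=\sum_{j=1}^m \gamma_{j,k}R_j\in\A$ gives a partition of unity in $\A$ with each $A_k$ bounded below away from $0$, and
\[
A_0=\sum_{k=1}^n z_k A_k.
\]
Now I apply Theorem \ref{thm:approx-carpenter-III} to $\{A_k\}_{k=1}^n$ to obtain pairwise orthogonal projections $\{Q_k\}_{k=1}^n\subseteq\fM$ with $\sum Q_k=I_\fM$ and $\|E_\A(Q_k)-A_k\|<\epsilon/(\sum_k|z_k|+1)$. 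Since each $A_k$ is bounded below away from $0$ and the $E_\A(Q_k)$ are close to them, each $Q_k$ is non-zero.

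The final step exploits that $\fM$ is type III: all non-zero projections in $\fM$ are Murray-von Neumann equivalent, so for each $k$ there is a partial isometry $v_k$ with $v_k^*v_k=P_k$ and $v_kv_k^*=Q_k$, and $U:=\sum_{k=1}^n v_k^*$ is a unitary in $\fM$ satisfying $U^*P_kU=Q_k$ for all $k$. Consequently $U^*N_0U=\sum_k z_kQ_k$, and
\[
\|E_\A(U^*N_0U)-A_0\|=\Bigl\|\sum_{k=1}^n z_k\bigl(E_\A(Q_k)-A_k\bigr)\Bigr\|<\epsilon.
\]
Combining with $\|N-N_0\|<\epsilon$ and $\|A-A_0\|<\epsilon$ yields $\|E_\A(U^*NU)-A\|<3\epsilon$, completing the proof. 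The only non-routine ingredient is ensuring non-triviality of the projections $Q_k$ so that the type III unitary-equivalence step applies; the bound $\gamma_{j,k}>0$ together with the smallness of the carpenter-approximation error takes care of this, so there is no real obstacle beyond careful bookkeeping.
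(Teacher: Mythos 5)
Your proof is correct, but it takes a genuinely different route from the paper's. Both arguments start the same way: reduce by Corollary \ref{cor:approx-diagonals-have-spectrum-in-convex-hull} to the reverse inclusion, and approximate $A$ and $N$ by finite-spectrum operators $A_0=\sum_j\alpha_j R_j$ and $N_0=\sum_k z_k P_k$. After that, however, the paper works \emph{locally}: it splits each spectral projection $R_j$ of $A_0$ into $n$ non-zero sub-projections $R_{j,k}$ inside the MASA $\A$ (using that $\A$ is diffuse in a type III factor), forms the intermediate normal operator $N'=\sum_{j,k}z_k R_{j,k}$, which has the same finite spectrum as $N_0$ and hence is unitarily conjugate to it, and then applies the scalar type III Carpenter theorem within each compression $R_j\fM R_j$ to flatten the diagonal of $R_jN'R_j$ to a constant near $\alpha_j$. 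You instead work \emph{globally}: you build the partition of unity $\{A_k\}$ in $\A$ directly from the convexity coefficients $\gamma_{j,k}$, apply Theorem \ref{thm:approx-carpenter-III} once to obtain projections $\{Q_k\}$ with $E_\A(Q_k)\approx A_k$, and then use the type III fact that two partitions of the identity into non-zero projections are simultaneously unitarily conjugate (via $U=\sum v_k^*$) to carry $\{P_k\}$ onto $\{Q_k\}$. Your route avoids the intermediate operator $N'$ and the compression argument, at the cost of needing a second, independent invocation of the type III structure (the simultaneous conjugacy of the two partitions); the paper's route is structured to parallel the II$_1$ proof in Theorem \ref{thm:schur-horn-II-1}. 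You also correctly identify the one point that must be checked for the type III conjugacy to apply, namely that each $Q_k\neq 0$, and your perturbation to $\gamma_{j,k}>0$ handles it cleanly.
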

\begin{proof}
If $A \in \overline{\D_\A(N)}^{\|\cdot\|}$ then $\sigma(A) \subseteq \cconv(\sigma(N))$ by Corollary \ref{cor:approx-diagonals-have-spectrum-in-convex-hull}.
 
For the converse direction, fix $A \in \A$ and $N \in \fM$ such that $\sigma(A) \subseteq \cconv(\sigma(N))$.  Let $\epsilon > 0$.  Since $\A$ is a MASA in the type III factor $\fM$, there exists a normal operator $A_0 \in \A$ such that $\sigma(A_0)$ is a finite set, $\sigma(A_0) \subseteq \sigma(A) \subseteq \cconv(\sigma(N))$, and $\left\|A_0 - A\right\| < \epsilon$.  Furthermore, since $\sigma(A_0)$ is a finite set and $\fM$ is a type III factor, there exists a normal operator $N_0 \in \fM$ such that $\sigma(N_0)$ is finite, $\left\|N_0 - N\right\| < \epsilon$, and $\sigma(A_0) \subseteq \cconv(\sigma(N_0))$.
 
Let $\{\alpha_j\}^\ell_{j=1} = \sigma(A_0)$ and let $\{\beta_k\}^n_{k=1} = \sigma(N_0)$.  For each $j = 1\,\ldots, \ell$ let $P_j = \chi_{\{\alpha_j\}}(A_0)$.  Then $\{P_j\}^\ell_{j=1}$ is a collection of pairwise orthogonal, non-zero projections that sum to $I_\fM$.  Since $\fM$ is a type III factor, for each $j$ there exists a collection of pairwise orthogonal, non-zero projections $\{P_{j,k} \, \mid \, k  = 1,\ldots, n\} \subseteq \A$ such that $\sum^n_{k=1} P_{j,k} = P_j$.  Define
\[
N' = \sum^\ell_{j=1} \sum^n_{k=1} \beta_j P_{j,k}.
\]
The operator $N'$ is clearly normal and $\sigma(N') = \sigma(N_0)$.  Thus there exists a unitary $U \in \fM$ such that $U^*N_0U = N'$.
 
Fix $j \in \{1,\ldots, \ell\}$.  By construction, $N'$ commutes with $P_j$ and $\{\beta_k\}^n_{k=1} = \sigma(N_0)$ is the spectrum of $P_jN'P_j$ inside of $P_j \fM P_j$.  Since $\alpha \in \cconv(\sigma(N_0)) = \cconv(\{\beta_k\}^n_{k=1})$, there exists $\{\gamma_k\}^n_{k=1} \subseteq [0,1]$ such that $\sum^n_{k=1} \gamma_k = 1$ and 
\[
\sum^n_{k=1} \gamma_k \beta_k = \alpha_j.
\]
Hence by Theorem \ref{thm:approx-carpenter-III} implies there exists a unitary $W_j \in P_j\fM P_j$ such that
\[
\left\|\gamma_k P_j - E_{P_j\A P_j}(W_j^*P_{j,k}W_j)\right\| < \frac{\epsilon}{n(M+1)},
\]
where  $M = \max_k|\beta_k|$.  Hence
\begin{align*}
\big\|\alpha_j P_j - E_{P_j\A P_j}(W_j^* P_j&N'P_jW_j)\big\| \\
&\leq \sum^n_{j=1} \big\|\gamma_k \beta_k P_j - \beta_k E_{P_j\A P_j}(W_j^*P_{j,k}W_j)\big\| \\
& \leq \sum^n_{j=1} |\beta_k| \frac{\epsilon}{n(M+1)}  \\
& < \epsilon.
\end{align*}
 
By applying the above construction for each $j$, we obtain that $W = \sum^\ell_{j=1} W_j \in \fM$ is a unitary operator satisfying
\begin{align*}
\left\|E_\A(W^*N'W)- A_0\right\| &=  \left\|\sum^\ell_{j=1} E_{P_j\A P_j}(W_j^*P_j N'P_jW_j)- \alpha_j P_j\right\|\\
 &= \max_{1\leq j \leq \ell}\left\|E_{P_j\A P_j}(W_j^*P_j N'P_jW_j)- \alpha_j P_j\right\| 
 \leq \epsilon.\\
\end{align*}
Thus $\left\|E_\A(W^*U^*N_0UW) - A_0\right\|\leq \epsilon$, so
\[
\left\|E_\A(W^*U^*NUW) - A\right\|\leq 3\epsilon.
\]
Since $UW \in \fM$ is a unitary operator, the result follows.
\end{proof}

\subsection{Cuntz C*-algebras}

It follows from Remark \ref{rem:UHF-no-SH} that, in general, one cannot hope to have any kind of tracial condition in a multivariate Carpenter's theorem for UHF C$^*$-algebras. This precludes us from applying the techniques used in previous sections to arbitrary UHF C$^*$-algebras in order to obtain a Schur-Horn type theorem for normal operators.

Therefore, we restrict our attention to a setting where the trace should not matter, namely the Cuntz algebra $\mathcal{O}_2$.  It is well-known that there exists a copy of the $2^\infty$-UHF C$^*$-algebra $\mathcal{F}_2$ in $\mathcal{O}_2$, and a conditional expectation $E_{\mathcal{F}_2} : \mathcal{O}_2 \to \mathcal{F}_2$.  In particular, if $E : \mathcal{F}_2 \to \mathfrak{D}$ is the expectation onto the diagonal, then the map $E_\mathfrak{D} : \mathcal{O}_2 \to \mathfrak{D}$ defined by $E_\mathfrak{D} = E \circ E_{\mathcal{F}_2}$ is a conditional expectation onto the diagonal.  For this expectation, we obtain the following result.

\begin{thm}
\label{thm:approx-carpenter-O2}
Let $E_\mathfrak{D} : \mathcal{O}_2 \to \mathfrak{D}$ be the conditional expectation described above and let $N \in \mathcal{O}_2$ be a normal operator such that $\sigma(N) = \{\alpha_k\}^n_{k=1}$.  Then
\[
\overline{\D_\mathfrak{D}(N)}^{\left\|\cdot\right\|} = \overline{\left\{ \left. \sum^n_{k=1} \alpha_k A_k \, \right| \, \{A_k\}^n_{k=1} \text{ a partition of unity in } \mathfrak{D}\right\}  }^{\left\|\cdot\right\|}.
\]
\end{thm}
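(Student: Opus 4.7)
The overall plan is to adapt the strategy used in the proof of Theorem \ref{thm:schur-horn-II-1}, replacing Theorem \ref{thm:multivariate-carpenter} with its UHF analogue (Theorem \ref{thm:UHF-MVCT}), and substituting the pure infiniteness of $\mathcal{O}_2$ for the trace-preservation mechanism that was available in a II$_1$ factor. Throughout, let $P_k := \chi_{\{\alpha_k\}}(N)$ for $k = 1, \ldots, n$; these spectral projections lie in $\ca(N) \subseteq \mathcal{O}_2$ because $\sigma(N)$ is finite, and each is nonzero since $\alpha_k \in \sigma(N)$. The forward inclusion is immediate: for any unitary $U \in \mathcal{O}_2$, the elements $A_k^U := E_\mathfrak{D}(U^* P_k U)$ form a partition of unity in $\mathfrak{D}$ by the positivity and unitality of $E_\mathfrak{D}$, and one has $E_\mathfrak{D}(U^* N U) = \sum_k \alpha_k A_k^U$, so taking norm closures gives the desired containment.

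For the reverse inclusion, fix a partition of unity $\{A_k\}_{k=1}^n$ in $\mathfrak{D}$ and $\epsilon > 0$; since $\overline{\D_\mathfrak{D}(N)}^{\|\cdot\|}$ is closed, it suffices to show $\sum_k \alpha_k A_k \in \overline{\D_\mathfrak{D}(N)}^{\|\cdot\|}$. Applying Theorem \ref{thm:UHF-MVCT} to the infinite UHF algebra $\mathcal{F}_2$ and $\{A_k\}$ produces pairwise orthogonal \emph{nonzero} projections $\tilde{P}_1, \ldots, \tilde{P}_n \in \mathcal{F}_2$ with $\sum_k \tilde{P}_k = I$ and $\|E(\tilde{P}_k) - A_k\| < \epsilon$, where $E : \mathcal{F}_2 \to \mathfrak{D}$ is the diagonal expectation. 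Because $\tilde{P}_k \in \mathcal{F}_2$, the identity $E_\mathfrak{D} = E \circ E_{\mathcal{F}_2}$ gives $E_\mathfrak{D}(\tilde{P}_k) = E(\tilde{P}_k)$. The main step is then to unitarily conjugate $\{P_k\}$ onto $\{\tilde{P}_k\}$: since $\mathcal{O}_2$ is purely infinite and simple, any two nonzero projections are Murray--von Neumann equivalent, so for each $k$ there is a partial isometry $v_k \in \mathcal{O}_2$ with $v_k v_k^* = P_k$ and $v_k^* v_k = \tilde{P}_k$. The orthogonality relations $P_j P_k = \tilde{P}_j \tilde{P}_k = 0$ for $j \neq k$ force $v_j v_k^* = v_j^* v_k = 0$ for $j \neq k$, so $U := \sum_k v_k$ is a unitary in $\mathcal{O}_2$ satisfying $U \tilde{P}_k U^* = P_k$, and hence $U^* N U = \sum_k \alpha_k \tilde{P}_k$.

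The approximation is then immediate from
\[
\Bigl\| E_\mathfrak{D}(U^* N U) - \sum_{k=1}^n \alpha_k A_k \Bigr\| = \Bigl\| \sum_{k=1}^n \alpha_k \bigl( E(\tilde{P}_k) - A_k \bigr) \Bigr\| \leq \epsilon \sum_{k=1}^n |\alpha_k|,
\]
and letting $\epsilon \to 0$ places $\sum_k \alpha_k A_k$ in $\overline{\D_\mathfrak{D}(N)}^{\|\cdot\|}$, as required. The one nontrivial point is the existence of the conjugating unitary $U$, which relies crucially on the nonvanishing of the $\tilde{P}_k$ guaranteed by Theorem \ref{thm:UHF-MVCT}; the finite-dimensional failure of the multivariate Carpenter theorem (Example \ref{ex:no-finite-dim-multivariate-carpenter}) is precisely what blocks the analogous argument in matrix algebras.
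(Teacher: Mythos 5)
Your proof is correct and follows essentially the same route as the paper: apply Theorem \ref{thm:UHF-MVCT} in $\mathcal{F}_2$ to get nonzero orthogonal projections $\tilde P_k$ approximating the $A_k$, then conjugate the spectral projections of $N$ onto the $\tilde P_k$. The only difference is that where the paper says the resulting $\sum_k \alpha_k\tilde P_k$ is unitarily equivalent to $N$ "by $K$-theory," you spell out that argument explicitly by building the conjugating unitary $U=\sum_k v_k$ from Murray--von Neumann equivalences of nonzero projections (using $K_0(\mathcal{O}_2)=0$ together with pure infiniteness and simplicity); this is the same underlying fact, just unpacked.
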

\begin{proof}
If $A =E_\mathfrak{D}(U^*NU)$ for some $U \in \U(\mathcal{O}_2)$, then it is clear that
\[
A \in \left\{ \left. \sum^n_{k=1} \alpha_k A_k \, \right| \, \{A_k\}^n_{k=1} \text{ a partition of unity in } \mathfrak{D}\right\},
\] 
since the image of the spectral projections of $U^*NU$ under $E_\mathfrak{D}$ form a partition of unity of $\mathfrak{D}$. This gives one direction.

For the other direction, suppose that $A=\sum^n_{k=1} \alpha_k A_k$ for some partition of unity $\{A_k\}^n_{k=1} \subseteq \mathfrak{D}$. Theorem \ref{thm:UHF-MVCT} implies there exists non-zero orthogonal projections $\{P_k\}^n_{k=1} \subseteq \mathcal{F}_2 \subseteq \mathcal{O}_2$ such that
\[
\left\|E_\mathfrak{D}(P_k) - A_k\right\| < \epsilon.
\]
In particular,
\[
\left\|A - E_\mathfrak{D}\left(\sum^n_{k=1} \alpha_k P_k\right)\right\| < M\epsilon,
\]
where $M = \sum^n_{k=1} |\alpha_k|$.  The operator $\sum^n_{k=1} \alpha_k P_k$ is normal with the same spectrum as $N$. Hence it is unitarily equivalent to $N$ by $K$-theory.  The result now follows.
\end{proof}
\begin{cor}
Let $E_\mathfrak{D} : \mathcal{O}_2 \to \mathfrak{D}$ be the conditional expectation described above and let $S \in \mathcal{O}_2$ be a self-adjoint operator.  Then
\[
\overline{\D_\mathfrak{D}(S)}^{\left\|\cdot\right\|} = \{A \in \mathfrak{D} \, \mid \, \sigma(A) \subseteq \cconv(\sigma(S))\}.
\]
\end{cor}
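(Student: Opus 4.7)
The plan is to derive the corollary directly from Theorem \ref{thm:approx-carpenter-O2}, using the fact that for a self-adjoint operator $S$ the convex hull $\cconv(\sigma(S))$ is simply the real interval $[\min\sigma(S), \max\sigma(S)]$, so every point of $\sigma(A)$ is automatically a convex combination of any sufficiently fine sample of points in $\sigma(S)$.

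For the inclusion $\overline{\D_\mathfrak{D}(S)}^{\|\cdot\|} \subseteq \{A\in\mathfrak{D} \mid \sigma(A)\subseteq\cconv(\sigma(S))\}$, I would invoke Lemma \ref{lem:spectrum-of-expectation-in-convex-hull} applied to the unital positive map $E_\mathfrak{D} : \mathcal{O}_2 \to \mathfrak{D}$ and the normal (self-adjoint) operator $S$; this is essentially Corollary \ref{cor:approx-diagonals-have-spectrum-in-convex-hull} adapted to the C*-algebraic setting, which only requires positivity of $E_\mathfrak{D}$ and not that the ambient algebra be a von Neumann algebra.

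For the reverse inclusion, fix $A \in \mathfrak{D}$ self-adjoint with $\sigma(A) \subseteq [\min\sigma(S), \max\sigma(S)]$ and $\epsilon > 0$. First, approximate $S$ in norm to within $\epsilon$ by a self-adjoint $S_0 \in \mathcal{O}_2$ with finite spectrum $\sigma(S_0) = \{\alpha_1,\ldots,\alpha_n\} \subseteq \sigma(S)$ containing $\min\sigma(S)$ and $\max\sigma(S)$, and approximate $A$ in norm to within $\epsilon$ by a self-adjoint $A_0 \in \mathfrak{D}$ with finite spectrum $\sigma(A_0) = \{\lambda_1,\ldots,\lambda_m\} \subseteq \cconv(\sigma(S_0))$. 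Write $A_0 = \sum_{j=1}^m \lambda_j Q_j$ where the $Q_j$ are pairwise orthogonal spectral projections of $A_0$ in $\mathfrak{D}$ summing to $I$. Because each $\lambda_j$ lies in the real interval $\cconv(\{\alpha_k\})$, pick $\gamma_{j,k} \in [0,1]$ with $\sum_k \gamma_{j,k} = 1$ and $\lambda_j = \sum_k \gamma_{j,k}\alpha_k$. Setting $A_k := \sum_{j=1}^m \gamma_{j,k} Q_j \in \mathfrak{D}$ yields a partition of unity in $\mathfrak{D}$ with
\[
A_0 = \sum_{k=1}^n \alpha_k A_k.
\]

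Now Theorem \ref{thm:approx-carpenter-O2} applied to $S_0$ (whose spectrum is exactly $\{\alpha_k\}$) gives $A_0 \in \overline{\D_\mathfrak{D}(S_0)}^{\|\cdot\|}$. Finally, for any unitary $U \in \mathcal{O}_2$,
\[
\|E_\mathfrak{D}(U^*SU) - E_\mathfrak{D}(U^*S_0 U)\| \le \|S - S_0\| < \epsilon,
\]
so $\overline{\D_\mathfrak{D}(S_0)}^{\|\cdot\|}$ lies within Hausdorff distance $\epsilon$ of $\overline{\D_\mathfrak{D}(S)}^{\|\cdot\|}$. Combining with $\|A - A_0\| < \epsilon$, we conclude $A \in \overline{\D_\mathfrak{D}(S)}^{\|\cdot\|}$. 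There is no real obstacle here: the only place one might worry is whether the convex-combination step forces the $\gamma_{j,k}$ to depend delicately on $A$, but since we are working on the real line this is completely elementary, and the approximations to operators with finite spectrum in $\mathcal{O}_2$ and in $\mathfrak{D}$ are standard.
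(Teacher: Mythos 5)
Your proof is correct and takes essentially the same route as the paper's: reduce via real rank zero to finite-spectrum $S_0$ and $A_0$, express $A_0 = \sum_k \alpha_k A_k$ for a partition of unity built from convex coefficients, invoke Theorem \ref{thm:approx-carpenter-O2}, and conclude by norm perturbation. The one cosmetic difference is how you arrange the endpoints: you adjust $S_0$ so that $\cconv(\sigma(S_0)) \supseteq \sigma(A_0)$ (which may need a small extra step, since real rank zero alone does not hand you a finite-spectrum approximant whose spectrum sits inside $\sigma(S)$ and contains its min and max -- though a short functional-calculus argument fixes this), whereas the paper instead perturbs $A$ so that $\sigma(A_0)$ avoids $\min\sigma(S_0)$ and $\max\sigma(S_0)$, forcing the convex coefficients $\gamma_{j,k}$ to lie strictly in $(0,1)$. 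The paper does this so that the resulting $A_k$ are bounded away from zero, which guarantees the projections $P_k$ produced by the UHF Carpenter theorem are non-zero and hence that $\sum_k \alpha_k P_k$ has the full spectrum $\{\alpha_k\}$; your version sidesteps this by citing Theorem \ref{thm:approx-carpenter-O2} as a black box, whose statement already absorbs that care. Both are valid; the paper re-derives the estimate while you cite, and the paper's treatment of which operator to perturb is the more cautious of the two.
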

\begin{proof}
The inclusion 
\[
\overline{\D_\mathfrak{D}(S)}^{\left\|\cdot\right\|} \subseteq \{A \in \mathfrak{D} \, \mid \, \sigma(A) \subseteq \cconv(\sigma(S))\}
\]
follows from Corollary \ref{cor:approx-diagonals-have-spectrum-in-convex-hull}.

For the other inclusion, we may assume without loss of generality that $\sigma(S)$ is finite, since $\mathcal{O}_2$ has real rank zero.  Write $\sigma(S) = \{\alpha_k\}^n_{k=1}$ with $\alpha_1 < \alpha_2 < \cdots < \alpha_n$, and fix $A \in \mathfrak{D}$ such that $\sigma(A) \subseteq \cconv(\sigma(S))$.  Since $\O_2$ has real rank zero, we may further assume that $\sigma(A)$ is finite, and that $\alpha_1, \alpha_n \notin \sigma(A)$.  Write $\sigma(A) = \{\beta_j\}^m_{j=1}$ and $A = \sum^m_{j=1} \beta_j Q_j$, where $\{Q_j\}^m_{j=1}$ are spectral projections.  

Fix $\epsilon > 0$.  Since $\alpha_1 < \beta_j < \alpha_n$ for each $j$, there exists non-zero scalars $\{\gamma_{k,j}\}^n_{k=1} \subseteq (0,1)$ such that $\sum^n_{k=1} \gamma_{k,j} = 1$ and $\left|\sum^n_{k=1} \alpha_k \gamma_{k,j} - \beta_j\right| < \epsilon$. 

Let $A_k = \sum^m_{j=1} \gamma_{k,j} Q_j \in \mathfrak{D}$.  Then since $\{A_k\}^n_{k=1}$ is a partition of unity in $\mathfrak{D}$, by Theorem \ref{thm:approx-carpenter-O2} there exists non-zero pairwise orthogonal projections $\{P_k\}^n_{k=1} \subseteq \mathcal{F}_2 \subseteq \mathfrak{D}$ such that $\left\|E_\mathfrak{D}(P_k) - A_k\right\| < \epsilon$.  

Setting $S_0 = \sum^n_{k=1} \alpha_k P_k$ and $M = \sum^n_{k=1} |\alpha_k|$, we estimate
\[
\left\|E_\mathfrak{D}(S_0) - \sum^n_{k=1} \alpha_k A_k \right\| = \left\| \sum^n_{k=1} \alpha_k E_\mathfrak{D}(P_k) - \sum^n_{k=1} \alpha_k A_k\right\| \leq \epsilon M
\]
and
\[
\left\| A - \sum^n_{k=1} \alpha_k A_k\right\| = \left\| \sum^m_{j=1} \beta_j Q_j - \sum^m_{j=1} \sum^n_{k=1} \alpha_k \gamma_{k,j} Q_j\right\| < \epsilon
\]
Since $S_0$ and $S$ are unitarily equivalent in $\mathcal{O}_2$, the result now follows.
\end{proof}


\end{document}